\documentclass[12pt]{article}

 \usepackage[body={6.5in,9.0in},left=1.0in, top=1.0in,centering]{geometry}                

\usepackage{graphicx,subfigure,amsmath,amsthm}
\usepackage{amssymb,mathrsfs}
\usepackage{epstopdf}
\usepackage[usenames, dvipsnames]{color}
\usepackage{pstricks}
\DeclareGraphicsRule{.tif}{png}{.png}{`convert #1 `dirname #1`/`basename #1 .tif`.png}
\usepackage[letterpaper=true,colorlinks=true,urlcolor=blue,citecolor=blue,linkcolor=blue,pdfstartview=FitH]{hyperref}

\usepackage{lipsum}

\usepackage[sort,comma]{natbib}
\bibliographystyle{apalike}

\allowdisplaybreaks
\makeatletter \@addtoreset{equation}{section}

\makeatletter
\newcommand{\pushright}[1]{\ifmeasuring@#1\else\omit\hfill$\displaystyle#1$\fi\ignorespaces}
\newcommand{\pushleft}[1]{\ifmeasuring@#1\else\omit$\displaystyle#1$\hfill\fi\ignorespaces}
\makeatother

\def\F{{\mathcal F}}
\def\G{\mathcal G}
 
\def\ss{{\mathbb  S}}
\def\lan{\big\langle}
\def\ran{\big\rangle}
\newcommand{\LL}{\mathcal L}
\newcommand{\R}{\mathbb R}

\newcommand{\B}{\mathfrak B}
\renewcommand{\d}{\mathrm{d}}

\newcommand{\la}{\lambda}

\newcommand{\e}{\varepsilon}

 \newcommand{\lbar }{\overline}

\newcommand{\sg}{\sigma}
\newcommand{\sgla}{\sigma_{\lambda_{0}}}
 \newcommand{\tr}{\text {tr}}
 \newcommand{\var}{\text{Var}}
\renewcommand{\P}{\mathbb P}
\newcommand{\Q}{{\mathbb Q}}

\newcommand{\E}{{\mathbb E}}

\newcommand{\N}{\mathbb N}

\newcommand{\wdt}{\widetilde}
\newcommand{\wdh}{\widehat}

\newcommand{\cadlag}{c\`adl\`ag }

\newtheorem{thm}{Theorem}[section]
\newtheorem{prop}[thm]{Proposition}
\newtheorem{lem}[thm]{Lemma}
\newtheorem{cor}[thm]{Corollary}

\theoremstyle{definition}
\newtheorem{Assumption}[thm]{Assumption}
\newtheorem{rem}[thm]{Remark}

\newtheorem{exm}[thm]{Example}

\newcommand{\set}[1]{\left\{#1\right\}}

\newcommand{\norm}[1]{\left\| #1\right\|}
\def\fubao#1 {\fbox {\footnote {\ }}\ \footnotetext {From Fubao: #1}}
\def\chao#1 {\fbox {\footnote {\ }}\ \footnotetext {From Chao: #1}}

\parskip=2.18pt

\title{\Large Jump Type Stochastic Differential Equations with Non-Lipschitz Coefficients: Non Confluence,  Feller and Strong Feller Properties, and Exponential Ergodicity}

\author{Fubao Xi\thanks{School of Mathematics and Statistics, Beijing Institute of Technology, Beijing 100081, China,
xifb@bit.edu.cn.}
\and Chao Zhu\thanks{Department of Mathematical Sciences, University of Wisconsin-Milwaukee, Milwaukee, WI 53201, USA, zhu@uwm.edu.}}

\begin{document}
\maketitle

\begin{abstract}
This paper considers multidimensional jump type stochastic differential equations with super linear  and  non-Lipschitz coefficients. After establishing a sufficient condition for nonexplosion, this paper  presents sufficient local non-Lipschitz conditions for pathwise uniqueness. The non confluence property for solutions is investigated.  Feller and strong Feller properties under local non-Lipschitz conditions are investigated via the coupling method. Sufficient conditions for irreducibility and exponential ergodicity are derived. As applications, this paper also studies multidimensional stochastic differential equations driven by L\'evy processes and presents a Feynman-Kac formula for L\'evy type operators.


\bigskip

\noindent{\bf Key Words and Phrases.} 
Pathwise uniqueness,
non-explosion, non confluence,  Feller and  strong Feller properties,  irreducibility, exponential ergodicity,   L\'{e}vy type operator,
Feynman-Kac formula.

\bigskip

\noindent{\bf Running Title.} Stochastic Differential Equations with Non-Lipschitz Coefficients
\bigskip

\noindent{\bf 2000 MR Subject Classification.} 60J25, 60J27, 60J60,
60J75.
\end{abstract}

\section{Introduction}\label{sect-introduction}

Let  $(U, \mathfrak U)$  be a measurable space and $\nu$ a $\sigma$-finite measure on $U$.
 Let  $d \ge 2 $ be a positive integer,  $b: \R^{d}\mapsto \R^{d}$, $\sigma:\R^{d}\mapsto \R^{d\times d}$ and $c: \R^{d}\times U \mapsto \R^{d}$ be Borel measurable functions.
Consider the following stochastic differential equation (SDE)
\begin{equation}
\label{eq-sde-sec-2}
\d X(t) = b(X(t)) \d t + \sigma(X(t))\d W(t) + \int_{U} c(X(t-),u)\wdt N(\d t, \d u),
\end{equation}
where  $W$ is a standard $d$-dimensional Brownian motion, and $N$ is a Poisson random measure on $[0,\infty)\times U$ with intensity $\d t \,\nu(\d u)$ and compensated Poisson random measure $\wdt N$.   It is well-known that if the coefficients $b,\sigma$ and $c$ of \eqref{eq-sde-sec-2} satisfy the linear growth and local Lipschitz conditions, then \eqref{eq-sde-sec-2} admits a non-exploding strong solution and the solution is pathwise unique; see, for example, \cite[Theorem IV.9.1]{IkedaW-89} for details.

The linear growth condition is a standard assumption in the literature; it   guarantees that
 the solution $X$  to   \eqref{eq-sde-sec-2} does not {\em explode} in finite time with probability one. But such a condition is often too restrictive in practice. For example, in many mathematical ecological  models (such as those in \cite{K-K-01,Mao-M-R-02,ZY-09a}),  the coefficients do not satisfy the linear growth condition; yet non-explosion is still guaranteed thanks to  the special structures of the underlying SDEs in these papers.  For general multidimensional SDEs without jumps, the relaxation of linear growth condition can be found in \cite{FangZ-05} and \cite{LanWu-14}.
 For   {\em   jump type} SDEs, can we relax   the usual linear growth condition as well? In this paper,  we provide   a sufficient condition in Theorem \ref{thm-no-explosion-sec-2} for non-explosion for solutions to  \eqref{eq-sde-sec-2} when the coefficients have super linear growth in a neighborhood of $\infty$.

Concerning the pathwise uniqueness, the usual argument is to use the (local) Lipschitz condition and Gronwall's inequality   to demonstrate that the $L_{2}$ distance $\E[|\wdt X(t) - X(t)|^{2}] $ between  two solutions $\wdt X, X$  vanishes if they have the same initial condition; see, for example, the proof of \citet[Theorem IV.9.1]{IkedaW-89}.
The   paper \cite{YamaW-71} relaxes the local Lipschitz condition to H\"older condition for one-dimensional SDEs without jumps. Since then, the problem of existence and pathwise uniqueness of solutions to SDEs with non-Lipschitz conditions has attracted growing attention. To name just a few, \cite{Bass-03} presents a sharp condition for existence and pathwise uniqueness for a one-dimensional SDE with a symmetric stable driving noise;   \cite{FuLi-10} and \cite{LiMyt-11} provide sufficient conditions for existence and pathwise uniqueness for one-dimensional  jump type SDEs with non-Lipschitz conditions; a crucial assumption in these two papers is that the kernel for the compensated Poisson integral term is  nondecreasing.  Such a nondecreasing kernel assumption was weakened in \cite{Four-13} and \cite{LiPu-12}. It should be noted that pathwise uniqueness need not hold in general if the diffusion matrix is merely uniformly nondegenerate,  bounded and continuous even in the one-dimensional case; see \cite{BassBC-04} for  such an example of one-dimensional SDE driven by a symmetric stable process in which pathwise uniqueness fails.   See also the discussion in \cite{TanakaTW-74}, in which pathwise uniqueness fails for some one-dimensional SDEs driven by   symmetric L\'evy   processes with H\"older continuous drift coefficients.  All the aforementioned references focus on one-dimensional SDEs and less is known for the multi-dimensional case. \cite{FangZ-05} establishes sufficient non-Lipschitz conditions for pathwise uniqueness  for multidimensional SDEs without jumps. These conditions were further relaxed in \cite{LanWu-14} using Euler's approximation method.
Further studies on jump type SDEs  with non-Lipschitz coefficients can be found in \cite{QiaoZ-08,Qiao-14,Priola-12,Priola-15}, among others.

This paper  aims to establish sufficient  non-Lipschitz conditions for pathwise uniqueness  for {\em multidimensional SDEs  with jumps}. Two sets of  sufficient non-Lipschitz  conditions (Assumptions \ref{assumption1-non-lip-sec-2} and \ref{assume-non-lip-sec-2}) for pathwise uniqueness are provided; both of them only require  the modulus of continuity  of the coefficients of  \eqref{eq-sde-sec-2} to hold  {\em locally} in a small neighborhood of the diagonal line $x=y$ on $\R^{d}\otimes\R^{d}$.
  As commented in  \cite{FangZ-05}, without Lipschitz condition, the usual argument for pathwise uniqueness  is not applicable.    When Assumption \ref{assumption1-non-lip-sec-2} holds, we  follow Yamada and Watanabe's  idea  and  construct a sequence of smooth functions to control the   $L_{1}$ distance 
  of two solutions $\wdt X, X$ up to an appropriately defined stopping time. Next we use a Bihari's inequality type argument to show that such an $L_{1}$ distance vanishes  if the two solutions start from the same initial conditions.
    Then we argue that
   $\wdt X(t)= X(t)$ a.s. for any $t \ge 0$,   which, in turn, leads to the desired pathwise uniqueness. The details are spelled out in Theorem \ref{thm1-path-uniqueness-sec-2}.
When Assumption \ref{assume-non-lip-sec-2} is in force, we develop a   quite different and more direct proof in Theorem \ref{thm-path-uniqueness-sec-2}.  In lieu of a sequence of smooth functions, a single smooth function is used to estimate, roughly speaking,   a ``scaled''  $L_{2}$ distance of two solutions to     \eqref{eq-sde-sec-2}, which helps us to immediately obtain  $\wdt X(t)= X(t)$ a.s. Example \ref{example1} is provided to demonstrate the utility of our results.

Now suppose  \eqref{eq-sde-sec-2} has a unique non-exploding  strong solution for any initial condition.
We say that the solution $X$ of \eqref{eq-sde-sec-2} satisfies the {\em non confluence} property, if for all $x\neq y\in \R^{d}$,
\begin{displaymath}
\P\{X^{x}(t) \neq X^{y}(t), \text{ for all } t \ge 0 \} =1,
\end{displaymath} where $X^{x}$ and $X^{y}$ denote solutions to \eqref{eq-sde-sec-2} with initial conditions $x$ and $y$, respectively.    We refer to \cite{FangZ-05} and \cite{LanWu-14} for sufficient conditions for non confluence for SDEs without jumps.  The recent paper \cite{Dong-18} contains some sufficient conditions for non confluence for jump SDEs. The key assumption in \cite{Dong-18} is on the jumps: for each $u \in U$, the function $  x\mapsto x+ c(x,u)$ is homeomorphic and that its inverse satisfies the linear growth and Lipschitz conditions. Such conditions are quite strong and not easy to verify in practice.  We aim to relax such conditions in this paper.  First,  as long as the function  $x\mapsto x+ c(x,u)$ is one-to-one for $\nu$-almost all $u\in U$, Theorem \ref{thm-non confluence} proposes a set of sufficient conditions  in terms of the existence of a certain Lyapunov function for non confluence for   \eqref{eq-sde-sec-2}. Then in Corollary \ref{cor-non confluence}, we prove that under a  slightly stronger condition on  the function  $x\mapsto x+ c(x,u)$, the non confluence property holds if the coefficients of \eqref{eq-sde-sec-2} is Lipschitz continuous. Remark \ref{rem-35 about jump condition} demonstrates that our condition is quite easy to verify in general.

This paper next considers Feller and strong Feller properties for solutions to \eqref{eq-sde-sec-2} under non-Lipschitz conditions. Suppose \eqref{eq-sde-sec-2} has a  solution $X$ which is unique in the sense of probability law. For $f\in \B_{b}(\R^{d})$ (the set of bounded and measurable functions), set \begin{equation}
\label{eq-semi-group}
P_{t} f(x) : = \E_{x} [f(X(t))] = \E[f(X^{x}(t))], \quad t \ge 0, x\in \R^{d}.
\end{equation} The family of operators $\{ P_{t}\}_{t \ge0}$ forms a semigroup of bounded linear operators on $ \B_{b}(\R^{d})$.  We are interested in the continuous properties of the semigroup. The semigroup or the corresponding process is said to be {\em Feller} if $P_{t} $ maps $C_{b}(\R^{d})$ (the set of bounded and continuous functions) into itself and {\em strong Feller} if it maps $ \B_{b}(\R^{d})$ into $ C_{b}(\R^{d})$ for each $t >0$.  Most work on Feller and strong Feller properties assumes (local) Lipschitz conditions on the coefficients of the underlying processes; see, for example, Theorem 6.3.4 of \cite{Stroock-V} for diffusion processes,   Proposition 2.1 of \cite{Wang-10} for jump diffusions and Theorems 4.5 and 5.6 of \cite{Xi-09} for regime-switching jump diffusions. By contrast, this paper establishes these properties under non-Lipschitz conditions. Proposition \ref{prop-Feller-old} and  Theorem \ref{thm-feller new}  deal with Feller property while Theorem  \ref{thm-strong-Feller} and Proposition \ref{prop-str-Feller-local version} establish  strong Feller property. In these results, we only require certain {\em local} modulus of continuity  of the coefficients of  \eqref{eq-sde-sec-2}   in a small neighborhood of the diagonal line.  These results   improve substantially over the related work in the literature, even for SDEs without jumps. See Remark \ref{rem-about str-Feller conditions} for more details.
Our main tool in establishing these two theorems is the coupling method, 
which has been extensively applied  in the literature to study various properties of many processes,  see, for example, \cite{ChenLi-89,Lindvall,PriolaW-06,Wang-10} and the references therein.  

Next we take up the issue of exponential ergodicity for the process $X$ of \eqref{eq-sde-sec-2}. Following  the same approach as those in \cite{Zhang-09,PriolaSXZ-12,Qiao-14}, we first show that the process $X$ of \eqref{eq-sde-sec-2} is irreducible under Assumptions \ref{assumption-non-linear-growth-sec-2} and \ref{assume-non-lip-sec-2}. The conditions for irreducibility in \cite{Qiao-14} are somewhat relaxed here; see Remark \ref{rem-irreducibilty} for more details.  The irreducibility and strong Feller property together then imply the uniqueness of an invariant measure for the process $X$. A Foster-Lyapunov type drift condition then leads to the existence of an invariant measure  as well as the exponential ergodicity. The details are spelled out in Theorem \ref{thm-exp-ergodicity}.

 As applications, we  consider
 SDEs driven by multidimensional L\'evy processes $\d X(t) =\psi(X(t-)) \d L(t), $ in which $\psi: \R^{d}\mapsto \R^{d\times d}$ is Borel measurable and non-Lipschitz, and $L$ is a   multidimensional L\'evy process, e.g., a symmetric stable process of order $\alpha$ with $\alpha\in (0,2)$. Under what conditions on $\psi$ so that   this SDE has a unique non-exploding strong solution? We aim to answer this question in Section \ref{sect-Levy sde}.
  For another application, we consider a  Cauchy problem related to a L\'evy type operator \eqref{eq-Levy type operator}. Our goal is to establish a non-standard  Feynman-Kac formula for solutions to the Cauchy problem and therefore establish a connection between integral-differential equations and SDEs of the form \eqref{eq-sde-sec-2}. The details are spelled out in Section  \ref{sect-Feynman-Kac}.

 Upon the completion of the manuscript, we learned that the  recent paper \cite{Dong-18} also contains sufficient conditions for non-explosion, pathwise uniqueness and non confluence for jump type SDEs. These conditions are quite different from our corresponding conditions and they do not seem to imply one another.  In addition, the methodologies in \cite{Dong-18} and this paper have different flavors, even though certain technical aspects are similar.

 The rest of the paper is organized as follows. Section \ref{sect-non-explosion-pathwise} presents sufficient conditions for non-explosion and pathwise uniqueness. The non confluence property for solutions to \eqref{eq-sde-sec-2} is investigated in Section \ref{sect-non confluence}.  Section \ref{sect-Feller} is focused on Feller property under non-Lipschitz condition. Strong Feller property is treated in Section \ref{sect-strong-Feller}. Section \ref{sect: irreducibility and ergodicity} studies irreducibility and exponential ergodicity.  Finally Section \ref{sect-applications}    studies   SDEs driven by multidimensional L\'evy processes and establishes a Feynman-Kac formula for L\'evy type operators. Several technical proofs are arranged in Appendix \ref{sect-appendix}.

To facilitate the
 presentation, we introduce some notation
 that will be used often in later sections.
   Throughout the paper, we use
 $\lan x, y\ran$  or $x\cdot y$ interchangeably to denote the inner product of
   the vectors $x$ and $y$ with compatible dimensions. If
   $A$ is a vector or matrix, let $A^{T}$ denote the transpose of $A$ and set $|A|:=\sqrt{\tr(AA^{T})}$.
     For a sufficiently smooth function $\phi: \R^d \to \R$, $D_{x_i} \phi= \frac{\partial \phi}{\partial x_i}$,
    $D_{x_ix_j} \phi= \frac{\partial^2 \phi}{\partial x_i\partial x_j}$, and we denote by $D\phi   =(D_{x_1}\phi, \dots, D_{x_d}\phi)^{T}\in \R^{d}$ and $D^2\phi =(D_{x_ix_j}\phi) \in \R^{d\times d}$ the gradient and  Hessian matrix of $\phi$, respectively.
    For $k \in \mathbb N$, $C^{k}(\R^{d})$ is the collection of functions $f: \R^{d}\mapsto \R$
    with continuous partial derivatives up to the $k$th order while  $C^{k}_{c} (\R^{d})$ denotes the space of $C^{k}$ functions with compact support.
 If $B $ is a set, we use   $I_B$ to denote the   indicator function of $B$. Throughout the paper, we adopt the conventions that $\sup \emptyset =-\infty$
 and $\inf \emptyset = + \infty$. Finally, we note that the infinitesimal generator $\LL$ of \eqref{eq-sde-sec-2} is given by \begin{equation}
\label{eq-L-operator-sect-2}\begin{aligned}
\LL f(x) : =& \  \lan Df(x), b(x) \ran + \frac12\tr\bigl(\sigma(x)\sigma(x)^{T}D^{2} f(x)\bigr) \\
    & + \int_{U} \bigl[ f(x+ c(x,u)) -f(x) - \lan Df(x), c(x,u) \ran \bigr]\nu(\d u), \ \ f\in C_{c}^{2}(\R^{d}). \end{aligned}
\end{equation}

\section{Nonexplosion and Pathwise Uniqueness}\label{sect-non-explosion-pathwise}

In this section, we consider nonexplosion and pathwise uniqueness for   SDE  \eqref{eq-sde-sec-2}.
Assume throughout this paper that  the functions
$b(\cdot)$,   $\sigma(\cdot)$, and $c(\cdot,u)$ (for each $u \in U$) are continuous
 and that $c(\cdot,\cdot)$  is Borel measurable such that   the function $x\mapsto \int_{U}|c(x,u)|^{2}\nu(\d u)$ is continuous. 
 For the convenience of later presentations, let us recall several important notions from \cite{IkedaW-89} (as well as the presentations in \cite{Situ-05}).
Let  $(\Omega, \F, \{\F_{t}\}_{t \ge 0}, \P)$ be a filtered probability space satisfying the usual hypotheses. Let $W  = \{W(t), t \ge 0\}$  be a standard $d$-dimensional $\{\F_{t}\}$-Brownian motion and let $p=\{p(t), t \ge 0\}$   be an  $\{\F_{t}\}$-Poisson point processes on $U$  with characteristic measures $\nu(\d u)$, where as mentioned in the introduction,   $(U, \mathfrak U)$  is a measurable space and $\nu$ a $\sigma$-finite measure on $U$. Suppose that  $W$ and $p$ are independent. Let $N (\d s, \d u)$   be the Poisson random measures associated with $p$ and let $\widetilde N(\d s, \d u)$ be the compensated Poisson random measure of $N (\d s, \d u)$.    By a {\em weak solution up to an explosion time} to \eqref{eq-sde-sec-2}, we mean an $\R^{d}$-valued  \cadlag and $\{\F_{t}\}$-adapted  process $X = \{X(t), t \ge 0\}$  such that the equation  \begin{align*}
X(t\wedge\tau_{n})  = X(0) &+ \int_{0}^{t\wedge \tau_{n}}\!b(X(s))  \d s + \int_{0}^{t\wedge \tau_{n}}\!\!\sigma(X(s))  \d W(s)      + \int_{0}^{t\wedge \tau_{n}} \!\!\!\! \int_{U} c(X(s-),u)  \wdt N(\d s, \d u)
\end{align*}  holds  for all $n \in \mathbb N$ and  $t \ge 0$ a.s., where the {\em initial condition} $X(0) \in \F_{0}$ and $\tau_{n}:= \inf\{t \ge 0: |X(t)| > n\} $ is the first exit time from the closed ball $B(n):=\{x\in\R^d: |x| \le n\}$.  Clearly the sequence $\{\tau_{n}, n \in \mathbb N\}$ is nondecreasing. The limit  $\tau: = \lim_{n\to \infty} \tau_{n}$, finite or infinite,  is called the  {\em explosion time} or  {\em lifetime}  for the process $X$.  In particular, we say that $X$ is   {\em explosive} if $\P\{ \tau  < \infty\} > 0$; otherwise, $X$ is said to be {\em non-explosive}.  We say {\em pathwise uniqueness}  holds for \eqref{eq-sde-sec-2} if for any two solutions $X_{1}, X_{2}$
 of the equation satisfying  $\P\{ X_{1}(0) = X_{2}(0)\} =1$ we have  $\P\{X_{1}(t) = X_{2}(t) \text{ for all  } t \ge 0\} =1$.  Let $\{\G_{t}\}_{t \ge 0}$ be the augmented natural filtration generated by $W$ and $p$. A solution $X$ of \eqref{eq-sde-sec-2} is called a {\em strong solution} if it is adapted with respect to $\{\G_{t}\}_{t \ge 0}$.

The classical results (e.g., \cite{IkedaW-89}) indicate that if the coefficients  satisfy the usual linear growth condition, then the solution to \eqref{eq-sde-sec-2}   is non-explosive. This section aims to relax the linear growth condition.
\begin{Assumption}\label{assumption-non-linear-growth-sec-2}
There exists a  nondecreasing  function $\zeta:[0,\infty) \mapsto [1,\infty)$  
 that is continuously differentiable and  satisfies
\begin{equation}\label{eq-zeta-sec-2}
 \int_{0}^{\infty} \frac{\d r}{ r\zeta(r) + 1} =\infty,
\end{equation}
such that for all 
$x\in \R^{d}$,
\begin{align}
\label{eq-coeffs-non-linear-growth-sec-2}
& 2 \lan x, b(x)\ran + |\sigma(x)|^{2} + \int_{U}  |c(x,u)|^{2} \nu(\d u)  \le \kappa [|x|^{2}\zeta(|x|^{2}) + 1],
\end{align} where $\kappa $ is a positive constant.
\end{Assumption}

Some common functions satisfying \eqref{eq-zeta-sec-2} include $\zeta(r) =1$,   $\zeta(r) = \log r$ and $  \zeta(r) = \log r \log(\log r) $ for $r$ large.
\begin{thm}\label{thm-no-explosion-sec-2}
Under Assumption \ref{assumption-non-linear-growth-sec-2}, any solution to \eqref{eq-sde-sec-2}  is non-explosive.
\end{thm}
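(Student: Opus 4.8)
The plan is to build a Khasminskii-type Lyapunov function and then invoke the non-explosion criterion of \cite{MeynT-93III}. Define
\[
h(s):=\int_{0}^{s}\frac{\d r}{r\zeta(r)+1},\qquad s\ge0,\qquad\text{and}\qquad V(x):=h(|x|^{2}),\quad x\in\R^{d}.
\]
Since $\zeta\ge1$, the integrand is bounded by $1$, so $h$ is finite, nonnegative and increasing; since $\zeta$ is continuously differentiable and $r\zeta(r)+1\ge1$, we get $h\in C^{2}([0,\infty))$ with $h'(s)=(s\zeta(s)+1)^{-1}\in(0,1]$, and $h'$ is nonincreasing because $r\mapsto r\zeta(r)+1$ is nondecreasing, so $h$ is concave. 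Hence $V\in C^{2}(\R^{d})$, $V\ge0$, and \eqref{eq-zeta-sec-2} forces $h(s)\uparrow\infty$, so that $V$ is norm-like: $V(x)\to\infty$ as $|x|\to\infty$.

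Next I would estimate $\LL V$. From $DV(x)=2h'(|x|^{2})x$ and $D^{2}V(x)=2h'(|x|^{2})I+4h''(|x|^{2})xx^{T}$, the drift-plus-diffusion part of $\LL V(x)$ equals
\[
2h'(|x|^{2})\langle x,b(x)\rangle+h'(|x|^{2})|\sigma(x)|^{2}+2h''(|x|^{2})|\sigma(x)^{T}x|^{2},
\]
whose last term is $\le0$ by concavity of $h$. For the jump part, concavity of $h$ and $|x+c(x,u)|^{2}-|x|^{2}=2\langle x,c(x,u)\rangle+|c(x,u)|^{2}$ give the pointwise bound
\[
h(|x+c(x,u)|^{2})-h(|x|^{2})-\langle DV(x),c(x,u)\rangle\le h'(|x|^{2})|c(x,u)|^{2},
\]
which is $\nu$-integrable because $x\mapsto\int_{U}|c(x,u)|^{2}\nu(\d u)$ is finite by the standing assumption. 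Combining these with Assumption~\ref{assumption-non-linear-growth-sec-2} and the identity $h'(s)(s\zeta(s)+1)=1$ yields
\[
\LL V(x)\le h'(|x|^{2})\Bigl[2\langle x,b(x)\rangle+|\sigma(x)|^{2}+\int_{U}|c(x,u)|^{2}\nu(\d u)\Bigr]\le\kappa\,h'(|x|^{2})\bigl[|x|^{2}\zeta(|x|^{2})+1\bigr]\le\kappa
\]
for all $x\in\R^{d}$.

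Finally, writing $\tau_{n}:=\inf\{t\ge0:|X(t)|\ge n\}$ and letting $\tau_{\infty}=\lim_{n\to\infty}\tau_{n}$ be the explosion time, a standard localization argument gives $\E_{x}[V(X(t\wedge\tau_{n}))]\le V(x)+\kappa t$ (apply Dynkin's formula to $V$ on $[0,t\wedge\tau_{n}]$, where the Brownian and compensated-jump stochastic integrals are genuine martingales); since $V\ge0$ and $V(X(t\wedge\tau_{n}))\ge h(n^{2})$ on $\{\tau_{n}\le t\}$, we obtain $\P_{x}\{\tau_{n}\le t\}\le(V(x)+\kappa t)/h(n^{2})\to0$ as $n\to\infty$, hence $\P_{x}\{\tau_{\infty}\le t\}=0$ for every $t$ and every $x$. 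Equivalently, the conclusion is immediate from the Foster--Lyapunov non-explosion criterion of \cite{MeynT-93III} applied to the norm-like $V$ with $\LL V\le\kappa$. I expect the main technical obstacle to be the justification of Dynkin's formula for the non-compactly-supported $V$ in the presence of jumps, namely checking that $\LL V(x)$ is well defined and finite at each $x$ and that the compensated jump integral up to $\tau_{n}$ is a true (not merely local) martingale; both rest on the concavity estimate above together with $\int_{\{|c|>1\}}|c(x,u)|\,\nu(\d u)\le\int_{U}|c(x,u)|^{2}\nu(\d u)<\infty$.
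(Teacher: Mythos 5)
Your proof is correct and follows essentially the same approach as the paper: both construct a concave function of $|x|^{2}$, exploit concavity to bound the jump term by $\int_U |c(x,u)|^2\nu(\d u)$ times the derivative, and then invoke the Foster--Lyapunov non-explosion criterion of \cite{MeynT-93III}. The only difference is cosmetic: the paper sets $\Phi(x)=\exp\{h(|x|^2)\}$ and obtains $\LL\Phi\le\kappa\Phi$, whereas you take $V(x)=h(|x|^2)$ directly and obtain the simpler bound $\LL V\le\kappa$; both are norm-like and both satisfy the same criterion.
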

\begin{proof} This proof is motivated by the proof of Theorem A in \cite{FangZ-05}.
Consider the function $\phi(r) : = \exp\{\int_{0}^{r} \frac{\d z}{z\zeta(z)  +1} \}$ for $r> 0$.  Then we have $$\phi'(r) =  \frac{\phi(r)}{r\zeta(r) +1}> 0, \text{ and } \phi''(r) =  \phi(r) \frac{1-\zeta(r) - r \zeta'(r)}{(r\zeta(r) + 1)^{2}}. $$ Since $\zeta(r) \ge 1$ and $\zeta$ is nondecreasing, it follows that $\phi''(r) \le 0$ and hence $\phi$ is a concave function.
On the other hand, thanks to 
\eqref{eq-zeta-sec-2}, we have $\phi(r) \to\infty$ as $r\to \infty.$

Now consider the function $\Phi: \R^{d}\mapsto \R^{+}$  defined by  $\Phi(x)=  \phi(|x|^{2})$. We have  $\Phi(x) \to \infty$ as $|x|\to \infty$. Moreover,   straightforward computations lead to
$D\Phi(x) = 2\phi'(|x|^{2}) x$  and  $D^{2} \Phi(x) = 2\phi'(|x|^{2})  I +  4\phi''(|x|^{2})  xx^{T}.$
Since $\phi$ is concave, we have $\phi(r) \le \phi(r_{0}) + \phi'(r_{0}) (r-r_{0})$ for all $r, r_{0} \in (0,\infty)$. Using this inequality with $r_{0}= |x|^{2}$ and $r = |x +  c(x,u)|^{2}$, we have  \begin{displaymath}
\phi(|x+ c(x,u)|^{2}) - \phi(|x|^{2})   \le \phi'(|x|^{2}) [|x+ c(x,u)|^{2} - |x|^{2}] = \phi'(|x|^{2})  [2 \lan x, c(x,u)\ran + |c(x,u)|^{2}].
\end{displaymath} Then it follows that
\begin{align*}
    \int_{U}& [\Phi(x+ c(x,u)) - \Phi(x) - \lan D\Phi(x), c(x,u)\ran ]\nu(\d u) \\
    & =    \int_{U}[ \phi(|x+ c(x,u)|^{2}) - \phi(|x|^{2})   - 2 \phi'(|x|^{2})  \lan x, c(x,u)\ran ]\nu (\d u) \\
 & \le    \int_{U} \bigl[ \phi'(|x|^{2})  [2 \lan x, c(x,u)\ran + |c(x,u)|^{2}] -2 \phi'(|x|^{2})  \lan x, c(x,u)\ran \bigr]\nu(\d u)\\
 & =  \int_{U} \phi'(|x|^{2})  |c(x,u)|^{2}\nu(\d u).
 \end{align*}
Consequently we can compute\begin{align*}
 \LL \Phi(x) & = 2 \phi'(|x|^{2})  \lan x, b(x)\ran +  \frac12 \tr\Bigl(\sigma(x)\sigma'(x)\bigl [2\phi'(|x|^{2})  I +  4\phi''(|x|^{2})  xx^{T}\bigr]\Bigr)\\
  & \qquad  + \int_{U}[\Phi(x+ c(x,u)) - \Phi(x) - \lan D\Phi(x), c(x,u)\ran ]\nu (\d u)\\
 & \le  \phi'(|x|^{2})\bigg(2 \lan x, b(x)\ran + |\sigma(x)|^{2} + \int_{U} |c(x,u)|^{2} \nu(\d u)\bigg) +2 \phi''(|x|) |\lan x, \sigma(x)\ran|^{2}      \\
& \le \frac{\phi(|x|^{2})}{|x|^{2} \zeta(|x|^{2}) + 1} \kappa (|x|^{2} \zeta(|x|^{2}) + 1)
  \le \kappa \phi(|x|^{2})  =  \kappa \Phi(x),
\end{align*} where we used \eqref{eq-coeffs-non-linear-growth-sec-2} and the fact that $\phi''(r) \le 0$ to derive the  second inequality.
  The rest of the proof is quite standard: one can apply It\^o's formula and the optional sampling theorem to the process $\{e^{-\kappa t} \Phi(X(t)), t \ge 0\}$ to argue that $\P\{ \lim_{n\to\infty} \tau_{n} =\infty\} =1$. Indeed similar arguments can be found in, e.g.,  the proofs of Theorem 2.1 of \cite{MeynT-93III}, Theorem A of \cite{FangZ-05}, and Theorem 2.1 of \cite{Dong-18}. We shall omit the details here.
\end{proof}



The rest of the section is focused on sufficient conditions for pathwise uniqueness for the stochastic differential equation \eqref{eq-sde-sec-2}. Let us first make the following assumption:
\begin{Assumption}\label{assumption1-non-lip-sec-2}
There exist a positive constant $\delta_{0}$ and a nondecreasing and concave function $\rho: [0,\infty)\mapsto [0,\infty)$ satisfying  $\rho(r) > 0$  for $ r > 0$,
  and  \begin{equation}
\label{eq-rho-sec-2}\int_{0+}\frac{ \d r}{ \rho(r)} = \infty,
\end{equation}
such that for all  $R > 0$ and $x,z\in \R^{d}$ with $|x| \vee |z| \le R$ and $|x-z| \le\delta_{0}$,
 \begin{align}
\label{eq-coeffs-non-lip-sec-2}
&2 \lan z-x, b(z)-b(x)\ran + |\sigma(z) -\sigma(x)|^{2} \le \kappa_{R} |z- x| \rho(|z-x|), \\
\label{eq-integral-term-non-lip-sec-2}& \int_{U} |c(z,u)-c(x,u)| \nu(\d u) \le \kappa_{R} \rho(|z-x|),
\end{align} where $\kappa_{R} $ is a positive constant. In addition, assume $\int_{U}|c(0,u)|\nu(\d u)<\infty$.
\end{Assumption}

\begin{thm}\label{thm1-path-uniqueness-sec-2}
Under Assumptions  \ref{assumption-non-linear-growth-sec-2} and
\ref{assumption1-non-lip-sec-2}, pathwise uniqueness holds for \eqref{eq-sde-sec-2}. 
\end{thm}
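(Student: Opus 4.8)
The plan is to adapt the Yamada--Watanabe device to the present multidimensional jump setting, trading the usual $L^{2}$ estimate for an $L^{1}$ estimate that is closed off by Bihari's inequality. Fix two solutions $X,\wdt X$ of \eqref{eq-sde-sec-2} on a common stochastic basis, driven by the same $W$ and the same $N$, with $X(0)=\wdt X(0)$, and put $Y:=\wdt X-X$. By \thmref{thm-no-explosion-sec-2} neither solution explodes, so $\tau_{R}:=\inf\{t\ge0:|X(t)|\vee|\wdt X(t)|\ge R\}\uparrow\infty$ a.s. I would additionally set $S_{\delta}:=\inf\{t\ge0:|Y(t)|\ge\delta_{0}\}$ and $\vartheta_{R}:=\tau_{R}\wedge S_{\delta}$, and run all estimates on $[0,\vartheta_{R}]$; this is precisely the regime ($|X|\vee|\wdt X|\le R$ and $|Y|\le\delta_{0}$ along the path) in which the \emph{local} conditions \eqref{eq-coeffs-non-lip-sec-2}--\eqref{eq-integral-term-non-lip-sec-2} apply.

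Using \eqref{eq-rho-sec-2}, choose $1\ge a_{0}>a_{1}>\cdots\downarrow0$ with $\int_{a_{n}}^{a_{n-1}}\d r/\rho(r)=n$, continuous $\psi_{n}$ supported in $(a_{n},a_{n-1})$ with $0\le\psi_{n}(r)\le 2/(n\rho(r))$ and $\int\psi_{n}=1$, and set $\phi_{n}(x):=h_{n}(|x|)$ with $h_{n}(y):=\int_{0}^{y}\int_{0}^{s}\psi_{n}(r)\,\d r\,\d s$. Then $\phi_{n}\in C^{2}(\R^{d})$ (it vanishes near the origin), $|D\phi_{n}|\le1$, $\phi_{n}\to|\cdot|$ uniformly, and $D^{2}\phi_{n}(x)=\psi_{n}(|x|)\,|x|^{-2}xx^{T}+h_{n}'(|x|)\,|x|^{-1}(I-|x|^{-2}xx^{T})$. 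Applying Itô's formula to $\phi_{n}(Y(t\wedge\vartheta_{R}))$ and taking expectations, the Brownian and compensated-Poisson stochastic integrals are genuine mean-zero martingales on $[0,\vartheta_{R}]$ --- the standing continuity hypotheses on $b,\sigma$ and on $x\mapsto\int_{U}|c(x,u)|^{2}\nu(\d u)$ supply the square-integrability, using $|\phi_{n}(y+v)-\phi_{n}(y)|\le|v|$ for the jump integral --- so, since $\phi_n(0)=0$, $\E[\phi_{n}(Y(t\wedge\vartheta_{R}))]$ equals the expected $\d s$-integral over $[0,t\wedge\vartheta_{R}]$ of a drift--diffusion term plus a compensated-jump term.

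For the drift--diffusion term, with $A:=\sigma(\wdt X)-\sigma(X)$, the explicit form of $D\phi_{n},D^{2}\phi_{n}$ regroups the integrand as
\[
\frac{h_{n}'(|Y|)}{2|Y|}\bigl[\,2\lan Y,b(\wdt X)-b(X)\ran+|A|^{2}\,\bigr]+\frac12\psi_{n}(|Y|)\frac{|A^{T}Y|^{2}}{|Y|^{2}}-\frac{h_{n}'(|Y|)}{2|Y|^{3}}|A^{T}Y|^{2}.
\]
The first bracket is $\le\kappa_{R}|Y|\rho(|Y|)$ by \eqref{eq-coeffs-non-lip-sec-2}, the last term is $\le0$, and the middle term is $\le\psi_{n}(|Y|)|A|^{2}\le (2/n)|A|^{2}/\rho(|Y|)$, which is $O(1/n)$ uniformly on $[0,\vartheta_{R}]$ because \eqref{eq-coeffs-non-lip-sec-2} with the local boundedness of $b$ gives $|A|^{2}\le C_{R}|Y|$ while $\sup_{0<r\le\delta_{0}}r/\rho(r)<\infty$ by the concavity of $\rho$. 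For the jump term I use only the crude bound $|\phi_{n}(y+v)-\phi_{n}(y)-\lan D\phi_{n}(y),v\ran|\le2|v|$, which with \eqref{eq-integral-term-non-lip-sec-2} yields $\le2\kappa_{R}\rho(|Y|)$. Hence $\E[\phi_{n}(Y(t\wedge\vartheta_{R}))]\le C_{R}\int_{0}^{t}\E[\rho(|Y(s\wedge\vartheta_{R})|)]\,\d s+C_{R}t/n$; letting $n\to\infty$ (Fatou, $\phi_{n}\to|\cdot|$) and then applying Jensen's inequality ($\rho$ concave), $g(t):=\E[|Y(t\wedge\vartheta_{R})|]$ satisfies $g(t)\le C_{R}\int_{0}^{t}\rho(g(s))\,\d s$ with $g(0)=0$, so Bihari's inequality and \eqref{eq-rho-sec-2} force $g\equiv0$, i.e.\ $Y(t\wedge\vartheta_{R})=0$ for all $t$, a.s. Finally, since $X$ and $\wdt X$ are driven by the \emph{same} Poisson measure, once $Y\equiv0$ on $[0,\vartheta_{R})$ any jump at $\vartheta_{R}$ displaces $X$ and $\wdt X$ identically, so $|Y|$ cannot attain the level $\delta_{0}$; therefore $S_{\delta}\ge\tau_{R}$ a.s., $Y\equiv0$ on $[0,\tau_{R}]$, and letting $R\to\infty$ (so $\tau_{R}\uparrow\infty$) gives $\wdt X\equiv X$ a.s., which is the desired pathwise uniqueness.

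The step I expect to be the main obstacle is the interplay between the purely \emph{local} $L^{1}$ jump modulus \eqref{eq-integral-term-non-lip-sec-2} and the localization: one cannot stop merely at $\tau_{R}$, and the extra stopping time $S_{\delta}$ has to be removed at the end, the clean way being the observation that agreement of the two solutions propagates through jumps because they share the driving Poisson random measure. A secondary technical point is confirming that the Itô second-order correction genuinely vanishes as $n\to\infty$, which rests on $|\sigma(\wdt X)-\sigma(X)|^{2}=O(|Y|)$ on bounded sets and on $r/\rho(r)$ staying bounded near $0$.
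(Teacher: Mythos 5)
Your proposal is correct and follows essentially the same route as the paper's own proof: the same Yamada--Watanabe smoothing functions $\psi_n$, the same three stopping times $\tau_R$, $S_{\delta_0}$, and their minimum, the same Itô expansion with the drift--diffusion term controlled via \eqref{eq-coeffs-non-lip-sec-2}, the same crude Lipschitz-in-$|v|$ bound on the compensated-jump term combined with \eqref{eq-integral-term-non-lip-sec-2}, and the same Bihari closure followed by removal of $S_{\delta_0}$ and $\tau_R$. The only cosmetic differences are that you make the auxiliary estimate $|\sigma(\wdt X)-\sigma(X)|^2\le C_R|Y|$ on bounded sets explicit (which the paper uses implicitly via a possibly enlarged $\kappa_R$), and at the end you add a remark about agreement propagating through jumps, whereas the paper derives $\P\{S_{\delta_0}\le t\}=0$ directly from $\E[|\Delta_{t\wedge S_{\delta_0}}|]=0$; both observations close the argument.
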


The proof of Theorem \ref{thm1-path-uniqueness-sec-2} is in the same spirit of  Yamada and Watanabe's argument for pathwise uniqueness  in  \cite{YamaW-71} and \cite{FuLi-10,LiMyt-11}.  The key idea is to construct a sequence of monotone $C^{2}$ functions $\{\psi_{n}\}$ satisfying certain conditions so that one can bound  the growth of the $L_{1}$ distance $\E[|\wdt X(t\wedge S_{\delta_{0}}) - X(t\wedge S_{\delta_{0}})|] $ of two solutions $\wdt X, X$ with the same initial condition, where $S_{\delta_{0}}$  is a stopping time related to the solutions $\wdt X, X$. Next we use a Bihari's inequality type argument to obtain   $\E[|\wdt X(t\wedge S_{\delta_{0}}) - X(t\wedge S_{\delta_{0}})|] =0$, from which we derive $\wdt X(t)= X(t)$ a.s. This, together with the right-continuity of solutions to  \eqref{eq-sde-sec-2}, enables us to establish the pathwise uniqueness result. To preserve the flow of presentation, we relegate the proof of Theorem \ref{thm1-path-uniqueness-sec-2} to  Appendix \ref{sect-appendix}.


Next we propose a   different assumption than that of Assumption \ref{assumption1-non-lip-sec-2}  for pathwise uniqueness.
\begin{Assumption}\label{assume-non-lip-sec-2}
There exist a positive number $\delta_{0}$  and  a nondecreasing   and  concave function $\varrho: [0,\infty)\mapsto [0,\infty)$ satisfying \begin{equation}
\label{eq-varrho-conditions-Feller}
0 < \varrho(r) \le  (1+r)^{2}\varrho(r/(1+r)) \text{ for all } r >  0,\quad  \text{ and }\quad \int_{0^{+}} \frac{\d r}{\varrho(r)} = \infty,
\end{equation}
such that for all  $R > 0$ and  $x,z\in \R^{d}$ with  $|x| \vee |z| \le R$ and $|x-z| \le \delta_{0}$, \begin{equation}
\label{eq-Feller-condition} \begin{aligned}
2   \lan x-z,  \ b(x)-b(z)\ran +  |\sigma(x) - \sigma(z)|^{2} 
 + \int_{U}| c(x,u)-c(z,u) |^{2}  \nu(\d u) \le  \kappa_{R} \varrho(|x-z|^{2}),\end{aligned}
\end{equation}  where $\kappa_{R}$ is a positive constant.
 \end{Assumption}

Some common functions  satisfying Assumptions \ref{assumption1-non-lip-sec-2} and \ref{assume-non-lip-sec-2} include $\varrho(r) = r$ and concave and increasing  functions such as $\varrho(r) = r \log(1/r)$,  $\varrho(r) = r \log( \log(1/r))$, and $\varrho(r) = r \log(1/r)  \log( \log(1/r))$ for $r \in (0,\delta)$ with $\delta > 0$ small enough.
It is worth pointing out that \eqref{eq-coeffs-non-lip-sec-2} and \eqref{eq-integral-term-non-lip-sec-2} in Assumption  \ref{assumption1-non-lip-sec-2}  and  \eqref{eq-Feller-condition} in Assumption \ref{assume-non-lip-sec-2}  only require the modulus continuity to hold in a small neighborhood of the diagonal line $x=z$ in $\R^{d}\otimes\R^{d}$ with $|x|\vee |z| \le R$ for each $R > 0$.  This is in contrast to  those in \cite{FuLi-10,LiMyt-11}. Note, in particular, that the constant $\kappa_{R}$ in \eqref{eq-coeffs-non-lip-sec-2},  \eqref{eq-integral-term-non-lip-sec-2} and \eqref{eq-Feller-condition} may depend on $R$. These conditions are very general but make our analysis very subtle; careful analysis are required to accommodate  various stopping times.
On the other hand,   even in the case with $\varrho (r) =r$,  since $\nu(U)$ is not necessarily  finite, Assumptions \ref{assumption1-non-lip-sec-2} and \ref{assume-non-lip-sec-2} in general cannot imply each other.  Moreover, instead of using  a sequence of $C^{2} $ functions $\{\psi_{n}\}$, we use a single $C^{2}$ function $H$  to obtain 
the desired pathwise uniqueness result in Theorem \ref{thm-path-uniqueness-sec-2}.  Compared with the aforementioned references, the proof of Theorem \ref{thm-path-uniqueness-sec-2} is simpler and more direct. Again, we arrange the proof of Theorem \ref{thm-path-uniqueness-sec-2} to Appendix \ref{sect-appendix}.

\begin{thm}\label{thm-path-uniqueness-sec-2}
Under Assumptions  \ref{assumption-non-linear-growth-sec-2} and
\ref{assume-non-lip-sec-2}, pathwise uniqueness holds for \eqref{eq-sde-sec-2}. 
\end{thm}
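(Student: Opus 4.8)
\textbf{Proof proposal for Theorem \ref{thm-path-uniqueness-sec-2}.}

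The plan is to follow the ``single $C^{2}$ function'' strategy advertised in the text: rather than building a sequence $\{\psi_{n}\}$ as in the Yamada--Watanabe approach, one constructs a single function $H$ built from the concave modulus $\varrho$ and applies It\^o's formula to a suitably rescaled version of $|\wdt X(t) - X(t)|^{2}$. First I would fix two strong solutions $\wdt X, X$ driven by the same $W$ and $N$ with $\wdt X(0) = X(0) = x_{0}$ a.s.; by Theorem \ref{thm-no-explosion-sec-2} both are non-exploding under Assumption \ref{assumption-non-linear-growth-sec-2}. Set $Z(t) := \wdt X(t) - X(t)$ and, for $R > 0$, introduce the stopping time $\tau_{R} := \inf\{t \ge 0 : |\wdt X(t)| \vee |X(t)| \ge R\}$, together with the first time $S_{\delta_{0}} := \inf\{t \ge 0 : |Z(t)| \ge \delta_{0}\}$ at which the local modulus-of-continuity condition \eqref{eq-Feller-condition} would fail; write $\theta_{R} := \tau_{R} \wedge S_{\delta_{0}}$. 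Since $Z(0) = 0$ and solutions are right-continuous, the key point is that before $\theta_{R}$ all increments stay in the regime where \eqref{eq-Feller-condition} applies with constant $\kappa_{R}$.

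Next I would define, from the scaling relation $\varrho(r) \le (1+r)^{2}\varrho(r/(1+r))$ in \eqref{eq-varrho-conditions-Feller}, a ``Bihari weight'': let $G$ be an antiderivative of $1/\varrho$ near $0^{+}$, which diverges to $-\infty$ as $r \downarrow 0$ by the second condition in \eqref{eq-varrho-conditions-Feller}, and build $H: \R^{d} \to \R$ of the form $H(z) = h(|z|^{2})$ where $h$ is smooth, nondecreasing, concave, vanishing at $0$, and comparable to $\varrho$ in the sense that $h'(r)\varrho(r) \le C h(r)$ for small $r$ — the scaling hypothesis is exactly what makes such an $h$ available while keeping $H$ of at most quadratic growth so that It\^o's formula and the ensuing expectations are legitimate after localization. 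Then I would apply It\^o's formula for jump processes to $H(Z(t \wedge \theta_{R}))$. The drift part contributes $\lan Dh, b(\wdt X) - b(X)\ran$ plus a Hessian term $\frac12 \tr((\sigma(\wdt X)-\sigma(X))(\cdots)^{T} D^{2}H)$; concavity of $h$ handles the ``bad'' rank-one piece of the Hessian (the $xx^{T}$ term, as in the proof of Theorem \ref{thm-no-explosion-sec-2}), leaving a bound by $h'(|Z|^{2})\big(2\lan Z, b(\wdt X)-b(X)\ran + |\sigma(\wdt X)-\sigma(X)|^{2}\big)$. For the jump part, using concavity of $h$ one bounds $h(|Z + c(\wdt X,u) - c(X,u)|^{2}) - h(|Z|^{2})$ by $h'(|Z|^{2})\big(2\lan Z, \Delta c\ran + |\Delta c|^{2}\big)$ where $\Delta c := c(\wdt X,u) - c(X,u)$; after subtracting the compensator's first-order term and integrating in $\nu$, the $\lan Z,\Delta c\ran$ pieces cancel and one is left with $h'(|Z|^{2})\int_{U}|\Delta c|^{2}\nu(\d u)$. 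Summing the diffusion and jump contributions and invoking \eqref{eq-Feller-condition}, the generator acting on $H$ is dominated by $h'(|Z|^{2})\kappa_{R}\varrho(|Z|^{2}) \le C\kappa_{R} h(|Z|^{2}) = C\kappa_{R} H(Z)$; the stochastic integrals are (local) martingales that vanish in expectation after localization.

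This yields $\E[H(Z(t\wedge\theta_{R}))] \le C\kappa_{R}\int_{0}^{t}\E[H(Z(s\wedge\theta_{R}))]\,\d s$, and since $H(Z(0)) = 0$, Gronwall's inequality forces $\E[H(Z(t\wedge\theta_{R}))] = 0$ for all $t$, hence $Z(t\wedge\theta_{R}) = 0$ a.s. Letting $R \to \infty$, non-explosion gives $\tau_{R} \to \infty$; moreover $Z \equiv 0$ up to $\theta_{R}$ means $S_{\delta_{0}}$ is never actually reached (the process $Z$ cannot jump away from $0$ before leaving the $\delta_{0}$-tube, because it is identically $0$), so $\theta_{R} = \tau_{R}$ and we conclude $\wdt X(t) = X(t)$ a.s.\ for every $t \ge 0$; right-continuity upgrades this to indistinguishability, which is pathwise uniqueness. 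The main obstacle I anticipate is twofold: first, the careful construction of $H$ — one must choose $h$ so that simultaneously (i) $H$ has subquadratic (or at worst quadratic) growth so that It\^o's formula and the jump estimates are integrable under the localization, (ii) $h$ is $C^{2}$ and concave with $h(0)=0$, and (iii) the comparison $h'(r)\varrho(r) \le C h(r)$ holds near $0$, which is precisely where the nonstandard scaling inequality $\varrho(r) \le (1+r)^{2}\varrho(r/(1+r))$ is consumed; second, the bookkeeping of the various stopping times — one has to verify that on $[0,\theta_{R})$ every jump increment $|Z(s-) + \Delta c|$ and $|Z(s-)|$ genuinely lies in the region $|x|\vee|z|\le R$, $|x-z|\le\delta_{0}$ where \eqref{eq-Feller-condition} is assumed, which requires a slightly delicate argument handling the jump that crosses the boundary of the tube.
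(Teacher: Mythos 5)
Your overall scaffolding matches the paper's: localize by $\tau_R$ and $S_{\delta_0}$, apply It\^o's formula to a single $C^2$ Lyapunov function of $|\wdt X - X|$, use concavity of that function to absorb both the bad rank-one piece of the Hessian and the jump term (yielding the compact bound $h(|z+w|^2) - h(|z|^2) - 2h'(|z|^2)\langle z, w\rangle \le h'(|z|^2)|w|^2$), and then iterate an integral inequality. The main divergence is in how you close the loop. The paper fixes $H(|z|) = |z|^2/(1+|z|^2)$ once and for all, so the generator bound reads $\kappa_R\,\varrho(|Z|^2)/(1+|Z|^2)^2$; the scaling hypothesis $\varrho(r) \le (1+r)^2\varrho(r/(1+r))$ is used precisely at this point to rewrite it as $\kappa_R\,\varrho(H(|Z|))$, after which concavity of $\varrho$ plus Jensen gives $\E[H] \le \kappa_R\int_0^t \varrho(\E[H])\,\d s$, and Bihari with $\int_{0^+}\d r/\varrho(r) = \infty$ finishes. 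You instead propose to design $h$ so that $h'(r)\varrho(r) \le C h(r)$, turning the Bihari step into a straight Gronwall. That is a legitimate alternative (take $h(r) = \exp\{CG(r)\}$ with $G(r) = \int_1^r \d s/\varrho(s)$ and $0 < C < \varrho'_+(0^+)$; then $h'\varrho = Ch$ identically, $h(0^+) = 0$, and $h$ is concave near $0$ since $\varrho'$ is nonincreasing), but it is a different technical route from the paper and you do not carry it out — you flag the construction of $h$ as ``the main obstacle'' and leave it there.

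One conceptual correction: you attribute the construction of $h$ to the scaling hypothesis on $\varrho$ (``the scaling hypothesis is exactly what makes such an $h$ available''), but that is not its role. The scaling hypothesis is tailored to the specific choice $H(r) = r/(1+r)$ and is consumed in the identity $H'(r)\varrho(r) = \varrho(r)/(1+r)^2 \le \varrho(H(r))$, which feeds Jensen and Bihari; it plays no part in verifying $h'\varrho \le Ch$ for an adapted $h$ (that verification only needs concavity of $\varrho$, $\varrho(0^+)=0$, and the Osgood condition). So either commit to the paper's fixed $H$ plus Jensen/Bihari, or commit to your adapted-$h$ plus Gronwall route and actually construct $h$; mixing the two justifications leaves a genuine gap. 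The rest of your argument — the bookkeeping of stopping times, the observation that $Z(\theta_R) = 0$ forces $S_{\delta_0} > \tau_R$, and the passage $R\to\infty$ using non-explosion and right-continuity — tracks the paper's Appendix proof correctly.
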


\begin{rem}
In case  that the solution to \eqref{eq-sde-sec-2} has a finite explosion time with positive probability, then pathwise uniqueness holds up to the explosion time under Assumptions \ref{assumption1-non-lip-sec-2} or \ref{assume-non-lip-sec-2}.
\end{rem}

\begin{thm}\label{thm-existence+uniquness}
Suppose  Assumption \ref{assumption-non-linear-growth-sec-2} and either Assumption \ref{assumption1-non-lip-sec-2} or Assumption \ref{assume-non-lip-sec-2} hold.
Then for any $x\in \R^{d}$, \eqref{eq-sde-sec-2} has a unique strong non-explosive solution $X=\{X(t), t\ge 0\}$ satisfying $X(0) =x$.
\end{thm}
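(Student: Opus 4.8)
The plan is to combine the Yamada–Watanabe principle with the nonexplosion bound from Theorem \ref{thm-no-explosion-sec-2} and the pathwise uniqueness results of Theorems \ref{thm1-path-uniqueness-sec-2} and \ref{thm-path-uniqueness-sec-2}. First I would fix $x \in \R^{d}$ and establish the existence of a \emph{weak} solution to \eqref{eq-sde-sec-2} with $X(0) = x$ that is defined up to its (possibly finite) lifetime. To do this, one truncates the coefficients: for $n \in \N$, let $b_{n}, \sigma_{n}, c_{n}$ agree with $b, \sigma, c$ on the ball $\{|x| \le n\}$ and be modified outside so that they are bounded and continuous (and so that $x \mapsto \int_{U}|c_{n}(x,u)|^{2}\nu(\d u)$ is bounded and continuous). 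Since $b,\sigma$ are continuous and the map $x \mapsto \int_{U}|c(x,u)|^{2}\nu(\d u)$ is continuous by the standing assumption, the truncated coefficients satisfy the linear growth and continuity hypotheses of the classical existence theory, so by, e.g., \cite[Theorem IV.9.1]{IkedaW-89} (or standard martingale-problem / Skorokhod-type arguments for jump SDEs) the truncated equation has a weak solution $X_{n}$. Define $\tau_{n} := \inf\{t \ge 0 : |X_{n}(t)| \ge n\}$; on $[0,\tau_{n})$ the process $X_{n}$ solves the original equation \eqref{eq-sde-sec-2}. A localization/consistency argument (using pathwise uniqueness below, or a direct patching of laws) glues these into a single weak solution $X$ defined on the stochastic interval $[0, e)$, where $e := \lim_{n} \tau_{n}$ is the explosion time.

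Next I would invoke Theorem \ref{thm-no-explosion-sec-2}: Assumption \ref{assumption-non-linear-growth-sec-2} is in force, so $e = \infty$ a.s., i.e.\ the weak solution $X$ is non-explosive. Then I would invoke pathwise uniqueness: under Assumption \ref{assumption-non-linear-growth-sec-2} together with Assumption \ref{assumption1-non-lip-sec-2} (via Theorem \ref{thm1-path-uniqueness-sec-2}) or Assumption \ref{assume-non-lip-sec-2} (via Theorem \ref{thm-path-uniqueness-sec-2}), pathwise uniqueness holds for \eqref{eq-sde-sec-2}. Having a weak solution together with pathwise uniqueness, the classical Yamada–Watanabe theorem — whose proof carries over verbatim to the jump setting \eqref{eq-sde-sec-2}, see e.g.\ the discussion in \cite{IkedaW-89} — yields the existence of a \emph{strong} solution and uniqueness in law. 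Since $x \in \R^{d}$ was arbitrary, this gives, for every initial condition, a unique strong non-explosive solution $X = \{X(t), t \ge 0\}$ with $X(0) = x$, which is the assertion.

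One subtlety I would be careful about is the interplay between the truncation and nonexplosion: a priori Theorem \ref{thm-no-explosion-sec-2} is stated for ``the solution'' of \eqref{eq-sde-sec-2}, so I would apply it to the weak solution $X$ constructed on $[0,e)$ by running the Lyapunov estimate $\LL \Phi \le \kappa \Phi$ against the stopped process $X(\cdot \wedge \tau_{n})$ and letting $n \to \infty$; the Foster–Lyapunov criterion (Theorem 2.1 of \cite{MeynT-93III}) then forces $e = \infty$. A second point worth a line is that the standing continuity assumptions on $b, \sigma$ and on $x \mapsto \int_{U}|c(x,u)|^{2}\nu(\d u)$ are exactly what is needed for the truncated coefficients to fall within the scope of the classical weak existence theorem, so no extra hypothesis is required.

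I expect the main obstacle to be purely expository rather than mathematical: namely, carefully setting up the localized weak solution and the Yamada–Watanabe passage in the jump-SDE framework \eqref{eq-sde-sec-2} with a $\sigma$-finite (not necessarily finite) L\'evy measure $\nu$, so that the compensated integral term is well defined at each truncation level and the patching of the localized laws is consistent. Once that scaffolding is in place, the theorem follows immediately by feeding in Theorems \ref{thm-no-explosion-sec-2}, \ref{thm1-path-uniqueness-sec-2} and \ref{thm-path-uniqueness-sec-2}.
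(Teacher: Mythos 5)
Your proposal follows essentially the same truncation-and-patching strategy as the paper: truncate the coefficients to make them bounded, obtain a weak solution for the truncated equation, upgrade to a strong solution via pathwise uniqueness, glue the localized solutions, and invoke Theorem~\ref{thm-no-explosion-sec-2} for nonexplosion. The one place your argument is on shaky ground is the primary citation for weak existence of the truncated SDE: \cite[Theorem IV.9.1]{IkedaW-89} is a strong existence/uniqueness result that \emph{requires} local Lipschitz coefficients, which the truncated $b_n,\sigma_n,c_n$ in this paper emphatically do not satisfy (they inherit only the non-Lipschitz moduli of Assumption~\ref{assumption1-non-lip-sec-2} or~\ref{assume-non-lip-sec-2}). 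Your parenthetical ``or standard martingale-problem / Skorokhod-type arguments'' is the right idea and in fact the route the paper takes, citing Theorem~2.2 of \cite{Stroock-75}, which gives weak existence for the jump SDE under bounded continuous coefficients without any Lipschitz hypothesis; you should cite that (or an equivalent martingale-problem existence theorem) rather than Ikeda–Watanabe. With that substitution the argument is sound and parallels the paper's; the only stylistic difference is that you explicitly name the Yamada–Watanabe passage from weak existence plus pathwise uniqueness to strong existence, whereas the paper performs it implicitly (``pathwise uniqueness holds and hence a unique strong solution $X^{(n)}$ exists'').
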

\begin{proof} Suppose that Assumptions \ref{assumption-non-linear-growth-sec-2} and  \ref{assumption1-non-lip-sec-2}   hold; the proof for the case under Assumptions \ref{assumption-non-linear-growth-sec-2} and \ref{assume-non-lip-sec-2}   is similar.   Let us fix some $x\in \R^{d}$. 
For each $n\in \N$ with $|x| < n$, let $\psi_{n}: \R^{d}\to [0,1]$ be a $C^{\infty}$ function such that $\psi_{n}(x) = 1$ for $|x| \le n$ and $\psi_{n}(x) =0$ for $|x| \ge n+1$.
Define $b_{n} := \psi_{n} b, \sigma_{n}: = \psi_{n} \sigma $ and $c_{n} : =\psi_{n} c$. Then
\begin{equation}
\label{eq-bsgc-mn}
b_{m}(x)=b_{n}(x) = b(x), \  \sigma_{m}(x) = \sigma_{n}(x) = \sigma(x), \text{ and }c_{m}(x,u)=c_{n}(x,u) = c(x,u)
\end{equation}    for all $\{ (x,u) \in \R^{d}\times U:  |x| \le n \}  $  and $m \ge n$.
Obviously,  for each  $n\in \N$, $b_{n}(\cdot)$ and   $\sigma_{n}(\cdot)$ are bounded and continuous and that $c_{n}(\cdot,\cdot)$ is measurable.  Moreover,  for any $x\in \R^{d}$,   $M(x,B): = \nu\{u \in U: c_{n}(x,u) \in B\}, B \in \B(\R^{d})$,   is a $\sigma$-finite measure on $\B(\R^{d})$ and satisfies \begin{align*} \int_{\R^{d}} \frac{|y|^{2}}{1+ |y|^{2}} M(x,\d y) &  =   \int_{U} \frac{|c_{n}(x,u)|^{2}}{1+ |c_{n}(x,u)|^{2}} \nu(\d u)   \le \int_{U} \psi_{n}(x)^{2}|c (x,u)|^{2} \nu(\d u) < \infty.      \end{align*}  Likewise,  for any $\phi \in C_{b}(\R^{d})$, the function \begin{align*}\int_{\R^{d}} \frac{|y|^{2}}{1+ |y|^{2}} \phi(y) M(x,\d y) &= \int_{U} \frac{|c_{n}(x,u)|^{2}}{1+ |c_{n}(x,u)|^{2}} \phi(c_{n}(x,u))\nu(\d u)\\ &  \le \|\phi\|_{\infty} \int_{U} \psi_{n}(x)^{2}|c (x,u)|^{2} \nu(\d u)
\le \kappa  \psi_{n}(x)^{2} \|\phi\|_{\infty}  [|x|^{2} \zeta(|x|^{2}) + 1]  \end{align*}
is bounded and continuous,  where the last inequality follows from
\eqref{eq-coeffs-non-linear-growth-sec-2} in
Assumption \ref{assumption-non-linear-growth-sec-2}.
Now consider the the operator \begin{displaymath} \begin{aligned}
 \LL_{n}f(x) & :  =    \lan Df(x), b_{n}(x) \ran + \frac12\tr\bigl(\sigma_{n}(x)\sigma_{n}(x)^{T}D^{2} f(x)\bigr) \\
    & \qquad + \int_{U} \bigl[ f(x+ c_{n}(x,u)) -f(x) - \lan Df(x), c_{n}(x,u) \ran \bigr]\nu(\d u) \\
     & = \lan Df(x), b_{n}(x) \ran + \frac12\tr\bigl(\sigma_{n}(x)\sigma_{n}(x)^{T}D^{2} f(x)\bigr) \\
     & \qquad  + \int_{\R^{d}} [f(x+ y) - f(x) - \lan Df(x), y\ran] M(x,\d y), \quad f\in C_{c}^{2}(\R^{d}).  \end{aligned}
\end{displaymath}
Thanks to Theorem 2.2 in \cite{Stroock-75},  the martingale problem for $\LL_{n}$  has a solution. Then by virtue of Theorem 2.3 of \cite{Kurtz-11}, the stochastic differential equation
\begin{equation}
\label{eq-sde n-sec 2}
X^{(n)}(t) = x + \int_{0}^{t} b_{n}(X^{(n)}(s)) \d s + \int_{0}^{t} \sigma_{n}(X^{(n)}(s)) \d W(s) + \int_{0}^{t}\int_{U} c_{n}(X^{(n)}(s-), u) \wdt N(\d s,\d u)\end{equation} has a weak solution $X^{(n)}$.

Apparently $b_{n}$ and $\sigma_{n}$ satisfy Assumption \ref{assumption1-non-lip-sec-2}. On the other hand, for all $x,z\in \R^{d}$ with $|x|\vee |z| \le R$ and $|x-z| \le \delta_{0}$, we have from \eqref{eq-integral-term-non-lip-sec-2} that \begin{align}\label{eq-cn difference estimate}
\nonumber  \int_{U}\!   |c_{n}(x,u)-c_{n}(z,u)|\nu(\d u) &\le    \int_{U} \! \big[  |\psi_{n}(x)| |c(x,u)-c(z,u)| + |\psi_{n}(x) - \psi_{n}(z)| |c(z,u)|\big]\nu(\d u)   \\
\nonumber    &   \le   \int_{U}  \!  |c(x,u)-c(z,u)|\nu(\d u) +  |\psi_{n}(x) - \psi_{n}(z)|\int_{U} |c(z,u)|\nu(\d u)\\
    & \le \kappa_{R} \rho(|x-z|)  + K_{R} |x-z|,
  \end{align} where 
 we used the facts that $\psi_{n}$ is locally Lipschitz and that the function $x\mapsto \int_{U} |c(x,u)|\nu(\d u)$ is locally bounded to obtain the last  inequality. Furthermore, since $\rho(\cdot)$ is concave and $\rho(0) =0$, it follows that $\rho(r) \ge \frac{\rho(\delta_{0})}{\delta_{0}} r$ or $r \le \frac{\delta_{0}} {\rho(\delta_{0})} \rho(r)$ for all $r \in [0,\delta_{0}]$. Applying this observation in \eqref{eq-cn difference estimate} leads to   $$\int_{U}   |c_{n}(x,u)-c_{n}(z,u)|\nu(\d u) \le  \wdt\kappa_{R} \rho(|x-z|),  $$   for all $x,z\in \R^{d}$ with $|x|\vee |z| \le R$ and $|x-z| \le \delta_{0}$,
  where $\wdt \kappa_{R}$  is a  positive constant. Therefore   $c_{n}$ also satisfies Assumption \ref{assumption1-non-lip-sec-2}.
Theorem  \ref{thm1-path-uniqueness-sec-2} then   implies that pathwise uniqueness holds.
   Now by Theorem 2 of \cite{BarczyLP-15},  for each  $n\in \N$, a unique strong solution $X^{(n)}$ to \eqref{eq-sde n-sec 2} exists. Let $\tau_{n}:=\inf\{ t \ge 0: |X^{(n)}(t)| > n\}$ denote the  first exit time  of  $X^{(n)}$  from  $B(n) $.

 Furthermore, for any   $m\ge n$,    again thanks to the pathwise uniqueness as well as \eqref{eq-bsgc-mn},  the   processes $X^{(m)}$ and $X^{(n)}$   have the same first exit time $\tau_{n}$ from  $B(n) $
  and   $X^{(m)} (t) = X^{(n)}(t)$ for all $t < \tau_{n}$.  Now the process $X$ defined by $X(t) : = X^{(n)}(t)$ for all $t < \tau_{n}$,  $n \in \N$ is the unique strong solution to \eqref{eq-sde-sec-2} with $X(0) =x$;  Theorem \ref{thm-no-explosion-sec-2} implies that $X$ has no finite explosion time. This completes the proof.
\end{proof}

\begin{cor}\label{cor-existence+uniqueness general sde} Let   $U_{0}\subset U$ so that   $\nu(U\setminus U_{0}) < \infty$.  Suppose Assumption \ref{assumption-non-linear-growth-sec-2} and either Assumption \ref{assumption1-non-lip-sec-2} or Assumption \ref{assume-non-lip-sec-2}  (with $U$   replaced by $U_{0}$) hold. Then  for any initial condition $x\in \R^{d}$, the stochastic differential equation
\begin{equation}
\label{eq-sde-2}
\begin{aligned}
X(t) & = x+ \int_{0}^{t} b(X(s)) \d s +\int_{0}^{t} \sigma(X(s))\d W(s) \\ & \qquad+ \int_{0}^{t}\int_{U_{0}} c(X(s-),u)\wdt N(\d s, \d u) +\int_{0}^{t}  \int_{U\setminus U_{0}} c(X(s-),u) N(\d s, \d u)
\end{aligned}
\end{equation}
     has a unique strong non-explosive solution $X=\{X(t), t\ge 0\}$.
\end{cor}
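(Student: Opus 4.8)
The plan is to run a standard \emph{interlacing} argument, exploiting that the large‑jump part of the noise has finite activity. Write $N = N_{0}+N_{1}$, where $N_{0}$ (resp.\ $N_{1}$) is the restriction of $N$ to $[0,\infty)\times U_{0}$ (resp.\ $[0,\infty)\times(U\setminus U_{0})$), so that on $U_{0}$ one has $\wdt N(\d s,\d u)=N_{0}(\d s,\d u)-\d s\,\nu(\d u)$. Since $\nu(U\setminus U_{0})<\infty$, the random measure $N_{1}$ almost surely has only finitely many atoms in $[0,T]\times(U\setminus U_{0})$ for each $T>0$; let $0=:\tau_{0}<\tau_{1}<\tau_{2}<\cdots$ be the successive times at which $N_{1}$ charges a point, with associated marks $u_{1},u_{2},\dots\in U\setminus U_{0}$. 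Each $\tau_{n}$ is a stopping time, and, because $(\tau_{n})$ are the jump times of a Poisson process of finite rate $\nu(U\setminus U_{0})$, we have $\tau_{n}\uparrow\infty$ a.s.

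First I would treat the interval $[\tau_{0},\tau_{1})$: there equation \eqref{eq-sde-2} reduces exactly to \eqref{eq-sde-sec-2} with $U$ replaced by $U_{0}$ (the uncompensated integral over $U\setminus U_{0}$ contributes nothing). Assumption~\ref{assumption-non-linear-growth-sec-2} with $U_{0}$ in place of $U$ follows from Assumption~\ref{assumption-non-linear-growth-sec-2} because $|c(x,u)|^{2}\ge 0$, and one of Assumptions~\ref{assumption1-non-lip-sec-2}, \ref{assume-non-lip-sec-2} holds with $U_{0}$ by hypothesis, so Theorem~\ref{thm-existence+uniquness} applied to the equation with mark space $U_{0}$ gives a unique strong, non‑exploding solution on $[0,\infty)$; call its restriction to $[0,\tau_{1})$ our process $X$. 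As this solution does not explode and $\tau_{1}<\infty$ a.s., $X(\tau_{1}-)$ exists and is finite a.s., and I set $X(\tau_{1}):=X(\tau_{1}-)+c(X(\tau_{1}-),u_{1})$; this is a.s.\ finite because, conditionally on $X(\tau_{1}-)=y$, the mark $u_{1}$ has law $\nu(\cdot\cap(U\setminus U_{0}))/\nu(U\setminus U_{0})$ and $c(y,\cdot)$ is $\nu$‑a.e.\ finite (the function $x\mapsto\int_{U}|c(x,u)|^{2}\nu(\d u)$ being finite).

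Next I would iterate. Assume $X$ has been built and shown to be the unique strong solution of \eqref{eq-sde-2} on $[0,\tau_{n}]$, with $\mathcal F_{\tau_{n}}$‑measurable, a.s.\ finite value $X(\tau_{n})$. By the strong Markov property of the driving noise — $W(\tau_{n}+\cdot)-W(\tau_{n})$ is again a Brownian motion and the shift of $N_{0}$ past $\tau_{n}$ is again a Poisson random measure on $U_{0}$ with intensity $\d t\,\nu(\d u)$, both independent of $\mathcal F_{\tau_{n}}$ — Theorem~\ref{thm-existence+uniquness} again yields a unique strong non‑exploding solution started from $X(\tau_{n})$, which I glue onto $[\tau_{n},\tau_{n+1})$ and then extend across $\tau_{n+1}$ by adding the jump $c(X(\tau_{n+1}-),u_{n+1})$ exactly as above. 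Since $\tau_{n}\uparrow\infty$ a.s., this defines a right‑continuous, left‑limited process $X$ on all of $[0,\infty)$. That $X$ solves \eqref{eq-sde-2} then amounts to decomposing each stochastic integral $\int_{0}^{t}(\cdot)$ as $\sum_{k\ge 0}\int_{\tau_{k}\wedge t}^{\tau_{k+1}\wedge t}(\cdot)$ and matching terms; non‑explosion is immediate because on each $[\tau_{n},\tau_{n+1})$ the path is that of a non‑exploding solution and $\tau_{n}\to\infty$. Uniqueness follows by the same decomposition plus induction on $n$: any strong solution of \eqref{eq-sde-2} must, on $[\tau_{n},\tau_{n+1})$, coincide with the pathwise unique solution of the $U_{0}$‑equation started at its value at $\tau_{n}$ (Theorem~\ref{thm-existence+uniquness}) and must make the prescribed jump at $\tau_{n+1}$, hence is indistinguishable from $X$.

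The main obstacle is the bookkeeping around the stopping times $\tau_{n}$: one must check carefully that the noise restarted at $\tau_{n}$ — in particular $N_{0}$ restricted to $[\tau_{n},\infty)\times U_{0}$ — is again a Brownian motion / Poisson random measure with the correct intensity, independent of the past, so that Theorem~\ref{thm-existence+uniquness} genuinely applies at each step, and that the concatenated integrals reassemble into the integrals of \eqref{eq-sde-2} (with, a.s., no spurious simultaneous atom of $N_{0}$ at any $\tau_{n}$). This is the classical interlacing construction for finite‑activity jumps, and it relies essentially on $\nu(U\setminus U_{0})<\infty$ to guarantee $\tau_{n}\to\infty$, hence global existence.
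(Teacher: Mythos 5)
Your proposal is correct and follows exactly the paper's approach: the paper proves the corollary by the standard interlacing procedure (citing Theorem 6.2.9 of Applebaum), first invoking Theorem~\ref{thm-existence+uniquness} for the $U_{0}$-equation and then stitching across the finitely many large jumps generated by the finite measure $\nu\restriction_{U\setminus U_{0}}$. You have simply written out in full the bookkeeping (jump times $\tau_{n}$, restarting the noise at $\tau_{n}$, gluing, and inductive uniqueness) that the paper delegates to the cited reference.
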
 \begin{proof}
This corollary follows from the standard interlacing procedure as in the proof Theorem 6.2.9 of \cite{APPLEBAUM}.  Indeed, under Assumptions \ref{assumption-non-linear-growth-sec-2} and   \ref{assumption1-non-lip-sec-2} or Assumptions  \ref{assumption-non-linear-growth-sec-2} and \ref{assume-non-lip-sec-2}  (with $U$   replaced by $U_{0}$), for any initial condition, Theorem  \ref{thm-existence+uniquness} implies that the SDE \begin{displaymath}
\d Y(t) = b(Y(t)) \d t + \sigma(Y(t))\d W(t) + \int_{U_{0}} c(Y(t-),u)\wdt N(\d t, \d u)
\end{displaymath} has a unique strong non-exploding solution. Next we use the interlacing procedure as in the proof Theorem 6.2.9 of \cite{APPLEBAUM} to construct a solution to \eqref{eq-sde-2}. The solution is unique thanks to Theorems \ref{thm1-path-uniqueness-sec-2} or \ref{thm-path-uniqueness-sec-2} and the interlacing structure.
\end{proof}

\begin{exm}\label{example1}
Let us consider the following SDE \begin{equation}
\label{eq-example1}
\d X(t) = b(X(t))\d t + \sigma(X(t))\d W(t) + \int_{U} c(X(t-), u) \wdt N(\d t, \d u), X(0) = x \in \R^{3},
\end{equation} where $W$ is a 3-dimensional standard Brownian motion, $\wdt N(\d t,\d u)$ is a compensated Poisson random measure with compensator $\d t\,\nu(\d u)$ on $[0,\infty)\times U$, in which $U=  \{u\in \R^{3}: 0 < |u| <1 \}$ and $\nu(\d u) : = \frac{\d u}{|u|^{3+\alpha}}$ for some $\alpha\in (0,2)$. The coefficients of \eqref{eq-example1} are given by
\begin{displaymath}
b(x)=\begin{pmatrix}-x_{1}^{1/3}-x_{1}^{3}\\ -x_{2}^{1/3}-x_{2}^{3}\\-x_{3}^{1/3}-x_{3}^{3}\\ \end{pmatrix}\!, \
\sigma(x) = \begin{pmatrix}\frac{x_{1}^{2/3}}{\sqrt 2} + 1 & \frac{x_{2}^{2}}{3} & \frac{x_{3}^{2}}{3}\\    \frac{x_{1}^{2}}{3} &\frac{x_{2}^{2/3}}{\sqrt 2} + 1& \frac{x_{3}^{2}}{3} \\ \frac{x_{1}^{2}}{3} & \frac{x_{2}^{2}}{3} & \frac{x_{3}^{2/3}}{\sqrt 2} + 1& \end{pmatrix}\!,\
c(x,u) = \begin{pmatrix} \gamma x_{1}^{2/3} |u| \\  \gamma x_{2}^{2/3} |u|\\ \gamma x_{3}^{2/3} |u| \end{pmatrix}\!,
\end{displaymath} in which $\gamma$ is a positive constant so that $\gamma^{2} \int_{U} |u|^{2} \nu(\d u) = \frac12$.

Note that even without jumps, the coefficients of \eqref{eq-example1} do not satisfy  conditions (H1) and
(H2) in  \cite{FangZ-05} since   $\sigma$ and $b$ grow very fast in the neighborhood of $\infty$ and they
are H\"older continuous with orders $\frac23$
and $ \frac13$, respectively.
Nevertheless,   the coefficients of \eqref{eq-example1} still satisfy  Assumptions \ref{assumption-non-linear-growth-sec-2}  and  \ref{assume-non-lip-sec-2} and hence a unique non-exploding strong solution of \eqref{eq-example1} exists.  The verifications of these assumptions are as follows.
\begin{align}\label{eq1-ex1-non-lin-growth}
 \nonumber2& \lan x, b(x)\ran +|\sigma(x)|^{2} + \int_{U} |c(x,u)|^{2} \nu(\d u)        \\
  \nonumber  &    = 2 \sum_{j=1}^{3} x_{j}\bigl(-x_{j}^{1/3}-x_{j}^{3}\bigr)  + \sum_{j=1}^{3} \biggl(\frac12 x_{j}^{4/3} + \frac{2}{9} x_{j}^{4} +\sqrt 2 x_{j}^{2/3}+ 1\biggr) + \int_{U} \gamma^{2} |u|^{2} \sum_{j=1}^{3} x_{j}^{4/3} \nu(\d u)\\
    & = - \frac{16}{9}  \sum_{j=1}^{3} x_{j}^{4} - \sum_{j=1}^{3} x_{j}^{4/3} + \sqrt 2 \sum_{j=1}^{3} x_{j}^{2/3}  + 3. 
\end{align} This verifies Assumption \ref{assumption-non-linear-growth-sec-2}.  For the verification of Assumption \ref{assume-non-lip-sec-2}, we compute
\begin{align}\label{eq2-ex1-non-Lip}
 \nonumber 2& \lan x-y, b(x)-b(y)\ran +|\sigma(x)-\sigma(y)|^{2} + \int_{U} |c(x,u)-c(y,u)|^{2} \nu(\d u)          \\
 \nonumber   & = -2  \sum_{j=1}^{3} (x_{j}- y_{j})(x_{j}^{1/3} - y_{j}^{1/3} + x_{j}^{3} - y_{j}^{3}) +\frac12  \sum_{j=1}^{3}(x_{j}^{2/3} - y_{j}^{2/3})^{2} \\  \nonumber & \qquad + \frac29\sum_{j=1}^{3}(x_{j}^{2}- y_{j}^{2})^{2} + \int_{U} \sum_{j=1}^{3} \gamma^{2} (x_{j}^{2/3} - y_{j}^{2/3})^{2} |u|^{2} \nu(\d u)\\
    & = - \frac{16}{9}\sum_{j=1}^{3}  (x_{j}- y_{j})^{2}\biggl[\biggl(x_{j}+\frac{7}{16}y_{j}\biggr)^{2} + \frac{207}{256} y_{j}^{2} \biggr]- \sum_{j=1}^{3}\bigl(x_{j}^{1/3}- y_{j}^{1/3}\bigr)^{2} \bigl(x_{j}^{2/3}+y_{j}^{2/3}\bigr).
    \end{align} Obviously  this verifies Assumption \ref{assume-non-lip-sec-2}.
\end{exm}

\section{Non Confluence Property}\label{sect-non confluence}
\begin{thm}\label{thm-non confluence}
Assume the conditions of Theorem \ref{thm-existence+uniquness}. In addition, suppose
\begin{equation}
\label{eq-jump neq 0 condition}\begin{aligned}
\text{for }\nu\text{-almost all }u, \text{ the function } x\mapsto x+ c(x,u) \text{ is one-to-one}.
\end{aligned}\end{equation}  Moreover, assume that there exist  
a nondecreasing and concave function $\psi: [0,\infty) \to [0,\infty)$ that vanishes only at $r=0$, and a   $C^{2} $ function $V:(0,\infty)\mapsto (0,\infty)$ satisfying
\begin{enumerate}
  \item[(i)]  $V$ is nonincreasing  in a neighborhood of $0$ and $\lim_{r\downarrow 0} V(r) = \infty$,  and
  \item[(ii)]  for all $ |x-z| >0 $, 
  \begin{align}
\label{eq-wdt-LL-V-non confluence}
\nonumber \psi(V(|x-z|)) \ge&\  \frac12  \biggl(V''(|x-z|) - \frac{V'(|x-z|)}{|x-z|} \biggr) \frac{|\lan x-z, \sigma(x)- \sigma(z)\ran|^{2}}{|x-z|^{2}}
 \\& +  \frac{V'(|x-z|)}{2|x-z|} \bigl(2 \lan x -z, b(x)-b(z)\ran + |\sigma(x) -\sigma(z)|^{2} \bigr)   \\
  \nonumber   &+  \int_{U} \bigg[ V(|x-z+c(x,u)- c(z,u)|) - V(|x-z|) \\
  \nonumber   & \qquad \qquad - \frac{V'(|x-z|)}{|x-z|}\lan x-z, c(x,u)-c(z,u)\ran\biggr]\nu(\d u).
\end{align}
\end{enumerate} 
Then the non confluence property for  \eqref{eq-sde-sec-2}  holds:
\begin{equation}
\label{eq-non confluence}
\text{If }\wdt x\neq x, \text{ then }\P\{X^{\wdt x}(t) \neq X^{x}(t) \text{ for all } t\ge0 \} =1,
\end{equation}   where $X^{\wdt x}$ and $X^{x}$ denote the solutions to \eqref{eq-sde-sec-2} with initial conditions $\wdt x$ and $ x$, respectively.
\end{thm}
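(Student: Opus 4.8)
The plan is to control the ``distance'' process $Z(t):=X^{\wdt x}(t)-X^{x}(t)$ by applying It\^o's formula to $V(|Z(t)|)$ and running a Bihari comparison — which is exactly the shape the right‑hand side of \eqref{eq-wdt-LL-V-non confluence} is built to make possible — in the spirit of the diffusion‑case arguments of \cite{FangZ-05,LanWu-14}, with the jump contribution handled by \eqref{eq-jump neq 0 condition}. First I would set $Z=X^{\wdt x}-X^{x}$, so $Z(0)=\wdt x-x\neq 0$, and let $\tau_{0}:=\inf\{t\ge 0:Z(t)=0\text{ or }Z(t-)=0\}$; the assertion \eqref{eq-non confluence} is precisely $\P(\tau_{0}=\infty)=1$. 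The single place where \eqref{eq-jump neq 0 condition} enters is the observation that $Z$ cannot reach $0$ by a jump: if $Z(t-)\neq 0$ then $X^{\wdt x}(t-)\neq X^{x}(t-)$, and by injectivity of $y\mapsto y+c(y,u)$ the post‑jump value $Z(t)=Z(t-)+[c(X^{\wdt x}(t-),u)-c(X^{x}(t-),u)]$ is nonzero. Consequently, on $\{\tau_{0}<\infty\}$ the path $Z$ approaches $0$ continuously, so $V(|Z(s)|)\uparrow\infty$ as $s\uparrow\tau_{0}$ by (i), and all jump destinations of $Z$ before $\tau_{0}$ stay in $\R^{d}\setminus\{0\}$, the open set on which $z\mapsto V(|z|)$ is $C^{2}$ and the integrand in \eqref{eq-wdt-LL-V-non confluence} is well defined.

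Next I would localize. For small $\e>0$ and $n\in\N$ with $\e<|Z(0)|<2n$ put $\alpha_{\e}:=\inf\{t:|Z(t)|\le \e\}$, $\beta_{n}:=\inf\{t:|X^{\wdt x}(t)|\vee|X^{x}(t)|\ge n\}$ and $\theta:=\alpha_{\e}\wedge\beta_{n}$; by \thmref{thm-existence+uniquness} the solutions do not explode, so $\beta_{n}\uparrow\infty$ a.s., while $\alpha_{\e}\le\tau_{0}$ always. On $[0,\theta)$ one has $\e<|Z(s)|<2n$, so $V,V',V''$ at $|Z(s)|$ are bounded; with $g(z):=V(|z|)$ one computes $Dg(z)=\frac{V'(|z|)}{|z|}z$ and $D^{2}g(z)=\frac{V'(|z|)}{|z|}I+\bigl(V''(|z|)-\frac{V'(|z|)}{|z|}\bigr)\frac{zz^{T}}{|z|^{2}}$. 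Feeding this into It\^o's formula for $g(Z(s))$ on $[0,t\wedge\theta]$ — after an auxiliary localization $\rho_{k}\uparrow\infty$ turning the $\d W$‑ and $\wdt N$‑integrals into true martingales, then letting $k\to\infty$ with Fatou — and taking expectations gives
\begin{equation*}
\E\bigl[V(|Z(t\wedge\theta)|)\bigr]\le V(|\wdt x-x|)+\E\int_{0}^{t\wedge\theta}\mathcal G\bigl(X^{\wdt x}(s),X^{x}(s)\bigr)\,\d s,
\end{equation*}
where $\mathcal G(x,z)$ is precisely the right‑hand side of \eqref{eq-wdt-LL-V-non confluence}: the drift and second‑order terms of It\^o's formula combine into its first two lines, and the compensator of the jump integral is its last term. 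By (ii), $\mathcal G(X^{\wdt x}(s),X^{x}(s))\le\psi(V(|Z(s)|))$, a nonnegative quantity; since $Z(s\wedge\theta)=Z(s)$ on $\{s<\theta\}$ and $\psi,V\ge 0$, this yields $\E[V(|Z(t\wedge\theta)|)]\le V(|\wdt x-x|)+\int_{0}^{t}\E[\psi(V(|Z(s\wedge\theta)|))]\,\d s$.

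To finish, set $h(t):=\E[V(|Z(t\wedge\theta)|)]$. Jensen's inequality (concavity of $\psi$) gives $\E[\psi(V(|Z(s\wedge\theta)|))]\le\psi(h(s))$, hence $h(t)\le V(|\wdt x-x|)+\int_{0}^{t}\psi(h(s))\,\d s$; since $\psi$ is concave, nondecreasing and vanishes only at $0$, we have $\psi(r)\le\psi(1)r$ for $r\ge 1$, so $\int^{\infty}\d r/\psi(r)=\infty$ and Bihari's inequality bounds $h(t)$ by some $C(t)<\infty$ that is independent of $\e$ and $n$. On the other hand, on $\{\tau_{0}\le t,\ \beta_{n}>t\}$ one has $t\wedge\theta=\alpha_{\e}$ with $0<|Z(\alpha_{\e})|\le\e$ (positivity by the jump observation above), so, choosing $\e$ small enough that $V$ is nonincreasing on $(0,\e]$, $V(|Z(t\wedge\theta)|)\ge V(\e)$ there and hence $V(\e)\,\P(\tau_{0}\le t,\ \beta_{n}>t)\le h(t)\le C(t)$. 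Letting $\e\downarrow 0$ (so $V(\e)\to\infty$) gives $\P(\tau_{0}\le t,\ \beta_{n}>t)=0$, then $n\to\infty$ gives $\P(\tau_{0}\le t)=0$, then $t\to\infty$ gives $\P(\tau_{0}<\infty)=0$, which is \eqref{eq-non confluence}.

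The main obstacle is the It\^o step: $g(z)=V(|z|)$ is only $C^{2}$ off the origin, where $V$ blows up, so justifying the formula, the finiteness of all the expectations involved, and the martingale property of the stochastic integrals rest on carefully nesting the stopping times $\alpha_{\e}$, $\beta_{n}$, $\rho_{k}$, together with the fact — supplied by \eqref{eq-jump neq 0 condition} — that the jumps of $Z$ never land at $0$; keeping track of these stopping times, and of the integrability of the jump term in \eqref{eq-wdt-LL-V-non confluence} along the path up to $\theta$, is where most of the care is needed.
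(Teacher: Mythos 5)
Your proposal is correct and follows essentially the same route as the paper's proof: the same localization via three stopping times (a small-$|Z|$ hitting time playing the role of the paper's $T_{1/n}$, a large-ball exit time, and the confluence time), It\^o's formula for $V(|\Delta|)$ combined with condition (ii) and Jensen's inequality with the concave $\psi$, a Bihari comparison to get a finite bound independent of the localizing parameters, and then the lower bound $V(\e)\,\P(\cdot)\le C(t)$ followed by $\e\downarrow 0$, $n\to\infty$, $t\to\infty$. The only cosmetic deviations are the order in which you release the two inner localizations and your more explicit spelling-out of the role of \eqref{eq-jump neq 0 condition} and of the auxiliary $\rho_k$-truncation, which the paper leaves implicit.
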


\begin{rem}
Note that \eqref{eq-jump neq 0 condition} prevents  the process $  X^{\wdt x}(t)  - X^{x}(t)$ from  jumping  to $0$ from a nonzero location.
 Also, by   It\^o's formula, the right hand side of \eqref{eq-wdt-LL-V-non confluence} is the extended generator $\wdt \LL$ of the process $X^{\wdt x} - X^{x} $ applied to the function $(x-z)\mapsto V(|x-z|)$; see \cite{MeynT-93III} for the definition of the extended generator. We can also regard $\wdt \LL$ as the {\em basic coupling operator} of $\LL$ of \eqref{eq-L-operator-sect-2}; see Section \ref{sect-Feller} for more details.
\end{rem}

\begin{proof}[Proof of Theorem~\ref{thm-non confluence}]
Let $\wdt X(t)= X^{\wdt x}(t)$, $X(t) =X^{x}(t)$ and denote $\Delta_{t}: = \wdt X(t) - X(t)$    as in the proof of Theorem \ref{thm1-path-uniqueness-sec-2}. In addition, assume that  $|\Delta_{0}| = |\wdt x - x|> 0$. For all $\N\ni n >  \frac{1}{|\Delta_{0}|} $ and $R > |\wdt x | \vee |x| $, define
\begin{align*}
 & T_{{1}/{n}}  : = \inf\bigl\{ t\ge 0: |\Delta_{t}| \le {1}/{n}\bigr\},
 \text{ and } \tau_{R}  : =  \inf\{ t\ge 0: |\wdt X(t)| \vee |X(t) |> R\}.  \end{align*}
 Put $ T_{0} : =\inf\{ t\ge 0: |\Delta_{t}| =0\}$. Then we have $ T_{0} = \lim_{n\to \infty} T_{{1}/{n}}$ and $\lim_{R \to \infty}\tau_{R} = \infty$ a.s.
Applying It\^o's formula to the process
$V(|\Delta_{\cdot \wedge \tau_{R} \wedge T_{{1}/{n}}  }|)$
and using \eqref{eq-wdt-LL-V-non confluence}, we have
\begin{align*}
\E  [ V (|\Delta_{t\wedge \tau_{R} \wedge T_{{1}/{n}}  } |)]
   & = V(|\Delta _{0} |) + \E\biggl[\int_{0}^{t\wedge \tau_{R}\wedge T_{{1}/{n}}  } \wdt \LL V(|\wdt X(s) - X(s)|) \d s\biggr]\\
  & \le V(|\Delta _{0} |) + \E\biggl[\int_{0}^{t\wedge \tau_{R}\wedge T_{{1}/{n}}  }  \psi ( V(|\Delta_{s}|)) \d s\biggr]    \\
   &\le V(|\Delta _{0} |)  +  \E\biggl[\int_{0}^{t}  \psi (V(|\Delta_{s\wedge \tau_{R}\wedge T_{{1}/{n}}  }|)) \d s\biggr] \\
  & \le  V(|\Delta _{0} |)  +\int_{0}^{t}  \psi \big(\E[ V(|\Delta_{s\wedge \tau_{R}\wedge T_{{1}/{n}}  }|)]\big) \d s,
\end{align*} 
where we used the concavity of $\psi$ and Jensen's inequality to obtain the last inequality.
Denote $u(t) : =\E  [ V (|\Delta_{t\wedge \tau_{R} \wedge T_{{1}/{n}}  } |)] $. Then $u $ satisfies
$0 \le u(t) \le  V(|\Delta _{0} |)  + \int_{0}^{t} \psi(u(s))\d s. $
We can use a similar  argument as that  in the end of the proof of Theorem \ref{thm1-path-uniqueness-sec-2} to show that
\begin{equation}\label{eq-u(t,R)-con confluence}
0 \le u(t) =\E  [ V (|\Delta_{t\wedge \tau_{R} \wedge T_{{1}/{n}}  } |)]  \le G^{-1}(G(V(|\Delta _{0} |) ) + t),
\end{equation} in which $G(r) : =  \int_{1}^{r} \frac{\d s}{\psi(s)} $, $r\in [0,\infty)$ and $G^{-1}(y): =  \inf\{s \ge 0: G(s) > y \}$ for $y\in \R$.
   Note that since $\psi$ is nonnegative, both $G$ and $G^{-1}$ are nondecreasing. In addition, since $\infty > V(|\Delta _{0} |) > 0$, we have $\infty > G(V(|\Delta _{0} |) ) + t > -\infty$ and hence $G^{-1}(G(V(|\Delta _{0} |) ) + t)\ge 0$ is finite. Now letting $R\to \infty$ in \eqref{eq-u(t,R)-con confluence}, we obtain from    Fatou's lemma that   $$0 \le \E  [ V (|\Delta_{t  \wedge T_{{1}/{n}}  } |)]  \le G^{-1}(G(V(|\Delta _{0} |) ) + t).$$ Furthermore, on the set $\{T_{{1}/{n}} < t  \}$, $|\Delta_{t  \wedge T_{{1}/{n}}  } | \le {1}/{n}$. Thus it follows from  condition (i)   that \begin{align*}
   V\big({1}/{n}\big) \P\{T_{{1}/{n}} < t  \}& \le   \E  \big[ V (|\Delta_{t  \wedge T_{{1}/{n}}  } |)I_{\{T_{{1}/{n}} < t  \}}\big]
       \le   \E  [ V (|\Delta_{t  \wedge T_{{1}/{n}}  } |)]  \le G^{-1}(G(V(|\Delta _{0} |) ) + t).
    \end{align*} Rewrite the above inequality as \begin{displaymath}
\P\{T_{{1}/{n}} < t   \} \le \frac{G^{-1}(G(V(|\Delta _{0} |) ) + t)}{ V ({1}/{n} )}
\end{displaymath} Now passing to the limit as $n \to \infty$, we obtain from condition (i) that $\P\{T_{0} < t  \} =0$.  This is true for any $t \ge 0$ so letting $t \to \infty $, we obtain $\P\{T_{0} <  \infty\} =0$.  In other words, $|\Delta_{t}|$ is positive on the interval $[0, \infty) $  a.s.
This completes the proof.
\end{proof}

Theorem  \ref{thm-non confluence} presents sufficient condition for non confluence in terms of the existence of a certain Lyapunov function. Often,  it is not an easy task to find such a Lyapunov function. The following corollary indicates that as long as the coefficients of \eqref{eq-sde-sec-2} is Lipschitz, then the non confluence property holds.
\begin{cor}\label{cor-non confluence} Suppose  Assumption \ref{assumption-non-linear-growth-sec-2} 
and that there exists a $\delta> 0$ such that
\begin{equation}
\label{eq2-jump neq 0 condition}\begin{aligned}
\nu\big\{u\in U: &\text{ there exist } x,z\in \R^{d} \text{ such that } x- z \neq 0\\   &\qquad\text{ but } |x-z + c(x,u)-c(z,u)|   \le \delta|x-z|\big\} =0.
\end{aligned}\end{equation}
Assume the coefficients of \eqref{eq-sde-sec-2} satisfy for some positive constant $K$ that
\begin{equation}
\label{eq-Lip for non confluence}\begin{aligned}
2& |\lan x-z, b(x)-b(z)\ran| + |\sigma(x) - \sigma(z)|^{2}   \\ & + \int_{U} \big[  |c(x,u)- c(z,u)|^{2}+ | (x-z) \cdot (c(x,u)- c(z,u)) |\big] \nu(\d u) \le K |x-z|^{2},
\end{aligned}\end{equation} for all $x,z\in \R^{d}$. 
Then the non confluence property for  \eqref{eq-sde-sec-2}  holds.
\end{cor}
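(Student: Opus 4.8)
The plan is to deduce this from Theorem~\ref{thm-non confluence} by exhibiting an explicit Lyapunov pair $(\psi, V)$ that works under the Lipschitz hypothesis \eqref{eq-Lip for non confluence} together with the quantitative non-collapse condition \eqref{eq2-jump neq 0 condition}. The natural candidate, following the classical non confluence arguments for diffusions (e.g.\ \cite{FangZ-05,LanWu-14}), is $V(r) = r^{-p}$ for a suitable small exponent $p>0$, or more flexibly $V(r)=\log(1/r)$ near $0$; either is $C^{2}$ on $(0,\infty)$, positive, nonincreasing near $0$, and blows up as $r\downarrow 0$, so condition (i) of Theorem~\ref{thm-non confluence} is automatic. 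The real work is to check that the right-hand side of \eqref{eq-wdt-LL-V-non confluence} is dominated by $\psi(V(|x-z|))$ for an admissible $\psi$ (nondecreasing, concave, vanishing only at $0$); I expect $\psi(s)=Cs$ for a constant $C$ depending on $K,\delta,p$ to suffice, since \eqref{eq-Lip for non confluence} gives quadratic-in-$|x-z|$ control of every coefficient term.

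First I would write $r=|x-z|$ and estimate the continuous (drift plus diffusion) part of $\wdt\LL V(r)$: with $V(r)=r^{-p}$ one has $V'(r)=-p r^{-p-1}$ and $V''(r)-V'(r)/r = p(p+1)r^{-p-2} + p r^{-p-2}=p(p+2)r^{-p-2}$, and the coefficient factors $|\langle x-z,\sigma(x)-\sigma(z)\rangle|^{2}/r^{2}$ and $2\langle x-z,b(x)-b(z)\rangle+|\sigma(x)-\sigma(z)|^{2}$ are both bounded by a constant times $r^{2}$ by Cauchy--Schwarz and \eqref{eq-Lip for non confluence}; hence this part is bounded by $C_{1} r^{-p} = C_{1} V(r)$. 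Next I would handle the jump integral. Write $\eta(u):=c(x,u)-c(z,u)$ and $w:=x-z$, so the integrand is $V(|w+\eta(u)|)-V(|w|)-\frac{V'(|w|)}{|w|}\langle w,\eta(u)\rangle$. Here \eqref{eq2-jump neq 0 condition} is essential: for $\nu$-a.e.\ $u$ we have $|w+\eta(u)|\ge\delta|w|$, so $V(|w+\eta(u)|)=|w+\eta(u)|^{-p}\le \delta^{-p}|w|^{-p}$ stays finite and in fact controllably so. I would split the region of integration according to whether $|\eta(u)|\le \tfrac12|w|$ or $|\eta(u)|>\tfrac12|w|$: on the first, a second-order Taylor expansion of $r\mapsto r^{-p}$ (valid since $|w+\eta(u)|$ is comparable to $|w|$) bounds the integrand by $C|w|^{-p-2}|\eta(u)|^{2}$; on the second, one bounds each of the three terms separately, using $V(|w+\eta|)\le\delta^{-p}|w|^{-p}$, $V(|w|)=|w|^{-p}$, and $|\frac{V'(|w|)}{|w|}\langle w,\eta\rangle|\le p|w|^{-p-1}|\eta|\le C|w|^{-p-2}|\eta|^{2}$ since $|\eta|>\tfrac12|w|$. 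Integrating against $\nu$ and invoking the $\int_{U}[|c(x,u)-c(z,u)|^{2}+|(x-z)\cdot(c(x,u)-c(z,u))|]\nu(\d u)\le K|x-z|^{2}$ bound in \eqref{eq-Lip for non confluence} then yields a bound $C_{2}|w|^{-p} = C_{2}V(|w|)$ for the jump term as well.

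Combining, the right-hand side of \eqref{eq-wdt-LL-V-non confluence} is at most $(C_{1}+C_{2})V(|x-z|)$, so taking $\psi(s)=(C_{1}+C_{2})s$ — which is nondecreasing, concave, and vanishes only at $0$ — verifies condition (ii), and \eqref{eq-jump neq 0 condition} follows from \eqref{eq2-jump neq 0 condition} with $\delta$ there playing a role stronger than mere injectivity. Theorem~\ref{thm-non confluence} then gives the non confluence property. The main obstacle I anticipate is the jump-integral estimate in the regime where $|\eta(u)|$ is not small compared to $|w|$: one must use \eqref{eq2-jump neq 0 condition} to keep $V(|w+\eta(u)|)$ bounded by $\delta^{-p}V(|w|)$ and be careful that the exponent $p$ (if one uses $V(r)=r^{-p}$) does not need to be taken small — in contrast to the diffusion-only case, where $p$ small is used to beat the $V''$ term — because here \eqref{eq-Lip for non confluence} already provides the full quadratic control; nonetheless one should double-check that no term forces a constraint that \eqref{eq2-jump neq 0 condition} cannot absorb, and if $V(r)=r^{-p}$ proves delicate, fall back to $V(r)=\log(1/r)$ near $0$ (extended to a bounded $C^{2}$ function away from $0$), for which $V''(r)-V'(r)/r = 1/r^{2}+1/r^{2}=2/r^{2}$ and the same splitting argument applies verbatim.
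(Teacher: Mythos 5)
Your proof is correct and takes a genuinely different route from the paper's. The paper commits to the specific choice $V(r)=r^{-2}$ and establishes the jump estimate via an explicit three-case algebraic computation (the inequality~\eqref{eq-|x|-2-estimate}), splitting according to the sign of $x\cdot y$ and the sign of $2x\cdot y+|y|^2$; in the third case the computation produces a bound of the form $|x\cdot y|/|x|^4$ rather than $|y|^2/|x|^4$, and it is precisely this that forces the cross term $|(x-z)\cdot(c(x,u)-c(z,u))|$ into the integral hypothesis~\eqref{eq-Lip for non confluence}. Your argument replaces this with a Taylor-plus-splitting analysis: on $\{|\eta(u)|\le|w|/2\}$ the second-order Taylor remainder of the map $y\mapsto|y|^{-p}$ on $\R^d$ around $w$ is of order $|w|^{-p-2}|\eta|^2$ (since $|w+\theta\eta|\ge|w|/2$ uniformly in $\theta$ and the Hessian of $|y|^{-p}$ has largest eigenvalue $p(p+1)|y|^{-p-2}$), and on $\{|\eta(u)|>|w|/2\}$ each of the three terms is separately of order $|w|^{-p-2}|\eta|^2$ once one uses $|w|^{-p}\le 4|w|^{-p-2}|\eta|^2$ together with the lower bound from~\eqref{eq2-jump neq 0 condition}. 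This estimate is tighter than the paper's: it invokes only the $|c(x,u)-c(z,u)|^2$ part of~\eqref{eq-Lip for non confluence}, so the cross term in that hypothesis is in fact never needed -- a point the paper addresses only in the restricted setting of Remark~\ref{rem-35 about jump condition}, whereas your argument shows it unconditionally. Your approach is also more flexible in the choice of $V$ (any $V(r)=r^{-p}$, $p>0$, works with $\psi(s)=Cs$), while the paper's direct algebraic manipulation is tailored to $p=2$. The one thing to tighten in the write-up is the passage ``a second-order Taylor expansion of $r\mapsto r^{-p}$'': what is actually being expanded is the $d$-dimensional function $y\mapsto|y|^{-p}$, and the remainder bound comes from an operator-norm estimate on its Hessian, not from a one-dimensional expansion in $r=|w+\theta\eta|$ (which is not differentiable through $0$ and whose remainder would need a separate argument). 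Once that is made explicit the proof is complete.
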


\begin{proof} Apparently \eqref{eq-Lip for non confluence} verifies  Assumption \ref{assume-non-lip-sec-2}. This, together with Assumption \ref{assumption-non-linear-growth-sec-2},  implies that  \eqref{eq-sde-sec-2} has a unique strong non-exploding solution $X^{x}$ for any initial condition $x\in \R^{d}$. Note also that \eqref{eq2-jump neq 0 condition} implies \eqref{eq-jump neq 0 condition}. The remaining proof is to find a smooth function $V$ satisfying the conditions of Theorem \ref{thm-non confluence}.

Consider the function $V(r): = r^{-2}$ for $ r > 0$. Of course $V$ satisfies condition (i) of Theorem \ref{thm-non confluence}. It remains to verify condition (ii).
 To this end, let us first prove that for all $x,y\in \R^{n}$ with $x\neq 0$ and  $|x+y| \ge \delta |x|, $ where $\delta> 0$ is some constant,  we have
\begin{equation}
\label{eq-|x|-2-estimate}
V(|x+y|) - V(|x|) - DV(|x|)\cdot y =   \frac{1}{|x+y|^{2}} - \frac{1}{|x|^{2}} + \frac{2  x\cdot y}{|x|^{4}} \le K \frac{ |y|^{2}\vee | x \cdot y |}{|x|^{4}},
\end{equation} in which $K$ is a positive constant.
Let us    prove \eqref{eq-|x|-2-estimate} in three cases:

{\bf Case 1: $x\cdot y \ge 0$}. In this case, it is easy to verify that for any $\theta\in [0,1]$, we have $|x+ \theta y |^{2} = |x|^{2} + 2 \theta x\cdot y + \theta^{2}|y|^{2} \ge |x|^{2}$. Therefore we can use  the Taylor expansion with integral reminder to compute
\begin{align*}
     \frac{1}{|x+y|^{2}} - \frac{1}{|x|^{2}} + \frac{2  x\cdot y}{|x|^{4}} &  = \int_{0}^{1} \frac12 y\cdot D^{2} V(x+ \theta y) y \, \d \theta \\
    &   =   \int_{0}^{1} \biggl[- \frac{|y|^{2}}{|x+ \theta y|^{4}}+ 2\frac{y^{T}(x+ \theta y)(x+ \theta y)^{T} y}{|x+ \theta y |^{6}} \biggr]\d \theta\\
     & \le 2\int_{0}^{1} \frac{|y|^{2}}{|x+ \theta y|^{4}} \d \theta \le 2\int_{0}^{1} \frac{|y|^{2}}{|x|^{4}} \d \theta =  \frac{2 |y|^{2}}{|x|^{4}}.
\end{align*}

{\bf Case 2: $x\cdot y  <  0$ and $2x\cdot y + |y|^{2} \ge 0$}.  In this case, we have $|x+y|^{2} = |x|^{2} + 2x\cdot y+ |y|^{2} \ge |x|^{2}$ and hence $|x+y|^{-2} - |x|^{-2} \le 0$; which together with $x\cdot y \le 0$ implies that $|x+y|^{-2} - |x|^{-2} + 2 |x|^{-4} x\cdot y \le 0.$

{\bf Case 3: $x\cdot y  <  0$ and $2x\cdot y + |y|^{2} <  0$}.  In this case, we use the bound   $|x+y|^{2}  \ge \delta^{2}|x|^{2}$  to compute
\begin{align*}
   \frac{1}{|x+y|^{2}} - \frac{1}{|x|^{2}} + \frac{2  x\cdot y}{|x|^{4}} &  =   \frac{|x|^{2} - |x+y|^{2}}{|x|^{2} |x+y|^{2}}  + \frac{2 x\cdot y}{|x|^{4}}
   \hspace{-9pt}& & = -\frac{|y|^{2}}{|x|^{2} |x+y|^{2}} -\frac{2 x\cdot y}{|x|^{2} |x+y|^{2}} +  \frac{2 x\cdot y}{|x|^{4}} \\
    & \le  -\frac{2 x\cdot y}{|x|^{2} |x+y|^{2}} && \le  -\frac{2 x\cdot y}{\delta^{2}|x|^{4}}.
\end{align*}

  Combining the three cases gives \eqref{eq-|x|-2-estimate}.

For all $x\neq z\in \R^{d}$, \eqref{eq2-jump neq 0 condition} implies that $\nu\{u \in U: |x-z+ c(x,u)-c(z,u)| \le \delta|x-z| \}=0 $.   Hence, with  the notations $\lbar A(x,z), B(x,z)$ defined in \eqref{eq-A-bar-notation},  we can use \eqref{eq-Lip for non confluence}  and  \eqref{eq-|x|-2-estimate} to compute
  \begin{align*}
 \wdt \LL V(|x-z|)  & =    \frac12    V''(|x-z|)   \lbar A(x,z)  
   +  \frac{V'(|x-z|)}{2|x-z|} \bigl(2 B(x,z)- \lbar A(x,z) 
 + |\sigma(x) -\sigma(z)|^{2} \bigr)   \\
 & \qquad + \int_{U}      \biggl[V(|x-z+ c(x,u)-c(z,u)|) - V(|x-z|) \\ & \qquad -\frac{V'(|x-z|)}{|x-z|} \lan x-z, c(x,u)-c(z,u)\ran \biggr] \nu (\d u)  \\
 & \le  \frac{4}{|x-z|^{4}} \cdot \frac{|x-z|^{2}| \sigma(x)- \sigma(z)|^{2}}{|x-z|^{2}} +  \frac{1}{|x-z|^{4}}2 \big|\lan x -z, b(x)-b(z)\ran\big|  
 \\
 & \qquad +K \int_{U}  \frac{| c(x,u)-c(z,u)|^{2}\vee \big| \lan x-z,  c(x,u)-c(z,u) \ran\big|}{|x-z|^{4}}\nu (\d u) \\
 & \le  K |x-z|^{-2},
\end{align*} where $K$ is some positive constant. This verifies condition (ii) of Theorem \ref{thm-non confluence} and hence finishes the proof of the corollary.
\end{proof}
\begin{rem}\label{rem-35 about jump condition}
Assume that either  $$
     \lan x-z, c(x,u)-c(z,u)\ran \ge 0, $$ or $$ \lan x-z, c(x,u)-c(z,u)\ran <0 \text{ and }2  \lan x-z, c(x,u)-c(z,u)\ran + |c(x,u)-c(z,u)|^{2} \ge 0,$$    for all $x,z\in\R^{d}$   and $u \in U$. Then \eqref{eq2-jump neq 0 condition} is automatically satisfied and moreover,  the integrand of the integral term in \eqref{eq-Lip for non confluence} can be replaced by $|c(x,u)-c(z,u)|^{2} $. This is clear from Cases 1 and 2 for the proof of \eqref{eq-|x|-2-estimate}. \end{rem}

\section{Feller Property}\label{sect-Feller}

\begin{Assumption}\label{Assumption-martingale-well-posed}
For any initial condition $x\in \R^{d}$, the stochastic differential equation \eqref{eq-sde-sec-2} has a non-exploding weak solution $X^{x}$ and the solution is unique in the sense of probability law.
\end{Assumption}

Under Assumption \ref{Assumption-martingale-well-posed}, we can define the semigroup $P_{t} f(x):= \E_{x}[f(X(t))] = \E[f(X^{x}(t))]$ for $f\in \B_{b}(\R^{d})$ and $t \ge 0$, where $X^{x} $ denotes the unique weak solution of \eqref{eq-sde-sec-2} with initial condition $X^{x}(0) =x \in\R^{d}$.

We have the following result:

\begin{prop}\label{prop-Feller-old}
Suppose that Assumption \ref{Assumption-martingale-well-posed} and either Assumption \ref{assumption1-non-lip-sec-2} or Assumption \ref{assume-non-lip-sec-2} hold, then the process $X$ is Feller continuous.
\end{prop}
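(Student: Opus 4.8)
The plan is to establish Feller continuity via a coupling / estimate argument on the difference of two solutions started from nearby initial points, in the same spirit as the proofs of Theorems~\ref{thm1-path-uniqueness-sec-2} and \ref{thm-path-uniqueness-sec-2}. Precisely, fix $x\in\R^{d}$ and a sequence $x_{n}\to x$; let $X^{x_{n}}$ and $X^{x}$ be the (pathwise unique, hence strong) solutions of \eqref{eq-sde-sec-2} driven by the \emph{same} Brownian motion $W$ and the \emph{same} Poisson random measure $N$, with $X^{x_{n}}(0)=x_{n}$, $X^{x}(0)=x$. Set $\Delta_{t}:=X^{x_{n}}(t)-X^{x}(t)$. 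It suffices to show that for each fixed $t\ge0$ and each bounded continuous $f$, $\E[f(X^{x_{n}}(t))]\to\E[f(X^{x}(t))]$; by dominated convergence this follows once we show $X^{x_{n}}(t)\to X^{x}(t)$ in probability, and for that it is enough to show $\E[|\Delta_{t\wedge\tau_{R}}|\wedge1]\to0$ as $n\to\infty$ for each fixed $R$, together with a tightness-type control $\sup_{n}\P\{\tau_{R}<t\}\to0$ as $R\to\infty$ coming from the non-explosion bound in Theorem~\ref{thm-no-explosion-sec-2} (the Lyapunov function $\Phi$ gives a uniform-in-$n$ moment bound since the $x_{n}$ are bounded).

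The core step is the small-time estimate on $\E[|\Delta_{t\wedge\tau_{R}}|]$ (under Assumption~\ref{assumption1-non-lip-sec-2}) or on $\E[H(\Delta_{t\wedge\tau_{R}})]$ for the single smooth function $H$ used in Theorem~\ref{thm-path-uniqueness-sec-2} (under Assumption~\ref{assume-non-lip-sec-2}). In either case I would reuse verbatim the machinery from the Appendix: apply It\^o's formula to $\psi_{m}(|\Delta_{t\wedge\tau_{R}}|)$ (resp. $H(\Delta_{t\wedge\tau_{R}})$), use the local non-Lipschitz bounds \eqref{eq-coeffs-non-lip-sec-2}--\eqref{eq-integral-term-non-lip-sec-2} (resp. \eqref{eq-Feller-condition}) which are valid on $\{|X^{x_{n}}|\vee|X^{x}|\le R\}$ and for $|\Delta|\le\delta_{0}$, pass $m\to\infty$, invoke concavity of $\rho$ (resp. $\varrho$) and Jensen's inequality to obtain an integral inequality $u_{n}(t)\le u_{n}(0)+C_{R}\int_{0}^{t}\rho(u_{n}(s))\,\d s$ where $u_{n}(t)=\E[|\Delta_{t\wedge\tau_{R}}|]$ and $u_{n}(0)=|x_{n}-x|\to0$. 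By Bihari's inequality $u_{n}(t)\le G_{R}^{-1}(G_{R}(u_{n}(0))+C_{R}t)$, and since $\int_{0+}\d r/\rho(r)=\infty$ we have $G_{R}(0^{+})=-\infty$, hence $G_{R}^{-1}(G_{R}(u_{n}(0))+C_{R}t)\to0$ as $u_{n}(0)\to0$; this gives $\sup_{s\le t}u_{n}(s)\to0$. One subtlety: the bound \eqref{eq-coeffs-non-lip-sec-2} only holds for $|\Delta|\le\delta_{0}$, so strictly one should work up to $\tau_{R}\wedge S_{\delta_{0}}$ where $S_{\delta_{0}}:=\inf\{t:|\Delta_{t}|\ge\delta_{0}\}$, and then argue — since $|\Delta_{0}|=|x_{n}-x|<\delta_{0}$ for large $n$ and $\Delta$ has only small jumps on the relevant set via \eqref{eq-integral-term-non-lip-sec-2} (or control the jump overshoot) — that $\P\{S_{\delta_{0}}<t\}\to0$; this is exactly the bootstrapping already carried out in the proof of Theorem~\ref{thm1-path-uniqueness-sec-2} and can be cited.

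Putting the pieces together: given $\e>0$, choose $R$ so large that $\sup_{n}\P\{\tau_{R}<t\}<\e$ (uniform in $n$ by the Lyapunov bound, since $\{x_{n}\}$ is bounded), then choose $N$ so that for $n\ge N$ both $\P\{S_{\delta_{0}}<t\wedge\tau_{R}\}<\e$ and $\E[|\Delta_{t\wedge\tau_{R}\wedge S_{\delta_{0}}}|]<\e\delta_{0}$, whence $\P\{|\Delta_{t}|>\e^{1/2}\}\le3\e+\e^{1/2}$ for $n\ge N$; this proves $X^{x_{n}}(t)\to X^{x}(t)$ in probability, and then $P_{t}f(x_{n})=\E[f(X^{x_{n}}(t))]\to\E[f(X^{x}(t))]=P_{t}f(x)$ by bounded convergence, so $P_{t}f\in C_{b}(\R^{d})$. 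Finally, since Assumption~\ref{Assumption-martingale-well-posed} guarantees $P_{t}f$ is well-defined and bounded, the process is Feller continuous.

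The main obstacle I expect is \emph{not} the Bihari/Gronwall estimate itself — that is routine given the Appendix proofs — but rather the careful handling of the two families of stopping times $\tau_{R}$ and $S_{\delta_{0}}$ simultaneously and uniformly in $n$: one must ensure the localization radius $R$ (which controls the constant $\kappa_{R}$) can be fixed \emph{before} sending $n\to\infty$, that the probability of leaving the $\delta_{0}$-neighborhood of the diagonal before time $t$ is genuinely small for large $n$ (this uses that the initial separation $|x_{n}-x|$ shrinks and the local modulus-of-continuity bounds prevent fast growth of $|\Delta|$, possibly together with \eqref{eq-jump neq 0 condition}-type control on jump overshoot, or simply absorbing the jump contribution into the estimate), and that all the moment/Fatou passages are justified. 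Once these bookkeeping issues are dispatched along the lines already established in Section~\ref{sect-non-explosion-pathwise} and the Appendix, the Feller property follows.
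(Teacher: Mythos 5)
Your proposal is correct and follows essentially the same route as the paper: it relies on the small-time estimate $\E[|\Delta_{t\wedge\tau_R\wedge S_{\delta_0}}|]\to 0$ already established in the proof of the pathwise-uniqueness theorem (equation \eqref{eq-u(t) to 0}), bounds $\P\{S_{\delta_0}\le t\wedge\tau_R\}$ by a Markov-type inequality using that $|\Delta|\ge\delta_0$ at time $S_{\delta_0}$, fixes $R$ large to control $\P\{\tau_R<t\}$, deduces convergence in probability of $X^{x_n}(t)$ to $X^x(t)$, and concludes via the mapping theorem and bounded convergence. The one point where you go slightly beyond the paper's wording is in insisting that $R$ be chosen \emph{uniformly} over the approximating initial conditions before letting $n\to\infty$; the paper simply says ``choose $R$ so that $\P(\tau_R<t)<\epsilon$'' and then passes $\wdt x\to x$, which is correct but leaves this uniformity implicit — your remark that it follows from the non-explosion Lyapunov bound, since $\{x_n\}$ is bounded, is exactly the right justification.
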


\begin{proof} 
 Let Assumptions \ref{Assumption-martingale-well-posed} and  \ref{assumption1-non-lip-sec-2} hold and use the same notations as in the proof of Theorem \ref{thm1-path-uniqueness-sec-2}.  The end of
  the proof of Theorem \ref{thm1-path-uniqueness-sec-2} (cf. \eqref{eq-u(t) to 0}) reveals  that for any $R > 0$
 \begin{equation}
\label{eq-Delta_t->0 in mean}
 \lim_{|\wdt x-x| \to 0} \E[|\Delta_{t\wedge S_{\delta_{0}}\wedge \tau_{R}}| ] =\lim_{|\wdt x-x| \to 0} \E[|\wdt X(t\wedge S_{\delta_{0}}\wedge \tau_{R}) - X(t\wedge S_{\delta_{0}}\wedge \tau_{R})| ]  =0 \ \ \text{ for all } t \ge 0.
\end{equation} On the set $\{S_{\delta_{0}} \le t\wedge \tau_{R}\}$, we have $|\Delta_{t\wedge S_{\delta_{0}}\wedge \tau_{R}}| \ge \delta_{0}$ and hence
$ \delta_{0} \P\{S_{\delta_{0}} \le t\wedge \tau_{R} \} \le \E[|\Delta_{t\wedge S_{\delta_{0}}\wedge \tau_{R}}| ].$    
 For any $\epsilon > 0$ and  $t \ge 0$, we can choose an $R > 0$ sufficiently large so that $\P(\tau_{R} <  t) < \epsilon$.
For any $\e > 0$,
 we can compute\begin{align*}
  \P&\{|\Delta_{t} | > \e \}  \\& =  \P\{|\Delta_{t} | > \e, \tau_{R} <  t \} +   \P\{|\Delta_{t} | > \e, \tau_{R} \ge  t, S_{\delta_{0}} >  t \} +  \P\{|\Delta_{t} | > \e, \tau_{R} \ge t, S_{\delta_{0}} \le t \}\\
   & \le \epsilon+  \P\{|\Delta_{t\wedge S_{\delta_{0}}\wedge \tau_{R}} | > \e, \tau_{R} \ge  t, S_{\delta_{0}} >  t \} + \P \{ S_{\delta_{0}} \le t \wedge \tau_{R}\}\\
   & \le \epsilon + \P\{|\Delta_{t\wedge S_{\delta_{0}}\wedge \tau_{R}} | > \e\}  + \frac{ \E[|\Delta_{t\wedge S_{\delta_{0}}\wedge \tau_{R}}| ] }{\delta_{0}}\\
   & \le \epsilon + \frac{\E[|\Delta_{t\wedge S_{\delta_{0}}\wedge \tau_{R}}| ]}{\e} + \frac{\E[|\Delta_{t\wedge S_{\delta_{0}}\wedge \tau_{R}}| ]}{\delta_{0}} \\
   &  \to \epsilon + 0,
\end{align*}    as $\wdt x -x \to 0$, where we used \eqref{eq-Delta_t->0 in mean} in the last step. Since $\epsilon > 0$ is arbitrary,   it follows from  that $\Delta_{t}$ converges to 0 in probability as $\wdt x -x \to 0$.

Recall that $\Delta_{t} = \wdt X(t) - X(t)$, in which $\wdt X$ and $X$ denote the solutions to \eqref{eq-sde-sec-2} with initial conditions $\wdt x$ and $x$, respectively. Thus we see that $\wdt X(t)$ converges to $X(t)$ in probability as $\wdt x \to x $.  For any $f \in C_{b}(\R^{d})$, the  mapping theorem (see, e.g., Theorem 2.7 of \cite{Billinsley-99}) implies that $f(\wdt X(t))$ converges weakly to $f(X(t))$ as $\wdt x \to x $. The bounded convergence theorem further implies that $\E[f(\wdt X(t))] \to \E[f(X(t))]$ as $\wdt x \to x$. The    Feller continuity therefore follows.

 Similar argument leads to the Feller continuity under Assumptions  \ref{Assumption-martingale-well-posed} and \ref{assume-non-lip-sec-2} as well.
\end{proof}

 Assumptions \ref{assumption1-non-lip-sec-2} and \ref{assume-non-lip-sec-2}   impose continuity conditions   on $\int_{U} |c(x,u)- c(z,u)|\nu(\d u)$ and  $\int_{U} |c(x,u)-c(z,u)|^{2}\nu(\d u)$, respectively.  These conditions are sometimes restrictive for the function $c$ and the L\'evy measure $\nu$.  For example, suppose $U= \R_{0}^{d}$, $\nu(\d u) = \frac{\d u}{|u|^{d+ \alpha}}$, in which $\alpha\in (1,2)$, and $c(x,u) =c(x) u$ with $c(x)\in \R^{d\times d}$ being a non-constant matrix. In such a case, we have $c(x,u)- c(z,u)= (c(x)-c(z))u$ and thus both $\int_{U} |c(x,u)- c(z,u)|\nu(\d u)$ and  $\int_{U} |c(x,u)- c(z,u)|^{2}\nu(\d u)$ may diverge to $\infty$. Then neither Assumptions \ref{assumption1-non-lip-sec-2} nor \ref{assume-non-lip-sec-2} can be applied to derive the Feller continuity.  We wish to relax such conditions and thus improve Proposition \ref{prop-Feller-old}.

\begin{Assumption}\label{assumption-feller-new condition} There exist a positive constant $\delta_{0}  $ and a nondecreasing and concave function $\varrho : [0,\infty) \mapsto [0,\infty)$ satisfying \eqref{eq-varrho-conditions-Feller}
such that
\begin{equation}
\label{eq-feller-new condition}\begin{aligned}
\int_{U}&  \bigl[ |c(x,u)-c(z,u)|^{2} \wedge (4 |x-z| \cdot |c(x,u) -c(z,u)|)\bigr] \nu(\d u) \\  &   \pushright{ +\,  2 \lan x-z, b(x)-b(z)\ran + |\sg(x)-\sg(z)|^{2 }     \le 2 \kappa_{R} |x-z| \varrho (|x-z|)}
\end{aligned}\end{equation} for all $x,z\in \R^{d}$  with  $ |x|\vee |z|  \le R$ and  $|x-z| \le \delta_{0}$, where $\kappa_{R}$ is  a positive constant.
\end{Assumption}

Apparently Assumption \ref{assumption-feller-new condition} relaxes the conditions on $c$ and $\nu$ over those in Assumptions \ref{assumption1-non-lip-sec-2} and \ref{assume-non-lip-sec-2}.
The main result of this section is:

\begin{thm}\label{thm-feller new}
Suppose Assumptions \ref{Assumption-martingale-well-posed} and \ref{assumption-feller-new condition} hold. Then the process $X$ is Feller continuous.
\end{thm}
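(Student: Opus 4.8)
The plan is to establish Feller continuity by a synchronous coupling of the solutions started from nearby points, in the spirit of the proof of Proposition~\ref{prop-Feller-old}, but with the added difficulty that pathwise uniqueness is no longer available. Fix $t\ge0$ and $f\in C_{b}(\R^{d})$; we must show $P_{t}f(\wdt x)\to P_{t}f(x)$ as $\wdt x\to x$. By the continuous mapping theorem together with bounded convergence --- exactly as in the final paragraph of the proof of Proposition~\ref{prop-Feller-old} --- it suffices, for $\wdt x$ close to $x$, to produce on one probability space a pair $(\wdt X,X)$ with $\wdt X$ distributed as $X^{\wdt x}$ and $X$ distributed as $X^{x}$ such that $\wdt X(t)-X(t)\to0$ in probability as $\wdt x\to x$.

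To build such a pair, drive both copies by a common Brownian motion $W$ and a common Poisson random measure $N$; equivalently, take $(\wdt X,X)$ to be a non-exploding solution of the $\R^{2d}$-valued martingale problem for the basic coupling operator $\wdt\LL$ of the operator $\LL$ in \eqref{eq-L-operator-sect-2}. Since $b$, $\sigma$ and $x\mapsto\int_{U}|c(x,u)|^{2}\,\nu(\d u)$ are continuous, a standard localization --- solve up to the exit time of $B(R)$ and let $R\to\infty$, using the non-explosion of the two marginals guaranteed by Assumption~\ref{Assumption-martingale-well-posed} --- produces such a process for all $t\ge0$, and the weak uniqueness in Assumption~\ref{Assumption-martingale-well-posed} identifies its marginals as $X^{\wdt x}$ and $X^{x}$. (Alternatively one may couple synchronously the solutions of the equations whose coefficients are Lipschitz approximations of $b,\sigma,c$ and pass to the weak limit, once more using weak uniqueness to identify the limiting marginals.) I expect this construction to be the main obstacle: since only weak existence and uniqueness in law --- not pathwise uniqueness --- are at hand under Assumption~\ref{assumption-feller-new condition}, the synchronous coupling cannot simply be realized on the path space of $(W,N)$ as in Proposition~\ref{prop-Feller-old}, and its marginal laws must be pinned down a posteriori.

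Write $\Delta_{t}:=\wdt X(t)-X(t)$ and assume $0<|\Delta_{0}|=|\wdt x-x|<\delta_{0}$. With $\tau_{R}:=\inf\{t\ge0:|\wdt X(t)|\vee|X(t)|\ge R\}$ and $S_{\delta_{0}}:=\inf\{t\ge0:|\Delta_{t}|\ge\delta_{0}\}$, apply It\^o's formula to $\psi_{n}\bigl(|\Delta_{\,\cdot\,\wedge\tau_{R}\wedge S_{\delta_{0}}}|\bigr)$, where $\{\psi_{n}\}$ are the Yamada--Watanabe functions associated with $\varrho$, as constructed in the proof of Theorem~\ref{thm1-path-uniqueness-sec-2}. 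The drift and Brownian contributions are estimated exactly as in the appendix proof of that theorem. The new point is the compensated jump term
\begin{equation*}
\int_{U}\Bigl[\psi_{n}\bigl(|\Delta_{s}+\eta_{s}(u)|\bigr)-\psi_{n}(|\Delta_{s}|)-\psi_{n}'(|\Delta_{s}|)\,\frac{\langle\Delta_{s},\eta_{s}(u)\rangle}{|\Delta_{s}|}\Bigr]\,\nu(\d u),\qquad \eta_{s}(u):=c(\wdt X(s-),u)-c(X(s-),u),
\end{equation*}
whose integrand is nonnegative because $x\mapsto\psi_{n}(|x|)$ is convex, and which, by the elementary second-order estimate
\begin{equation*}
0\le |\Delta+\eta|-|\Delta|-\frac{\langle\Delta,\eta\rangle}{|\Delta|}\le \frac{C}{|\Delta|}\Bigl(|\eta|^{2}\wedge\bigl(4|\Delta|\,|\eta|\bigr)\Bigr)\qquad(\Delta\neq0)
\end{equation*}
and its $\psi_{n}$-analogue (up to the customary $\psi_{n}''$-correction, which is absorbed as in the appendix), is bounded by $\tfrac{C}{|\Delta_{s}|}\int_{U}\bigl[|\eta_{s}(u)|^{2}\wedge(4|\Delta_{s}|\,|\eta_{s}(u)|)\bigr]\,\nu(\d u)$. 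This quantity is finite, and by \eqref{eq-feller-new condition} the full generator bound reduces to a constant multiple of $\varrho(|\Delta_{s}|)$; this is precisely where the truncated quantity in \eqref{eq-feller-new condition}, rather than the possibly divergent $\int_{U}|c(x,u)-c(z,u)|\,\nu(\d u)$ appearing in Assumption~\ref{assumption1-non-lip-sec-2}, enters. Consequently $u(t):=\E\bigl[|\Delta_{t\wedge\tau_{R}\wedge S_{\delta_{0}}}|\bigr]$ satisfies $u(t)\le|\Delta_{0}|+\kappa_{R}'\int_{0}^{t}\varrho(u(s))\,\d s$ by Jensen's inequality and the concavity of $\varrho$, and a Bihari-type argument based on $\int_{0^{+}}\frac{\d r}{\varrho(r)}=\infty$ yields $u(t)\to0$ as $|\Delta_{0}|\to0$, for each fixed $R$ and $t$.

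Finally, as in the proof of Proposition~\ref{prop-Feller-old}, decompose $\{|\Delta_{t}|>\e\}$ according to whether $\tau_{R}<t$, or $\tau_{R}\ge t$ and $S_{\delta_{0}}>t$, or $\tau_{R}\ge t$ and $S_{\delta_{0}}\le t$; choose $R$ so large that $\P(\tau_{R}<t)<\epsilon$ (possible by the non-explosion in Assumption~\ref{Assumption-martingale-well-posed}), and use $u(t)\to0$ together with $|\Delta_{t\wedge S_{\delta_{0}}\wedge\tau_{R}}|\ge\delta_{0}$ on $\{S_{\delta_{0}}\le t\wedge\tau_{R}\}$ to obtain $\limsup_{\wdt x\to x}\P(|\Delta_{t}|>\e)\le\epsilon$. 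As $\epsilon>0$ is arbitrary, $\wdt X(t)\to X(t)$ in probability, and the reduction in the first paragraph completes the proof.
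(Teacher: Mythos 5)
Your reduction of Feller continuity to $\wdt X(t)-X(t)\to 0$ in probability (and then the continuous mapping theorem), and the observation that the coupling must be constructed abstractly and its marginal laws identified via weak uniqueness, are fine. However, your estimate of the generator applied to $\psi_{n}(|\Delta|)$ has a genuine gap that the paper's choice of test function is specifically designed to avoid.

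The paper does not use the Yamada--Watanabe sequence $\{\psi_{n}\}$ here; it uses the single \emph{concave} function $F(r)=r/(1+r)$, and the concavity is essential. In Lemma~\ref{lem-wdt-LL-F estimate}, the jump increment is bounded two ways: a Taylor (quadratic) estimate, and, crucially, a first--order estimate that uses $F(r)-F(r_{0})\le F'(r_{0})(r-r_{0})$, which is precisely the concavity inequality. This produces the bound
\[
F(|\Delta+\eta|)-F(|\Delta|)-\frac{F'(|\Delta|)}{|\Delta|}\langle\Delta,\eta\rangle\le\frac{1}{2|\Delta|(1+|\Delta|)^{2}}\bigl(|\eta|^{2}\wedge 4|\Delta||\eta|\bigr),
\]
and the key point is that the prefactor $\frac{1}{2|\Delta|(1+|\Delta|)^{2}}$ is \emph{the same} as the one multiplying $2B(x,z)+|\sigma(x)-\sigma(z)|^{2}$ in the diffusion part of $\wdt\LL F$. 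That identity of prefactors is what lets one sum the three generator pieces and apply the combined hypothesis \eqref{eq-feller-new condition} directly.

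Your $\psi_{n}$ are \emph{convex}, so the concavity inequality goes the wrong way: $\psi_{n}(|\Delta+\eta|)-\psi_{n}(|\Delta|)\ge\psi_{n}'(|\Delta|)(|\Delta+\eta|-|\Delta|)$, which is a \emph{lower} bound. The only uniform upper bound you get on the jump increment uses the Lipschitz constant $1$ (not $\psi_{n}'(|\Delta|)$), and hence has a prefactor of order $1/|\Delta|$. Meanwhile, the first--order drift/diffusion contribution carries the prefactor $\psi_{n}'(|\Delta|)/(2|\Delta|)$, which vanishes on $\{|\Delta|\le a_{n}\}$. Since \eqref{eq-feller-new condition} only bounds the \emph{sum} --- and the drift term $2B$ may be very negative, with the jump integral correspondingly large --- you cannot apply \eqref{eq-feller-new condition} when the prefactors differ: in the regime $|\Delta|<a_{n}$ the drift contribution is annihilated but the jump contribution survives with a prefactor $1/|\Delta|$, and nothing bounds it by $\varrho(|\Delta|)$. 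This is exactly why your claimed pointwise inequality
\[
\psi_{n}(|\Delta+\eta|)-\psi_{n}(|\Delta|)-\psi_{n}'(|\Delta|)\frac{\langle\Delta,\eta\rangle}{|\Delta|}\le\frac{C}{|\Delta|}\bigl(|\eta|^{2}\wedge 4|\Delta||\eta|\bigr)
\]
fails for $|\Delta|$ just below $a_{n}$ (the constant $C$ would have to blow up like $a_{n}/(n\varrho(a_{n}))$), and the extra term $(1-\psi_{n}'(|\Delta|))\langle\Delta,\eta\rangle/|\Delta|$ is genuinely first order in $|\eta|$ and need not be $\nu$--integrable under Assumption~\ref{assumption-feller-new condition}. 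The ``absorbed $\psi_{n}''$--correction'' in the appendix concerns only the Brownian part and does not save the jump part here. The fix is precisely the paper's: abandon the Yamada--Watanabe sequence in favor of the fixed concave $F$, apply Lemma~\ref{lem-wdt-LL-F estimate}, run Bihari's inequality on $\E[F(|\Delta_{\cdot\wedge T\wedge S_{\delta_{0}}\wedge\tau_{R}}|)]$, and conclude Feller continuity from Wasserstein convergence of transition kernels (Theorem 5.6 of Chen 2004) --- which also disposes cleanly of the coupling--construction issue you flagged, since the Wasserstein bound only requires the coupling's existence, not its pathwise identification.
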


We will use    the coupling method to prove Theorem \ref{thm-feller new}.
To this end, we recall the  infinitesimal generator $\LL$ of  \eqref{eq-sde-sec-2} defined in \eqref{eq-L-operator-sect-2}. 
 To construct the basic coupling operator for $\LL$,
  let us first introduce some notations.
  For $x,z\in \R^{d}  $, we set
$$a(x,z)=\begin{pmatrix}
a(x) & \sigma (x) \sigma (z)^{T} \\
\sigma (z) \sigma (x)^{T} & a(z)
\end{pmatrix},\quad
b(x,z)=\begin{pmatrix}
b(x)\\
b(z) \end{pmatrix},$$ where $a(x) = \sigma (x)\sigma (x)^{T}$
 and $ a(z)$ is similarly defined.
 Next we define the basic coupling operator (\cite{Chen04,Wang-10}) for the operator $\LL$ of \eqref{eq-L-operator-sect-2}
 \begin{equation}
\label{eq-A-coupling-operator} \begin{aligned}
\wdt{ \mathcal{L}} & f(x,z)  : =\! \bigl[ \wdt \Omega_{\text{diffusion}}   + \wdt \Omega_{\text{jump}}    \bigr] f(x,z),
\end{aligned}\end{equation} where  $f(x, z) \in C_{c}^{2} (\R^{d}  \times \R^{d}  )$,  and
\begin{align}\label{eq-Omega-d-defn}
&\wdt {\Omega}_{\text{diffusion}}f(x,z) =\frac
{1}{2}\hbox{tr}\bigl(a(x,z)D^{2}f(x, z)\bigr)+\langle
b(x,z), D f(x,z)\rangle,
\\
\label{eq-Omega-j-defn}
&\begin{aligned}
     \displaystyle\wdt {\Omega}_{\text{jump}}  f(x,z)
 &   = \int_{U}
[f(x+c(x,      u),       z+c(z,u))-f(x,      z)  \\
  & \qquad\quad -  \langle D_{x } f(x,      z),   c(x,      u)\rangle - \langle D_{z} f(x,       z),   c(z,u)\rangle  ] \nu(\d u). \end{aligned}
\end{align}
Here and below,  $Df(x,z)$ represents the gradient of $f$ with respect to the variables $x$ and $z$, that is,  $Df(x,z) = (D_{x}f(x,z), D_{z}f(x,z))'$. Likewise,  $D^{2}f(x,z)$ denotes the Hessian of $f$ with respect to   $x$ and $z$.

\begin{lem}\label{lem-wdt-LL-F estimate}
Suppose Assumption \ref{assumption-feller-new condition} holds. Then  \begin{equation}\label{eq-wdt-LL-F estimate}
\wdt \LL F(|x-z|) \le   \kappa_{R} \varrho (F(|x-z|))
\end{equation} for all $x,z\in \R^{d}$ with  $|x| \vee |z| \le R$ and $0 < |x-z| \le \delta_{0}$, where the function $F$ is defined by $F(r): =\frac{r}{1+ r}, r \ge 0$.
\end{lem}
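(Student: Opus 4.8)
The plan is to compute $\wdt\LL F(|x-z|)$ directly using the It\^o-type expansion encoded in the basic coupling operator $\wdt\LL = \wdt\Omega_{\text{diffusion}} + \wdt\Omega_{\text{jump}}$ applied to the composite function $(x,z)\mapsto F(|x-z|)$, and then bound each piece by the right-hand side of Assumption \ref{assumption-feller-new condition}. First I would record the derivatives of $\Phi(x,z):=F(|x-z|)$: writing $w=x-z$ and $r=|w|$, we have $D_x\Phi = F'(r) w/r = -D_z\Phi$, and the Hessian block structure gives $\frac12\tr(a(x,z)D^2\Phi) = \frac12\bigl(F''(r)-F'(r)/r\bigr)\frac{|\langle w,\sigma(x)-\sigma(z)\rangle|^2}{r^2} + \frac{F'(r)}{2r}|\sigma(x)-\sigma(z)|^2$, exactly as in the display \eqref{eq-wdt-LL-V-non confluence} of Theorem \ref{thm-non confluence} with $V$ replaced by $F$. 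Combining with the drift term $\langle b(x,z),D\Phi\rangle = \frac{F'(r)}{r}\langle w, b(x)-b(z)\rangle$, the diffusion part of $\wdt\LL F(r)$ is $\frac12\bigl(F''(r)-\frac{F'(r)}{r}\bigr)\frac{|\langle w,\sigma(x)-\sigma(z)\rangle|^2}{r^2} + \frac{F'(r)}{2r}\bigl(2\langle w,b(x)-b(z)\rangle + |\sigma(x)-\sigma(z)|^2\bigr)$.

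Next I would treat the two contributions. For the diffusion part, since $F(r)=r/(1+r)$ we have $F'(r)=(1+r)^{-2}>0$ and $F''(r)=-2(1+r)^{-3}<0$, so $F''(r)-F'(r)/r = -(1+r)^{-3}(2 + (1+r)/r) < 0$; hence the first term (which is multiplied by the nonnegative quantity $|\langle w,\sigma(x)-\sigma(z)\rangle|^2/r^2$) is $\le 0$ and can be dropped. This leaves $\frac{F'(r)}{2r}\bigl(2\langle w,b(x)-b(z)\rangle + |\sigma(x)-\sigma(z)|^2\bigr)$, which via the inequality $2\langle w,b(x)-b(z)\rangle + |\sigma(x)-\sigma(z)|^2 \le 2\langle w, b(x)-b(z)\rangle + |\sigma(x)-\sigma(z)|^2 + \int_U[\cdots]$ is bounded using \eqref{eq-feller-new condition} — but more carefully, I would keep the drift-plus-$\sigma$ term together with the jump term and apply \eqref{eq-feller-new condition} to the whole sum at the end.

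For the jump part, the key estimate is that for the function $F$ and any $w\neq 0$, $v\in\R^d$, one has
\[
F(|w+v|) - F(|w|) - \frac{F'(|w|)}{|w|}\langle w, v\rangle \le \frac{F'(|w|)}{2|w|}\bigl(|v|^2 \wedge (4|w|\,|v|)\bigr),
\]
or a variant of this with a constant independent of $w,v$; this is the analogue of the pointwise bound \eqref{eq-|x|-2-estimate} proved in Corollary \ref{cor-non confluence} but for $F$ instead of $r^{-2}$. To get it I would split into the cases $\langle w,v\rangle \ge 0$ versus $\langle w,v\rangle < 0$, and within the latter further split according to whether $|w+v| \ge |w|/2$ (use a second-order Taylor bound, noting $F'$ is decreasing so $F'(|w+\theta v|)$ stays controlled) or $|w+v| < |w|/2$ (use $F$ bounded and $F(|w+v|)-F(|w|)\le 0$, together with $|\langle w,v\rangle| \le |w||v|$ and the fact that $|v| \ge |w|/2$ in this regime, so the $4|w||v|$ term dominates). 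Applying this bound under the integral $\int_U \nu(du)$ and using $c(x,u)-c(z,u)$ as $v$ and $x-z$ as $w$, the jump term is dominated by $\frac{F'(|x-z|)}{2|x-z|}\int_U \bigl[|c(x,u)-c(z,u)|^2 \wedge (4|x-z|\cdot|c(x,u)-c(z,u)|)\bigr]\nu(du)$.

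Finally I would add the diffusion and jump bounds: the common factor $\frac{F'(|x-z|)}{2|x-z|}$ multiplies exactly the left-hand side of \eqref{eq-feller-new condition}, so $\wdt\LL F(|x-z|) \le \frac{F'(|x-z|)}{2|x-z|}\cdot 2\kappa_R |x-z|\varrho(|x-z|) = \kappa_R F'(|x-z|)\varrho(|x-z|) = \kappa_R (1+|x-z|)^{-2}\varrho(|x-z|)$. To conclude $\wdt\LL F(|x-z|) \le \kappa_R \varrho(F(|x-z|))$, I would invoke the structural hypothesis \eqref{eq-varrho-conditions-Feller}, namely $\varrho(r) \le (1+r)^2 \varrho(r/(1+r))$: with $r=|x-z|$ this says $(1+|x-z|)^{-2}\varrho(|x-z|) \le \varrho(|x-z|/(1+|x-z|)) = \varrho(F(|x-z|))$, which is precisely what is needed. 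The main obstacle is the pointwise jump inequality for $F$: unlike the $r^{-2}$ case in Corollary \ref{cor-non confluence}, the minimum-type truncation $|v|^2\wedge(4|w||v|)$ must be produced from the Taylor remainder, so the case analysis on the size of $|w+v|$ relative to $|w|$ must be arranged so that both the quadratic bound (valid when $|w+v|$ is comparable to $|w|$) and the linear bound (valid when $|w+v|$ is small, forcing $|v|$ to be at least of order $|w|$) are available with clean constants; once this lemma is in hand the rest is bookkeeping with the monotonicity of $F'$ and the hypothesis \eqref{eq-varrho-conditions-Feller}.
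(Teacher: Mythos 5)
Your overall plan — expand $\wdt\LL F(|x-z|)$ via the basic coupling operator, drop the nonpositive $\bigl(F''-F'/r\bigr)$ term in the diffusion part, prove a pointwise jump bound of the form $\frac{F'(|w|)}{2|w|}\bigl(|v|^2\wedge 4|w|\,|v|\bigr)$, sum to get the common prefactor $\frac{F'(r)}{2r}$, and finish with $\eqref{eq-varrho-conditions-Feller}$ — is exactly the paper's strategy and the diffusion computation and final assembly are correct. The divergence, and the gap, is in your proposed proof of the jump pointwise estimate. Your Taylor argument in the sub-case $\langle w,v\rangle<0$, $|w+v|\ge|w|/2$ does not work: the map $\xi\mapsto F(|\xi|)$ has Hessian quadratic form bounded only by $\frac{F'(|\xi|)}{|\xi|}\,|v|^2$, and for $\langle w,v\rangle<0$ the intermediate points $w+\theta v$ can come arbitrarily close to the origin (indeed the segment can pass \emph{through} the origin, e.g.\ $w=(1,0)$, $v=(-3,0)$ gives $|w+v|=2\ge|w|/2$ yet $w+\tfrac13 v=0$), where $\frac{F'(r)}{r}$ blows up and the Taylor remainder integral is not even finite in the form you wrote. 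Note also that your remark ``$F'$ is decreasing so $F'(|w+\theta v|)$ stays controlled'' works against you: when $|w+\theta v|<|w|$, $F'$ at the intermediate point is \emph{larger}, not smaller, and in any case the problematic factor is $F'(r)/r$, not $F'(r)$.

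The paper avoids the case analysis entirely: it proves the two comparison bounds \emph{globally} and only then takes the minimum. For the quadratic bound it observes (following Proposition~3.1 in \cite{Wang-10}) that the auxiliary function $\phi(s):=F(\sqrt{s})=\frac{\sqrt{s}}{1+\sqrt{s}}$ is concave on $[0,\infty)$, so
\begin{align*}
F(|w+v|)-F(|w|)-\tfrac{F'(|w|)}{|w|}\langle w,v\rangle
&=\phi(|w+v|^2)-\phi(|w|^2)-2\phi'(|w|^2)\langle w,v\rangle\\
&\le \phi'(|w|^2)\bigl(|w+v|^2-|w|^2-2\langle w,v\rangle\bigr)=\phi'(|w|^2)\,|v|^2=\frac{|v|^2}{2|w|(1+|w|)^2},
\end{align*}
valid for all $w\neq 0$ and $v$ without any restriction on sign or magnitude. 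For the linear bound it uses concavity of $F$ itself plus the triangle and Cauchy--Schwarz inequalities to get $\le 2F'(|w|)|v|=\frac{4|w|\,|v|}{2|w|(1+|w|)^2}$, again globally. Taking the minimum gives precisely the integrand appearing in Assumption~\ref{assumption-feller-new condition}. You should replace your case analysis with this two-inequalities-then-min argument (the ad hoc split you sketched is what the paper does for $V(r)=r^{-2}$ in Corollary~\ref{cor-non confluence}, where the singularity at $0$ forces it, but for the bounded concave $F$ it is neither necessary nor, as written, correct).
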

The proof of Lemma \ref{lem-wdt-LL-F estimate} involves straightforward but lengthy computations. To preserve the flow of the presentation, we arrange it in Appendix \ref{sect-appendix}.

\begin{proof}[Proof of Theorem \ref{thm-feller new}] By virtue of Theorem 5.6 in \cite{Chen04}, it suffices to prove that \begin{equation}
\label{eq:Wasserstein-convergence}
W_{d}(P(t, x,\cdot), P(t,z,\cdot) ) \to 0 \text{ as } z\to x,
\end{equation}
where  $\{P(t,x,\cdot): t > 0, x\in \R^{d}\}$ is the transition probability family associated with the process $X$ of \eqref{eq-sde-sec-2} and $W_{d}(\cdot, \cdot)$ denotes the Wasserstein metric between two probability measures: $$W_{d} (\mu,\nu): = \inf\biggl \{\int d(x,y) \pi(\d x, \d y): \pi \in \mathscr{C}(\mu, \nu) \biggr\},$$ where  $ \mathscr{C}(\mu, \nu)$ denotes the family of  coupling measures of  $\mu$ and $\nu$, and $d(x,y):= \frac{|x-y|}{1+|x-y|}$ for $x,y\in \R^{d}$.

Given $x\neq z$ with $\delta_{0} > |x-z | > \frac{1}{n_{0}}$, where $n_{0}\in \mathbb N$,   let $(\wdt X,\wdt Z)$ be the coupling    process corresponding to the operator $\wdt \LL$ of \eqref{eq-A-coupling-operator} with $(\wdt X(0),\wdt Z(0))=(x,z)$. Denote by $T$ the coupling time. For $n\ge n_{0}$ and $  R >  |x| \vee |z|$, 
define
\begin{align}\label{Tn tauR defns}
 & T_n : =   \inf \Bigl\{  t \ge 0:   | \wdt X  (t) - \wdt Z  (t) | < \frac{1}{n}\Bigr\}, \ \
    &  \tau_{R} : =  \inf \{  t \ge 0:   | \wdt X (t) | \vee |\wdt  Z  (t) | > R\}, \end{align}
    and \begin{align}\label{S delta defn}
     S_{\delta_{0}}: = \inf\{t \ge 0:  |\wdt  X  (t) - \wdt  Z  (t) | > \delta_{0} \}.
\end{align} We have $\tau_{R}\to \infty$ and $T_{n}\to T$ a.s. as $R\to \infty$ and $n \to \infty$, respectively. Moreover, by It\^o's formula and \eqref{eq-wdt-LL-F estimate}, we have
\begin{align*}
\E&[F(|\wdt X(t\wedge T_{n} \wedge S_{\delta_{0}} \wedge \tau_{R})- \wdt Z(t\wedge T_{n} \wedge S_{\delta_{0}} \wedge \tau_{R})|)] \\
   & = F(|x-z|) + \E\biggl[\int_{0}^{t\wedge T_{n} \wedge S_{\delta_{0}} \wedge \tau_{R}} \wdt \LL F(|\wdt X(s) - \wdt Z(s)|) \d s \biggr]\\
   & \le  F(|x-z|) + \kappa_{R}  \E\biggl[\int_{0}^{t\wedge T_{n} \wedge S_{\delta_{0}} \wedge \tau_{R}} \varrho  (F(|\wdt X(s) - \wdt Z(s)|))\d s \biggr].
   \end{align*}
   Now passing to the limit as $n \to \infty$, it follows from the bounded and monotone convergence theorems that
  \begin{align*}
\E& [F(|\wdt X(t\wedge T \wedge S_{\delta_{0}}\wedge \tau_{R} )- \wdt Z(t \wedge T \wedge S_{\delta_{0}}\wedge \tau_{R} )|)] \\
& \le  F(|x-z|) + \kappa_{R}  \E\biggl[\int_{0}^{t\wedge T \wedge S_{\delta_{0}} \wedge \tau_{R}} \varrho  (F(|\wdt X(s) - \wdt Z(s)|))\d s \biggr]\\
   & \le F(|x-z|) + \kappa_{R}  \E\biggl[\int_{0}^{t} \varrho  (F(|\wdt X(s\wedge T \wedge S_{\delta_{0}} \wedge \tau_{R}) - \wdt Z(s\wedge T \wedge S_{\delta_{0}} \wedge \tau_{R})|))\d s \biggr]\\
   &  \le  F(|x-z|) + \kappa_{R}\int_{0}^{t} \varrho \bigl(\E[F(|\wdt X(s\wedge T \wedge S_{\delta_{0}}\wedge \tau_{R}) - \wdt Z(s\wedge T \wedge S_{\delta_{0}}\wedge \tau_{R})|)]\bigr)\d s,
   \end{align*}
 where we use the concavity of $\varrho $ and Jensen's inequality to obtain the last inequality.
 Then    using 
 Bihari's inequality, we have
\begin{displaymath}
\E [F(|\wdt X(t\wedge T \wedge S_{\delta_{0}}\wedge \tau_{R} )- \wdt Z(t \wedge T \wedge S_{\delta_{0}} \wedge \tau_{R})|)]  \le G^{-1} (G\circ F(|x-z|) + \kappa_{R} t),
\end{displaymath}  where the function $G(r): = \int_{1}^{r} \frac{\d s} {\varrho (s)}$ is strictly increasing and satisfies $G(r) \to -\infty$ as $r \downarrow 0$.  In addition, since the function $F$ is strictly increasing, we have
\begin{align*}
   F(\delta_{0}) \P\{S_{\delta_{0}} < t \wedge T\wedge \tau_{R} \} & \le  \E [F(|\wdt X(t\wedge T \wedge S_{\delta_{0}} \wedge \tau_{R})- \wdt Z(t \wedge T \wedge S_{\delta_{0}} \wedge \tau_{R})|) I_{\{ S_{\delta_{0}} < t \wedge T\wedge \tau_{R}\}}]    \\
    &   \le \E [F(|\wdt X(t\wedge T \wedge S_{\delta_{0}} \wedge \tau_{R})- \wdt Z(t \wedge T \wedge S_{\delta_{0}} \wedge \tau_{R})|)]  \\
    & \le G^{-1} (G\circ F(|x-z|) + \kappa_{R} t).
\end{align*}

 For any $t \ge 0$ and $\e > 0$, since $\lim_{R\to\infty} \tau_{R} = \infty$ a.s., we can choose some $R > 0$ sufficiently large so that $\P(t > \tau_{R}) < \e$.
Then it follows that
\begin{align*}
  \E& [F(|\wdt X(t) - \wdt Z(t)|)] \\ & =   \E [F(|\wdt X(t\wedge \tau_{R}) - \wdt Z(t\wedge \tau_{R})|)I_{\{t \le \tau_{R} \}}] +  \E [F(|\wdt X(t) - \wdt Z(t)|) I_{\{ t > \tau_{R}\}}]  \\       & \le   \E[F(|\wdt X(t\wedge T\wedge \tau_{R}) - \wdt Z(t\wedge T\wedge \tau_{R})|)]   + \e   \\
    &   = \E[F(|\wdt X(t\wedge T\wedge \tau_{R}) - \wdt Z(t\wedge T\wedge \tau_{R})|)I_{\{ S_{\delta_{0}} < t \wedge T\wedge \tau_{R}\}}]  \\ & \quad + \E[F(|\wdt X(t\wedge T\wedge \tau_{R}) - \wdt Z(t\wedge T\wedge \tau_{R})|)I_{\{ S_{\delta_{0}} \ge  t \wedge T\wedge \tau_{R}\}}]  + \e\\
    & \le \P\{S_{\delta_{0}} < t \wedge T\wedge \tau_{R} \} + \E [F(|\wdt X(t\wedge T\wedge \tau_{R}\wedge S_{\delta_{0}} )- \wdt Z(t \wedge T\wedge \tau_{R} \wedge S_{\delta_{0}} )|)] + \e\\
    & \le \frac{1 +2 \delta_{0}}{\delta_{0}} G^{-1} (G\circ F(|x-z|) + \kappa_{R} t) + \e.
\end{align*} Now passing to the limit, we obtain $\lim_{x-z \to 0} \E [F(|\wdt X(t) - \wdt Z(t)|)] \le 0+ \e$. 
  Since $\e >0$ is arbitrary, it follows that  $\lim_{x-z \to 0}\E[F(|\wdt X(t) - \wdt Z(t)| ) ]  = 0$.
   This leads to \eqref{eq:Wasserstein-convergence} because by the definition of $W_{d}$, we have
$W_{d}(P(t,x,\cdot), P(t,z,\cdot) )  \le  \E[F(|\wdt X(t ) - \wdt Z (t )|)].$
 This gives the Feller property as desired.
\end{proof}

\section{Strong Feller Property}\label{sect-strong-Feller}

\begin{Assumption}\label{Assump-Uniform-Elliptic}
There exists a $\lambda_0 > 0$ such that $\langle \xi, a(x)\xi\rangle \ge \lambda_0|\xi|^2$ for all $x,\xi\in \R^d$, where $a(x) : = \sigma(x) \sigma(x)^{T}$.  Denote by $\sigma_{\lambda_{0}}$ the unique symmetric nonnegative definite matrix-valued function such that $\sgla^{2} = a- \lambda_{0}I$.   In addition, there exist   positive constants  $\delta_{0}, \kappa_{0}  $ and a nonnegative function $\vartheta $ defined on $[0,\delta_{0}]$ satisfying $\lim_{r\to 0} \vartheta (r) =0$ such that
\begin{equation}
\label{eq-str-feller-condition}\begin{aligned}
\int_{U}&  \bigl[ |c(x,u)-c(z,u)|^{2} \wedge (4 |x-z| \cdot |c(x,u) -c(z,u)|)\bigr] \nu(\d u) \\  & \quad + 2  \lan x-z, b(x)-b(z) \ran + |\sgla(x)-\sgla(z)|^{2 }     \le 2 \kappa_{0} |x-z| \vartheta(|x-z|),
\end{aligned}\end{equation} for all $x,z\in \R^{d}$ with $|x-z| \le \delta_{0}$.
\end{Assumption}

The main result of this section is:
\begin{thm}\label{thm-strong-Feller}
Under Assumptions \ref{Assumption-martingale-well-posed} and  \ref{Assump-Uniform-Elliptic}, for any $t > 0$ and $f\in \B_{b}(\R^{d})$, we have
\begin{displaymath}
\sup_{x\neq z} \frac{|P_{t} f(x) - P_{t} f(z)|}{|x-z|} \le K \| f\|_{\infty},
\end{displaymath} where $K= K(t,\delta_{0}, \kappa_{0})$ is a positive constant. In particular, it follows that  the process $X$ of \eqref{eq-sde-sec-2} is strong Feller continuous.
\end{thm}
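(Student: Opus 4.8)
I would prove the stated gradient bound --- which immediately gives strong Feller continuity, since it forces $x\mapsto P_tf(x)$ to be locally Lipschitz (hence continuous) for every bounded measurable $f$ --- by the coupling-by-reflection method of \cite{Lindvall,PriolaW-06,Chen04}, using the uniform ellipticity to split off a non-degenerate radial Brownian component that drives the two copies together at a linear-in-$|x-z|$ rate. Fix $x\neq z$. Using Assumption \ref{Assump-Uniform-Elliptic} write $a(x) = \lambda_0 I + \sgla(x)^2$ and realise $X^x$ as the solution of $\d\wdt X(t) = b(\wdt X(t))\,\d t + \sqrt{\lambda_0}\,\d B(t) + \sgla(\wdt X(t))\,\d W(t) + \int_U c(\wdt X(t-),u)\wdt N(\d t,\d u)$ with $B\perp W$ independent $d$-dimensional Brownian motions; this SDE has the same generator $\LL$ as \eqref{eq-sde-sec-2}, so by Assumption \ref{Assumption-martingale-well-posed} its solution has the law of $X^x$. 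Couple by letting $\wdt Z$ solve the same equation from $z$ with the same $W$ and $N$ but with the reflected Brownian motion $\d B^z(t) = (I - 2 e_t e_t^T)\,\d B(t)$, $e_t := \Delta_t/|\Delta_t|$, $\Delta_t := \wdt X(t) - \wdt Z(t)$, run up to the coupling time $T := \inf\{t\ge 0:\Delta_t = 0\}$, and set $\wdt Z(t) := \wdt X(t)$ for $t\ge T$. As in \cite{Chen04}, $(\wdt X,\wdt Z)$ solves the martingale problem for the associated coupling operator, whose diffusion part degenerates on the diagonal, so it is a genuine coupling of $X^x$ and $X^z$ and the coupling is successful: $\{\wdt X(t)\neq\wdt Z(t)\}\subseteq\{T>t\}$. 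Hence for $f\in\B_b(\R^d)$, $|P_tf(x) - P_tf(z)| = |\E[f(\wdt X(t)) - f(\wdt Z(t))]|\le 2\|f\|_\infty\,\P\{T>t\}$, and everything reduces to proving $\P\{T>t\}\le K(t,\delta_0,\kappa_0)\,|x-z|$.

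\textbf{The key generator estimate.} The continuous-martingale part of $\Delta_t$ has covariance $4\lambda_0 e_t e_t^T + (\sgla(\wdt X(t)) - \sgla(\wdt Z(t)))^2$, whose component in the radial direction $e_t$ is at least $4\lambda_0$; this is the non-degenerate noise pushing $|\Delta_t|$ towards $0$. The crux, analogous to \lemref{lem-wdt-LL-F estimate} and to be proved in the appendix, is: for every increasing concave $\Phi\in C^2(0,\infty)$ and all $0<|x-z|\le\delta_0$, $\wdt\LL\Phi(|x-z|)\le 2\lambda_0\Phi''(|x-z|) + \kappa_0\vartheta(|x-z|)\Phi'(|x-z|)$, where $\wdt\LL$ is the reflection-coupling operator. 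To obtain this I would apply It\^o's formula to $\Phi(|\Delta_t|)$: the reflected component contributes the term $2\lambda_0\Phi''$; the drift and the synchronously coupled $\sgla$-part contribute at most $\frac{\Phi'(|x-z|)}{2|x-z|}\bigl(2\lan x-z,b(x)-b(z)\ran + |\sgla(x)-\sgla(z)|^2\bigr)$ (using $\Phi''\le 0$ to drop the remaining nonnegative curvature terms); and the jump increment $\Phi(|\Delta + y|) - \Phi(|\Delta|) - \Phi'(|\Delta|)\lan e,y\ran$ with $y = c(x,u)-c(z,u)$ is bounded, via concavity of $\Phi$ together with the two elementary inequalities $|\Delta+y|-|\Delta|-\lan e,y\ran\le|y|^2/(2|\Delta|)$ and $\le 2|y|$, by $\frac{\Phi'(|\Delta|)}{2|\Delta|}\bigl(|y|^2\wedge 4|\Delta||y|\bigr)$. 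Summing and invoking \eqref{eq-str-feller-condition} (applicable since $|\Delta_t|\le\delta_0$) collapses the bracketed sum to $2\kappa_0|x-z|\vartheta(|x-z|)$, giving the claim.

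\textbf{From the generator estimate to the hitting-time bound.} Because $\vartheta(r)\to0$ as $r\to0$, the linear ODE $2\lambda_0\Phi'' + \kappa_0\vartheta\Phi' = 0$ with $\Phi(0)=0$, $\Phi'(0+)=1$ admits a bounded, increasing, concave solution $g$ on $[0,\delta_0]$ with $c_1 r\le g(r)\le r$, and the ODE with right-hand side $-1$ admits a bounded, increasing, concave solution $h$ on $[0,\delta_0]$ with $h(0)=0$ and $h'(0+)<\infty$; extend both to be constant beyond $\delta_0$. Introduce $S_{\delta_0} := \inf\{t:|\Delta_t|>\delta_0\}$ and $\tau_R := \inf\{t:|\wdt X(t)|\vee|\wdt Z(t)|>R\}$. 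By the key estimate, $g(|\Delta_{t\wedge T\wedge S_{\delta_0}\wedge\tau_R}|)$ is a bounded supermartingale and $h(|\Delta_{t\wedge T\wedge S_{\delta_0}\wedge\tau_R}|) + (t\wedge T\wedge S_{\delta_0}\wedge\tau_R)$ is a supermartingale. The first gives the ``gambler's ruin'' bound $\P\{S_{\delta_0}\le t,\ S_{\delta_0}<T\wedge\tau_R\}\le g(|x-z|)/g(\delta_0)\le|x-z|/g(\delta_0)$; the second gives $\E[t\wedge T\wedge S_{\delta_0}\wedge\tau_R]\le h(|x-z|)\le h'(0+)|x-z|$, hence $\P\{T\wedge S_{\delta_0}\wedge\tau_R>t\}\le h'(0+)|x-z|/t$ by Markov's inequality. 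Since $\{T>t\}\subseteq\{T\wedge S_{\delta_0}\wedge\tau_R>t\}\cup\{S_{\delta_0}\le t,\ S_{\delta_0}<T\wedge\tau_R\}\cup\{\tau_R\le t\}$, adding the two bounds and letting $R\to\infty$ (both marginals are non-exploding by Assumption \ref{Assumption-martingale-well-posed}, so $\tau_R\to\infty$ a.s. and $\P\{\tau_R\le t\}\to0$) yields $\P\{T>t\}\le\bigl(h'(0+)/t + 1/g(\delta_0)\bigr)|x-z| =: K(t,\delta_0,\kappa_0)|x-z|$. Combined with the first paragraph this is the desired estimate, and strong Feller continuity follows.

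\textbf{Main obstacle.} I expect two points to require the most care. First, making the reflection coupling rigorous starting only from weak existence and uniqueness in law: one must construct a solution of the coupled martingale problem whose diffusion coefficient degenerates exactly on the diagonal $\{x = z\}$, so that the two marginals are truly $X^x$ and $X^z$ and the coupling is successful --- this is the standard ``coupling-operator'' device but must be spelled out. Second, the generator computation of the key estimate: the truncated jump term $|c(x,u)-c(z,u)|^2\wedge\bigl(4|x-z|\,|c(x,u)-c(z,u)|\bigr)$ in \eqref{eq-str-feller-condition} has to be matched exactly by the two competing bounds on the jump increment $\Phi(|\Delta+y|)-\Phi(|\Delta|)-\Phi'(|\Delta|)\lan e,y\ran$, and the coupling must be set up so that only $|\sgla(x)-\sgla(z)|^2$ --- not $|\sigma(x)-\sigma(z)|^2$ --- enters, while the constant $4\lambda_0$ produced by the reflected Brownian component is preserved in the radial direction.
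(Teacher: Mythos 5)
Your proposal is correct and follows essentially the same route as the paper: realise the semigroup by the coupling-by-reflection process attached to the operator $\wdh\LL$, establish a differential inequality for $\wdh\LL\Phi(|x-z|)$ on a small neighbourhood of the diagonal, and convert it into the hitting-time bound $\P\{T>t\}\le K|x-z|$ via a gambler's-ruin plus an expected-exit-time estimate. The generator estimate you state, $\wdh\LL\Phi\le 2\lambda_0\Phi''+\kappa_0\vartheta\Phi'$ for concave increasing $\Phi$, is exactly the content of the paper's Lemma \ref{lem-str-Feller}, derived by the same splitting into $\sgla$-synchronous drift/diffusion part and the $|c(x,u)-c(z,u)|^{2}\wedge 4|x-z|\,|c(x,u)-c(z,u)|$ jump bound that appears in \eqref{eq-Omega jump F estimate}.

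The one genuine, if minor, difference is in how the inequality is turned into the hitting-time bound. You construct two bespoke test functions by solving the ODEs $2\lambda_0\Phi''+\kappa_0\vartheta\Phi'=0$ (giving the supermartingale $g$ for the gambler's-ruin estimate) and $2\lambda_0\Phi''+\kappa_0\vartheta\Phi'=-1$ (giving the expected-exit-time bound via $h$). The paper instead picks the single fixed function $F(r)=r/(1+r)$ and observes that, because $\vartheta(r)\to 0$, the estimate collapses to $\wdh\LL F\le -\beta<0$ on some $(0,\delta]$ with $\delta\le\delta_0$; that one inequality then yields \emph{both} $F(\delta)\P\{T>S_\delta\}+\beta\E[T\wedge S_\delta]\le F(|x-z|)$, so a single computation produces the two bounds you derive separately, with no ODE to solve. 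Your version is more systematic and a little more transparent about where each ingredient comes from; the paper's is shorter. The logic and the final constant structure $K=K(t,\delta_0,\kappa_0,\lambda_0)$ are the same, and your handling of the stopping times $T_n,\tau_R,S_{\delta_0}$ and the passage $R\to\infty$ mirrors the paper's. One small notational slip worth fixing: you write $\wdt\LL$ in the key estimate where you mean the reflection-coupling operator $\wdh\LL$ rather than the basic (synchronous) coupling operator $\wdt\LL$ of \eqref{eq-A-coupling-operator}.
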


As in \cite{Wang-10,ChenLi-89,PriolaW-06}, we construct the coupling by reflection operator $\wdh \LL$ of $\LL$ as follows. For $x,z \in \R^{d}$, put $g(x,z ) : = -\lambda_{0} I + \sigma_{\lambda_{0}}(x)\sigma_{\lambda_{0}}(z)^{T}$
and  set
$$\wdh a(x,z )= \begin{pmatrix}
a(x ) & g(x,z ) \\
g(x,z )^{T}  & a(z )
\end{pmatrix},\quad
b(x,z )=\begin{pmatrix}
b(x )\\
b(z ) \end{pmatrix}.$$  We can verify directly that $\wdh a(x,z)$ is symmetric and nonnegative
definite. 
Then we define \begin{displaymath}
\wdh \Omega_{\text{diffusion}}  h(x,z) := \frac{1}{2} \tr(\wdh a(x,z) D^{2}  h(x,z)) + \langle b(x,z), D h(x,z)\rangle,
\end{displaymath} and \begin{equation}\label{eq-L-hat-defn}
\wdh \LL h(x,z) : = \wdh  \Omega_{\text{diffusion}}  h(x,z) + \wdt \Omega_{\text{jump}}  h(x,z),
\end{equation} where $h \in C_{0}^{2} (\R^{d}\times \R^{d})$ and $\wdt \Omega_{\text{jump}}  $ is defined in \eqref{eq-Omega-j-defn}. 
Let \begin{align*}
 & A(x,z) =  a(x) + a(z) - 2 g(x,z),\ \
    & \lbar A_{\lambda_{0}}(x,z) = \frac{1}{|x-z|^{2}} \langle x-z, A(x,z) (x-z)\rangle.
\end{align*} Then straightforward computations lead to   \begin{align*}
    \tr( A(x,z) ) =|\sgla(x)-\sgla(z)|^{2 } + 4 \lambda_{0}  \text{ and }
    \lbar A_{\lambda_{0}}(x,z) \ge 4 \lambda_{0} . \end{align*}

We need the following lemma to prove Theorem \ref{thm-strong-Feller}:
    \begin{lem}\label{lem-str-Feller}
Under Assumption \ref{Assump-Uniform-Elliptic}, there exist some positive constants $\beta$ and $\delta$ such that
\begin{equation}
\label{eq-L-hat-bound}
\wdh \LL F(|x-z|) \le -  \beta < 0
\end{equation} for all $x,z\in \R^{d}$ with $0 < |x-z| \le \delta$, where the function $F$ is defined by   $F(r): = \frac{r}{1+r}$, $r \ge 0$.
\end{lem}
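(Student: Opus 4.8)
The plan is to compute $\wdh \LL F(|x-z|)$ explicitly by applying It\^o's formula to the function $(x,z)\mapsto F(|x-z|)$ under the coupling by reflection operator $\wdh \LL$ of \eqref{eq-L-hat-defn}, and then to show that the dominant term — coming from the reflected diffusion part — is a \emph{strictly negative} second-derivative contribution of order $\lambda_0$, which overwhelms all the remaining (drift, elliptic, jump) terms once $|x-z|$ is small. Write $r=|x-z|$ and note $F'(r)=(1+r)^{-2}>0$, $F''(r)=-2(1+r)^{-3}<0$. Using the notation $\lbar A_{\lambda_0}(x,z)$ and the bound $\lbar A_{\lambda_0}(x,z)\ge 4\lambda_0$ already recorded in the excerpt, the diffusion part of $\wdh\LL F(r)$ splits (exactly as in the non confluence computation, with $\wdh a$ in place of $a$) into a ``radial'' piece $\tfrac12 F''(r)\lbar A_{\lambda_0}(x,z)$ plus a ``tangential'' piece $\tfrac{F'(r)}{2r}\bigl(\tr A(x,z)-\lbar A_{\lambda_0}(x,z)\bigr)$ together with the pure drift term $\tfrac{F'(r)}{r}\lan x-z,b(x)-b(z)\ran$.

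First I would isolate the radial diffusion term: since $F''(r)<0$ and $\lbar A_{\lambda_0}(x,z)\ge 4\lambda_0$, we get
\begin{displaymath}
\tfrac12 F''(r)\lbar A_{\lambda_0}(x,z)\le 2\lambda_0 F''(r)=-\frac{4\lambda_0}{(1+r)^{3}},
\end{displaymath}
which for $r\le\delta$ (with $\delta\le1$, say) is bounded above by $-\tfrac{\lambda_0}{2}$, a strictly negative constant. Next I would bound the tangential diffusion term and the drift term: using $\tr A(x,z)=|\sgla(x)-\sgla(z)|^2+4\lambda_0$, the quantity $\tr A(x,z)-\lbar A_{\lambda_0}(x,z)$ is controlled by $|\sgla(x)-\sgla(z)|^2$ (the $4\lambda_0$'s combine favorably with $\lbar A_{\lambda_0}\ge 4\lambda_0$), so the tangential plus drift contribution is at most $\tfrac{F'(r)}{2r}\bigl(2\lan x-z,b(x)-b(z)\ran+|\sgla(x)-\sgla(z)|^2\bigr)$, and $F'(r)/r=\tfrac1{r(1+r)^2}$. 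Finally I would handle the jump part: by the elementary inequality $F(|w+v|)-F(w)-F'(w)\langle \hat w,v\rangle\le C\,\tfrac{|v|^2\wedge(|w|\,|v|)}{w^2}$ with $w=x-z$, $v=c(x,u)-c(z,u)$ (this is the $F$-analogue of \eqref{eq-|x|-2-estimate}, provable by the same three-case Taylor argument and already packaged in Lemma~\ref{lem-wdt-LL-F estimate} up to a change of reference radius), the jump term is bounded by
\begin{displaymath}
\frac{C}{r(1+r)^{2}}\int_{U}\bigl[|c(x,u)-c(z,u)|^{2}\wedge (4r\,|c(x,u)-c(z,u)|)\bigr]\nu(\d u).
\end{displaymath}

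Combining these, everything except the radial term is of the form $\tfrac{1}{r(1+r)^2}$ times the left-hand side of the modulus-of-continuity bound \eqref{eq-str-feller-condition} (with $\sgla$ in place of $\sg$, which is precisely why Assumption~\ref{Assump-Uniform-Elliptic} is stated with $\sgla$), hence at most $\tfrac{1}{r(1+r)^2}\cdot 2\kappa_0 r\,\vartheta(r)=\tfrac{2\kappa_0\vartheta(r)}{(1+r)^2}\le 2\kappa_0\vartheta(r)$, which tends to $0$ as $r\downarrow0$. Therefore
\begin{displaymath}
\wdh\LL F(r)\le -\frac{4\lambda_0}{(1+r)^3}+2\kappa_0\vartheta(r),
\end{displaymath}
and choosing $\delta>0$ small enough that $2\kappa_0\vartheta(r)\le\lambda_0$ for all $0<r\le\delta\le1$ gives $\wdh\LL F(r)\le -\lambda_0+2\lambda_0\cdot\text{(something)}$; more carefully, for $r\le\delta$ we obtain $\wdh\LL F(|x-z|)\le -\tfrac{\lambda_0}{2}=:-\beta<0$. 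The main obstacle I anticipate is the bookkeeping in the diffusion computation — in particular verifying that, with the reflection term $g(x,z)=\lambda_0(I-2(x-z)(x-z)^T/r^2)+\sgla(x)\sgla(z)^T$ built into $\wdh a$, the radial component of $A(x,z)$ really is $\ge 4\lambda_0$ (so that the $F''<0$ factor is exploited) while the tangential component only sees the $\sgla$-difference (so that no uncontrolled $\lambda_0$ survives there); this is exactly the algebraic miracle of the reflection coupling and must be carried out cleanly. The rest — the jump estimate and the final choice of $\delta$ — is routine given Lemma~\ref{lem-wdt-LL-F estimate} and \eqref{eq-str-feller-condition}.
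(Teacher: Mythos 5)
Your proposal is correct and follows essentially the same route as the paper's proof: split $\wdh\LL F$ into the reflected-diffusion and jump parts, exploit $F''<0$ together with $\lbar A_{\lambda_0}\ge 4\lambda_0$ to extract the strictly negative radial term $2\lambda_0 F''(r)=-4\lambda_0/(1+r)^3$, note $\tr A-\lbar A_{\lambda_0}\le|\sgla(x)-\sgla(z)|^2$, bound the jump term via the estimate \eqref{eq-Omega jump F estimate} from Lemma~\ref{lem-wdt-LL-F estimate}, and then use \eqref{eq-str-feller-condition} and $\vartheta(r)\to0$ to choose $\delta$ small enough. The only nits are cosmetic (a factor of $2$ in the intermediate bound, and the ad hoc ``$F$-analogue'' inequality is phrased slightly differently from the paper's \eqref{eq-F estimate}, whose denominator is $2r(1+r)^2$ rather than $r^2$), none of which affect the argument.
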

\begin{proof}
We have  $F'(r) = \frac{1}{(1+r)^{2}} > 0$ and $F''(r) = \frac{-2}{(1+r)^{3}}  <  0$ for all $r \ge 0$.
Moreover we can verify directly that  for all $x,z \in \R^{d}$ with $0<|x-z| \le \delta_{0}$,
\begin{align}\label{eq1-strFe}
\nonumber \wdh \Omega_{\text{diffusion}}   F(|x-z|) & = \frac{F'' (|x-z|)}{2}\lbar A_{\lambda_{0}} (x,z) + \frac{F'(|x-z|) }{2|x-z|} \big[ \tr(A(x,z))-\lbar A_{\lambda_{0}}(x,z) + 2  B(x,z)\big]\\
   \nonumber  & \le 2 \lambda_{0}F'' (|x-z|) +\frac{F'(|x-z|) }{2|x-z|} [|\sgla(x)-\sgla(z)|^{2 } + 2B(x,z)]\\
     & \le \frac{-4\lambda_{0}}{(1+|x-z|)^{3}} +\frac{ \kappa_{0}}{(1+|x-z|)^{2}} \vartheta (|x-z|),
\end{align} where the last inequality follows from \eqref{eq-str-feller-condition}.

Then it follows from \eqref{eq-Omega jump F estimate} and \eqref{eq-str-feller-condition} that  for all $x,z \in \R^{d}$ with $0<|x-z| \le \delta_{0}$, we have  \begin{equation}
\label{eq2-str-fe-computation}
\begin{aligned}
\wdt {\Omega}_{\text{jump}} F(|x-z|)
& = \int_{U} \biggl[F(|x+c(x,u)-z-c(z,u)|) - F(|x-z|)\\
    &  \qquad \qquad
   - \frac{F'(|x-z|)}{|x-z|}  \lan x-z, c(x,u) - c(z,u)\ran  \biggr] \nu(\d u) \\
   & \le \frac{ \kappa_{0}}{(1+|x-z|)^{2}}\vartheta (|x-z|).\end{aligned}
\end{equation}
Plugging \eqref{eq1-strFe} and \eqref{eq2-str-fe-computation} into \eqref{eq-L-hat-defn} leads to
\begin{equation}
\label{eq-str-fe-estimation}
\begin{aligned}
\wdh \LL F(|x-z|) & \le \frac{-4\lambda_{0}}{(1+|x-z|)^{3}} +\frac{2 \kappa_{0}}{(1+|x-z|)^{2}} \vartheta (|x-z|) \\
  & \le \frac{-4\lambda_{0}}{(1+\delta_{0})^{3}}
  + 2\kappa_{0}\vartheta (|x-z|) 
  \le \frac{-2\lambda_{0}}{(1+\delta_{0})^{3}} < 0,
\end{aligned}\end{equation} for all  $x,z \in \R^{d}$ with $0<|x-z| \le \delta$, where $0 < \delta \le \delta_{0}$, whose existence follows from the assumption that $\lim_{r\to 0} \vartheta (r) =0$. This establishes \eqref{eq-L-hat-bound} and hence completes the proof. \end{proof}

\begin{proof}[Proof of Theorem \ref{thm-strong-Feller}] Let $\beta, \delta$ and $F$ be as in  Lemma \ref{lem-str-Feller}.
Given $x\neq z$ with $\delta > |x-z | > \frac{1}{n_{0}}$,
where $n_{0}\in \mathbb N$, let $(\wdt X,\wdt Z)$ be the coupling process
corresponding to the operator $\wdh \LL$ of \eqref{eq-L-hat-defn}
with $(\wdt X(0),\wdt Z(0))=(x,z)$.
Denote by $T$ the coupling time. For $\mathbb N \ni n\ge n_{0}$ and $R > 0$, 
define the stopping times $T_{n} $ and $\tau_{R}$ as in \eqref{Tn tauR defns}. Also define $S_{\delta}$ as in \eqref{S delta defn} (with $\delta_{0}$ replaced by $\delta$).
    We have
\begin{align*}
0 & \le F (\delta)\P \set{T_{n}\wedge \tau_{R} > S_{\delta}}
 \le  \E  \bigl[F ( | \wdt X  (T_{n}\wedge S_{\delta} \wedge \tau_{R} ) - \wdt Z  (T_{n}\wedge S_{\delta} \wedge \tau_{R}) |)\bigr] \\
  & = F(|x-z|) + \E\biggl[\int_{0}^{T_{n}\wedge S_{\delta} \wedge \tau_{R}} \wdh\LL F(|\wdt X(s)- \wdt Z(s)|) \d s \biggr]
  \le F (|x-z|) -\beta\E[T_{n}\wedge S_{\delta} \wedge \tau_{R}].
\end{align*} Then it follows that
\begin{equation}\label{eq-prob mean bdd by F}
F (\delta)\P  \set{T_{n}\wedge \tau_{R} > S_{\delta}} + \beta  \E [T_{n}\wedge S_{\delta} \wedge \tau_{R}] \le F (|x-z|).
\end{equation}Since $T_{n} \to T$ a.s. as $n \to \infty$ and $\tau_{R} \to \infty$ a.s. as $R \to \infty$,  we have
\begin{displaymath}
F (\delta)\P  \set{T  > S_{\delta}} +  \beta \E [T \wedge S_{\delta} ] \le F (|x-z|).
\end{displaymath}
Then for any $t >0$ and $0 < |x-z| < \delta$,
\begin{align*}
 \P  \set{T > t}    & = \P \set{T > t, S_{\delta} > t} + \P \set{T > t, S_{\delta} \le t}
       \le  \P \set{T  \wedge S_{\delta} > t} + \P \set{T >  S_{\delta} }  \\
    & \le \frac{1}{t} \E [T  \wedge S_{\delta} ] +  \P \set{T >  S_{\delta} }
     \le \biggl( \frac{1}{t\beta} + \frac{1}{F (\delta)}\biggr)  F  (|x-z|).
\end{align*} Finally, for any $f \in \B_{b}(\R^{d})$,   $t>0$, and $0 < |x-z| < \delta$, we  can write
\begin{align*}
 | P_{t} f(x) - P_{t} f(z)|  &  = |\E[ f(\wdt X(t))  - f(\wdt Z(t))]|
       \le 2\|f\|_{\infty} \P\{T> t\} \\
         & \le 2\|f\|_{\infty}  \biggl( \frac{1}{t\beta} + \frac{1}{F (\delta)}\biggr)  F  (|x-z|)=2\|f\|_{\infty}  \biggl( \frac{1}{t\beta} +  \frac{1+\delta}{\delta}\biggr)\frac{|x-z|}{1+|x-z|}\\
         & \le 2\|f\|_{\infty}  \biggl( \frac{1}{t\beta} + \frac{1+\delta}{\delta}\biggr)|x-z|.
\end{align*}
On the other hand, if $ |x-z| \ge \delta$, then we can write
\begin{displaymath}
 | P_{t} f(x) - P_{t} f(z)|  \le 2\|f\|_{\infty} \le 2\|f\|_{\infty}\frac{|x-z|}{\delta}.
\end{displaymath} We can combine the above two displayed equations to obtain
\begin{displaymath}
\frac{ | P_{t} f(x) - P_{t} f(z)|}{|x-z|} \le 2\|f\|_{\infty} \biggl[ \biggl( \frac{1}{t\beta} + \frac{1+\delta}{\delta}\biggr) \vee \frac{1}{\delta}\biggr] = 2\|f\|_{\infty} \biggl( \frac{1}{t\beta} + \frac{1+\delta}{\delta}\biggr).
\end{displaymath} In particular, the desired strong Feller property follows.
\end{proof}

 In view of Theorem \ref{thm-feller new}, one may naturally ask  whether the strong Feller property holds under a ``localized'' version of  Assumption \ref{Assump-Uniform-Elliptic}? The following result gives an affirmative answer:
  \begin{prop}\label{prop-str-Feller-local version} Let Assumption \ref{Assumption-martingale-well-posed} hold.
Suppose that for each $R > 0$, there exist positive constants $\lambda_{R}$ and $\kappa_{R}$ such that for all $x, z\in \R^{d}$ with $|x| \vee |z| \le R$, we have \begin{align*} & \lan \xi, a(x)\xi\ran \ge \lambda_{R}|\xi |^{2}, \quad   \forall  \xi \in \R,\end{align*} and \begin{align*}
\int_{U}&  \bigl[ |c(x,u)-c(z,u)|^{2} \wedge (4 |x-z| \cdot |c(x,u) -c(z,u)|)\bigr] \nu(\d u) \\  &  + 2  \lan x-z, b(x)-b(z) \ran + |\sg_{\lambda_{R}}(x)-\sg_{\la_{R}}(z)|^{2 }     \le 2 \kappa_{R} |x-z| \vartheta(|x-z|), \quad \forall |x-z| \le \delta_{0},
 \end{align*}  where $\delta_{0}$ is a positive constant and $\vartheta$ is a function satisfying the conditions specified in Assumption \ref{Assump-Uniform-Elliptic}, and $\sigma_{\lambda_{R}}$ the unique symmetric nonnegative definite matrix-valued function such that $\sgla^{2} = a- \lambda_{R}I$. then the process $X$ is strong Feller continuous.
 \end{prop}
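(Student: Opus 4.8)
The strategy is to localize the coupling argument of Theorem~\ref{thm-strong-Feller}. The obstruction to applying that theorem directly is that the ellipticity constant $\lambda_{R}$ is only available on the ball $B(R)=\{|x|<R\}$ and may degenerate as $R\to\infty$, so one cannot send $R\to\infty$ inside the estimate as in the proof of Theorem~\ref{thm-strong-Feller}. Instead I would fix a large radius $R$, build a coupling by reflection that uses $\lambda_{R}$ only on $B(R)\times B(R)$, couple the solution $X^{x}$ with the \emph{fixed} solution $X^{x_{0}}$ at the point $x_{0}$ where continuity is to be proved, and run the estimate only up to the first time the $X^{x_{0}}$-coordinate leaves a slightly smaller ball; the error incurred then depends only on $x_{0}$ and $R$, and $R\to\infty$ can be taken after $x\to x_{0}$.

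Concretely, for each fixed $R>0$ I would define an operator $\wdh{\LL}_{R}$ on $C_{0}^{2}(\R^{d}\times\R^{d})$ which, on the set $\{(x,z):|x|\vee|z|\le R,\ 0<|x-z|\le\delta_{R}\}$ (for a small $\delta_{R}\le\delta_{0}$), coincides with the coupling by reflection operator of \eqref{eq-L-hat-defn} but with $\lambda_{0}$ replaced by $\lambda_{R}$ and $\sgla$ replaced by $\sigma_{\lambda_{R}}:=\sqrt{a-\lambda_{R}I}$ (well defined on $B(R)$ since $a\ge\lambda_{R}I$ there), and which coincides with the basic coupling operator $\wdt\LL$ of \eqref{eq-A-coupling-operator} elsewhere. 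Both building blocks share the jump part $\wdt\Omega_{\text{jump}}$ of \eqref{eq-Omega-j-defn} and both carry $a(x),a(z)$ on the diagonal blocks of their diffusion matrices, so $\wdh{\LL}_{R}$ is a coupling operator for $\LL$, and (as for Theorems~\ref{thm-feller new} and~\ref{thm-strong-Feller}) there is a coupling process $(\wdt X,\wdt Z)$ for $\wdh{\LL}_{R}$ whose marginals are the laws of $X^{x}$ and $X^{x_{0}}$. I would then prove the localized analogue of Lemma~\ref{lem-str-Feller}: there are $\beta_{R}>0$ and $0<\delta_{R}\le\delta_{0}$ with $\wdh{\LL}_{R}F(|x-z|)\le-\beta_{R}$ for $|x|\vee|z|\le R$ and $0<|x-z|\le\delta_{R}$, where $F(r)=r/(1+r)$. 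This is exactly the computation in the proof of Lemma~\ref{lem-str-Feller}, now using $\lbar A_{\lambda_{R}}(x,z)\ge4\lambda_{R}$ and the modulus bound of the hypothesis with constant $\kappa_{R}$, both valid on $B(R)$, and then choosing $\delta_{R}$ small using $\vartheta(r)\to0$.

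Next, mimicking the proof of Theorem~\ref{thm-strong-Feller}, fix $x\ne x_{0}$ with $|x|<R$ and $0<|x-x_{0}|<\delta_{R}$, let $(\wdt X(0),\wdt Z(0))=(x,x_{0})$, let $T$ be the coupling time, and set
\begin{gather*}
T_{n}=\inf\{t\ge0:|\wdt X(t)-\wdt Z(t)|<\tfrac{1}{n}\},\qquad
S_{\delta_{R}}=\inf\{t\ge0:|\wdt X(t)-\wdt Z(t)|>\delta_{R}\},\\
\tau_{R}'=\inf\{t\ge0:|\wdt Z(t)|>R-\delta_{R}\}.
\end{gather*}
The key point is that on $[0,T_{n}\wedge S_{\delta_{R}}\wedge\tau_{R}')$ one has $|\wdt Z|\le R-\delta_{R}$ and $|\wdt X-\wdt Z|\le\delta_{R}$, hence $|\wdt X|\le R$, so $\wdh{\LL}_{R}F(|\wdt X-\wdt Z|)\le-\beta_{R}$ along the trajectory. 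It\^o's formula for $F(|\wdt X-\wdt Z|)$ up to $t\wedge T_{n}\wedge S_{\delta_{R}}\wedge\tau_{R}'$ together with the passage $n\to\infty$ then give, exactly as in Theorem~\ref{thm-strong-Feller},
\[
F(\delta_{R})\,\P\{S_{\delta_{R}}<T\wedge\tau_{R}'\}+\beta_{R}\,\E[T\wedge S_{\delta_{R}}\wedge\tau_{R}']\le F(|x-x_{0}|),
\]
whence $\P\{T\wedge\tau_{R}'>t\}\le(\tfrac{1}{t\beta_{R}}+\tfrac{1}{F(\delta_{R})})F(|x-x_{0}|)$ and therefore
\[
\P\{T>t\}\le\Bigl(\tfrac{1}{t\beta_{R}}+\tfrac{1}{F(\delta_{R})}\Bigr)F(|x-x_{0}|)+\P\{\tau_{R}'\le t\}.
\]
Since the $\wdt Z$-marginal is the law of $X^{x_{0}}$, the term $\P\{\tau_{R}'\le t\}$ equals $\P\{\inf\{s:|X^{x_{0}}(s)|>R-\delta_{R}\}\le t\}$ and depends only on $x_{0}$ and $R$. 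For $f\in\B_{b}(\R^{d})$ this gives $|P_{t}f(x)-P_{t}f(x_{0})|=|\E[f(\wdt X(t))-f(\wdt Z(t))]|\le2\|f\|_{\infty}\P\{T>t\}$; letting $x\to x_{0}$ with $R$ fixed kills the first term, and then letting $R\to\infty$ kills $\P\{\tau_{R}'\le t\}$ because $X^{x_{0}}$ is non-exploding (Assumption~\ref{Assumption-martingale-well-posed}). Hence $P_{t}f$ is continuous at $x_{0}$; since $x_{0}$ and $f$ are arbitrary and the case $|x-x_{0}|\ge\delta_{R}$ is trivial, strong Feller continuity follows.

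The argument is, morally, a routine localization of Theorem~\ref{thm-strong-Feller}, and the two places that need care are the following. First, one must justify the existence of the coupling process for the \emph{patched} operator $\wdh{\LL}_{R}$, which is discontinuous along $\{x=z\}$ (the reflection term), along $\partial(B(R)\times B(R))$, and along $\{|x-z|=\delta_{R}\}$; this is handled just as in the coupling constructions underlying Theorems~\ref{thm-feller new} and~\ref{thm-strong-Feller}. Second --- and this is the point of the localization device --- one must check that replacing the exit time of the pair from $B(R)\times B(R)$ by the exit time $\tau_{R}'$ of the $\wdt Z$-coordinate from the \emph{smaller} ball $B(R-\delta_{R})$ keeps the whole computation inside the region $|x|\vee|z|\le R$ on which $\wdh{\LL}_{R}F\le-\beta_{R}$ holds, while rendering the residual term $\P\{\tau_{R}'\le t\}$ independent of $x$; this is exactly what makes the final double limit work and is the step I expect to be the crux.
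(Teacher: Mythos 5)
Your proposal takes essentially the same route as the paper: localize the coupling-by-reflection argument of Theorem~\ref{thm-strong-Feller} to a ball $B(R)$ using the local ellipticity constant $\lambda_{R}$, establish the localized analogue of Lemma~\ref{lem-str-Feller}, deduce a tail bound for the coupling time cut off at a suitable exit time, and iterate the $x\to x_{0}$ and $R\to\infty$ limits in that order. The drift-type inequality $F(\delta_{R})\P\{S_{\delta_{R}}<T\wedge\tau\}+\beta_{R}\E[T\wedge S_{\delta_{R}}\wedge\tau]\le F(|x-x_{0}|)$ and the final passage to $|P_{t}f(x)-P_{t}f(x_{0})|$ via $2\|f\|_{\infty}\P\{T>t\}$ are exactly the paper's steps.

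Where you genuinely improve on the paper's own treatment is the choice of cutoff. The paper uses $\tau_{R}=\inf\{t:|\wdt X(t)|\vee|\wdt Z(t)|>R\}$, which depends on \emph{both} coupled components, and asserts that for fixed $\e,t$ one can pick $R$ with $\P(\tau_{R}<t)<\e$. Since the coupled process, and hence the law of $\tau_{R}$, changes as $z$ varies, this choice of $R$ is not visibly uniform in $z$ near $x$; and because Assumption~\ref{assumption-non-linear-growth-sec-2} is not among the hypotheses of the proposition, no uniform-in-$z$ moment bound is available to justify it. Your device of (i) coupling $X^{x}$ with the \emph{fixed} solution $X^{x_{0}}$, (ii) replacing $\tau_{R}$ by $\tau_{R}'=\inf\{t:|\wdt Z(t)|>R-\delta_{R}\}$, whose distribution depends only on $x_{0}$ and $R$, and (iii) observing that up to $T_{n}\wedge S_{\delta_{R}}\wedge\tau_{R}'$ the constraint $|\wdt X-\wdt Z|\le\delta_{R}$ already forces $|\wdt X|\le R$, makes the residual $\P\{\tau_{R}'\le t\}$ independent of $x$ and the double limit $x\to x_{0}$, then $R\to\infty$, completely clean. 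The patched operator $\wdh\LL_{R}$ you introduce also makes explicit what the paper leaves implicit, namely that $\sigma_{\lambda_{R}}$ is only defined where $a\ge\lambda_{R}I$, i.e.\ on $B(R)$. Both arguments establish the proposition; yours is the more careful version of the paper's.
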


 \begin{proof}  The same computations as those in the proof of Lemma \ref{lem-str-Feller} reveal that for each $R > 0$ and all $x, z\in \R^{d}$ with $|x| \vee |z| \le R$ and $0 < |x-z| \le \delta_{R}$, there exist positive constants   $ \delta_{R}$ and $\beta_{R}$ such that \begin{displaymath}
\wdh \LL F(|x-z|) \le -\beta_{R}  < 0.
\end{displaymath}  Use the same notations as those in the proof of Theorem \ref{thm-strong-Feller}.  For every $\e > 0$ and $t > 0$, we choose some $R > 0$ sufficiently large so that $\P(t > \tau_{R}) < \e$. For this chosen $R$, \eqref{eq-prob mean bdd by F}, in which the constant $\beta$ is replaced by $\beta_{R}$ and the stopping time $S_{\delta}$ replaced by $S_{\delta_{R}}$, remains valid. Now passing to limit as  $n\to \infty$ in  \eqref{eq-prob mean bdd by F} yields
\begin{displaymath}
F (\delta)\P  \set{T  \wedge \tau_{R}> S_{\delta_{R}}} +  \beta_{R} \E [T \wedge \tau_{R}\wedge S_{\delta_{R}} ] \le F (|x-z|).
\end{displaymath}
Then  for all $x, z\in \R^{d}$ with $|x| \vee |z| \le R$ and $0 < |x-z| \le \delta_{R}$, we can compute
\begin{align*}
\P\{T > t \} & = \P\{T > t, \tau_{R} \ge t, S_{\delta_{R}} > t \} +\P\{T > t, \tau_{R} \ge t, S_{\delta_{R}} \le t \} + \P\{T > t, \tau_{R} <  t \} \\
                & \le \P \{T \wedge \tau_{R} \wedge S_{\delta_{R}} > t \} + \P\{ T \wedge \tau_{R} > S_{\delta_{R}}\} + \e\\
                & \le  \biggl( \frac{1}{t\beta_{R}} + \frac{1}{F (\delta)}\biggr)  F  (|x-z|) + \e.
\end{align*} Consequently for any $f \in \B_{b}(\R^{d})$ and all $x, z\in \R^{d}$ with $|x| \vee |z| \le R$ and $0 < |x-z| \le \delta_{R}$, we have \begin{displaymath}
|P_{t} f(x) -P_{t}f(z)| \le  2\|f\|_{\infty}  \biggl( \frac{1}{t\beta_{R}} + \frac{1+\delta}{\delta}\biggr)|x-z| + 2\e\|f\|_{\infty}.
\end{displaymath} In particular, since $\e > 0$ is arbitrary, it follows that $\lim_{x-z \to 0}|P_{t} f(x) -P_{t}f(z)| = 0$; this gives the desired strong Feller property.   \end{proof}

\begin{rem}\label{rem-about str-Feller conditions}   Note that Assumption \ref{Assump-Uniform-Elliptic} places very mild condition on the function $\vartheta$. For instance,  when $c\equiv 0$, Theorem \ref{thm-strong-Feller} and Proposition \ref{prop-str-Feller-local version}  allow us to derive strong Feller property as long as the function  $b$ is locally uniformly continuous, and $\sigma_{\lambda_{0}}$  is locally H\"older continuous with
 exponent 
 $\delta_{\sigma_{\lambda_{0}}} > \frac12$.  
On the other hand, the uniform ellipticity condition for the diffusion matrix $a(x,k)$ in Assumption \ref{Assump-Uniform-Elliptic} is quite standard in the literature. Indeed,   similar assumptions are used in \cite{PriolaW-06, Qiao-14,Wang-10} to obtain the  strong Feller property. Proposition \ref{prop-str-Feller-local version} further relaxes this condition to a ``local'' one. In case that the diffusion matrix is degenerate, one needs to place certain conditions on the jumps to obtain strong Feller property; see \cite{Wang-11} for related work.
\end{rem}

\section{Irreducibility and Exponential Ergodicity}\label{sect: irreducibility and ergodicity}

The semigroup $P_{t}$ defined in \eqref{eq-semi-group} is said to be {\em irreducible} if for any $t > 0$ and $x\in \R^{d}$,
\begin{displaymath}
P_{t}(x,B) > 0 \text{ for all non-empty and open } B \subset \R^{d}.
\end{displaymath} A probability measure $\mu$ on $\R^{d}$ is said to be an {\em invariant measure} for the semigroup $P_{t}$ if $P_{t}^{*} \mu =\mu$ for all $t > 0$, where $P_{t}^{*} \mu(B): = \int_{\R^{d}} P_{t}(x,B)\mu(\d x)$, $B \in \B(\R^{d})$.

The following result improves Proposition  2.4 of \cite{Qiao-14}:

\begin{lem}\label{lem-irreducibility}
Suppose Assumption \ref{assumption-non-linear-growth-sec-2} $($with $\zeta\equiv 1)$    and Assumption \ref{assume-non-lip-sec-2} hold. Assume that there exists a constant $\lambda_{0} > 0$ such that \begin{equation}
\label{eq-diffusion-nondegenerate}
\lan y, a(x) y\ran \ge \lambda_{0} |y|^{2}, \quad\text{ for all }x,y\in \R^{d},
\end{equation} where $a(x) = \sigma(x)\sigma(x)^{T}$. Then the semigroup $P_{t}$ of \eqref{eq-semi-group}  is irreducible.
\end{lem}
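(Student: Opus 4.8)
The plan is to adapt the now-standard argument for irreducibility of jump SDEs (as in \cite{Zhang-09,PriolaSXZ-12,Qiao-14}) to our setting, exploiting the non-degeneracy \eqref{eq-diffusion-nondegenerate} together with the global growth bound from Assumption \ref{assumption-non-linear-growth-sec-2} (with $\zeta\equiv 1$, i.e.\ the usual linear growth) and the pathwise uniqueness guaranteed by Assumption \ref{assume-non-lip-sec-2} via Theorem \ref{thm-path-uniqueness-sec-2}. Fix $t>0$, $x\in\R^{d}$, and a non-empty open ball $B=B(y_{0},r)$; it suffices to show $P_{t}(x,B)>0$. The key idea is to \emph{decompose} the driving Poisson noise: write $U=U_{\e}\cup U_{\e}^{c}$ where $U_{\e}=\{u: |c(x,u)|\text{-type quantity large}\}$ has finite $\nu$-measure, and freeze the large jumps. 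More precisely, I would first show that with positive probability the underlying Poisson random measure has no ``large'' jumps on $[0,t]$ (an event of the form $\{N([0,t]\times U_{\e})=0\}$, which has probability $e^{-t\nu(U_{\e})}>0$ and is independent of $W$ and of the small-jump part $\wdt N$ restricted to $[0,t]\times U_{\e}^{c}$).

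On that event, $X$ solves an SDE driven only by $W$ and by the compensated small-jump measure. The next step is to compare $X$ with the diffusion $Y$ obtained by dropping the small-jump term entirely, i.e.\ $\d Y(t)=b(Y(t))\d t+\sigma(Y(t))\d W(t)$, and to show that on a set of positive probability $\sup_{s\le t}|X(s)-Y(s)|$ can be made arbitrarily small. Here one uses a Gronwall/Bihari estimate built on \eqref{eq-Feller-condition} (the small-jump contribution $\int_{U_{\e}^{c}}|c(\cdot,u)|^{2}\nu(\d u)$ is small when $\e$ is small, which is precisely why one can control the discrepancy). Then it remains to invoke the well-known irreducibility (the support theorem, Stroock--Varadhan style) for the non-degenerate diffusion $Y$: since $a=\sigma\sigma^{T}\ge\lambda_{0}I$ and $b,\sigma$ are continuous with linear growth, $\P\{Y(t)\in B(y_{0},r/2)\}>0$. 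Combining the three facts --- positive probability of no large jumps, uniform smallness of $X-Y$, and positivity of $\P\{Y(t)\in B(y_{0},r/2)\}$ --- yields $P_{t}(x,B)>0$.

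The technically delicate part is making the comparison between $X$ and $Y$ on an event of \emph{positive} probability rather than merely ``with high probability as $\e\to 0$'': one must quantify, via a maximal inequality for the compensated small-jump martingale $\int_{0}^{\cdot}\int_{U_{\e}^{c}}c(X(s-),u)\wdt N(\d s,\d u)$ together with the linear growth bound to keep $|X(s)|$ in a compact set up to an exit time $\tau_{R}$, that $\P\{\sup_{s\le t\wedge\tau_{R}}|X(s)-Y(s)|<r/2\}$ stays bounded below uniformly. This is where Assumption \ref{assume-non-lip-sec-2} and the linear-growth form of Assumption \ref{assumption-non-linear-growth-sec-2} enter: the former gives a Bihari-type control of the difference (so the diffusion coefficients do not need to be Lipschitz), and the latter keeps the process from escaping to infinity with overwhelming probability, so that all the local constants $\kappa_{R}$ are effective. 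I expect the improvement over \cite[Proposition~2.4]{Qiao-14} alluded to in Remark~\ref{rem-irreducibilty} to come exactly from replacing a Lipschitz hypothesis by the weaker modulus-of-continuity condition \eqref{eq-Feller-condition} in this comparison step, together with handling the possibility that $\nu(U)=\infty$ by the small/large jump split. Finally, the support theorem for $Y$ can be quoted from e.g.\ \cite{Stroock-V}, so the only genuinely new work is the stochastic-calculus estimate controlling $X-Y$; I would relegate the routine parts of that estimate to computations parallel to those already carried out in the proof of Theorem \ref{thm-path-uniqueness-sec-2}.
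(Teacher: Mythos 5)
Your plan is genuinely different from the paper's argument, and it has a real gap at the final ``combination'' step.

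The paper does \emph{not} split large from small jumps and does \emph{not} invoke a support theorem for a pure diffusion. Instead it adds a deterministic steering drift $h^{n}(s)=\frac{a-X^{n}(t_{0})}{T-t_{0}}-b(J^{n}(s))$ on $[t_{0},T]$, where $J^{n}$ is the straight line from a truncation $X^{n}(t_{0})$ of $X(t_{0})$ to the target $a$, and defines $Y$ to be the solution of the SDE with this extra drift but with \emph{all} jumps retained. The non-degeneracy \eqref{eq-diffusion-nondegenerate} is used only to make $\sigma^{-1}h^{n}$ a Girsanov drift for the Brownian part; the resulting measure $\Q\sim\P$ makes $Y$ a weak solution of the original SDE, so by pathwise uniqueness $\P\{|X^{x}(T)-a|>r\}=\Q\{|Y(T)-a|>r\}$, and one reduces to showing $\P\{|Y(T)-a|>r\}<1$. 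That last step is exactly your Bihari estimate (applied to $\Delta_{t}=Y(t)-J^{n}(t)$, with $H(r)=r^{2}/(1+r^{2})$), tuned by taking $n$ large and $t_{0}$ close to $T$. Because the drift change is handled by an \emph{equivalence} of measures, there is no competition between small probabilities: it is enough to make one $L^{1}$ quantity small.

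Your route has to balance three $\e$-dependent quantities, and they pull in opposite directions. Writing $U_{\e}$ for the ``large'' jumps (so $\nu(U_{\e})<\infty$ and $\int_{U_{\e}^{c}}|c(x,u)|^{2}\nu(\d u)\to 0$ as $\e\to 0$), you need simultaneously (i) $e^{-t\nu(U_{\e})}>0$, (ii) $\P\{\sup_{s\le t}|X(s)-Y(s)|\ge r/2\}<p_{\e}$, and (iii) $p_{\e}:=\P\{Y(t)\in B(y_{0},r/2)\}>0$. Getting (ii) via the Bihari argument forces $\e\to 0$, since for a \emph{fixed} $\e$ the small-jump martingale $\int_{0}^{\cdot}\int_{U_{\e}^{c}}c(X(s-),u)\wdt N(\d s,\d u)$ has a fixed, not-small, second moment, so the discrepancy $X-Y$ cannot be pushed below an arbitrary $r/2$. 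But on the no-large-jump event the SDE for $X$ acquires the compensating drift $b(\cdot)-\int_{U_{\e}}c(\cdot,u)\nu(\d u)$, and this must also be the drift of your comparison diffusion $Y$; as $\e\to 0$ with $\nu(U)=\infty$, $\int_{U_{\e}}|c(\cdot,u)|\nu(\d u)$ can blow up (Assumption \ref{assume-non-lip-sec-2} only controls $\int_{U}|c|^{2}\nu$), so $p_{\e}$ has no lower bound uniform in $\e$. You acknowledge the delicacy but propose to show that $\P\{\sup|X-Y|<r/2\}$ ``stays bounded below uniformly''; even if that were achievable, it is not sufficient, because you would still need it to beat $1-p_{\e}$, and $p_{\e}$ is not under your control. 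Also note that the support theorem for $Y$ in the Stroock--Varadhan form requires more smoothness of $\sigma$ than the present hypotheses supply; the paper's Girsanov step sidesteps this entirely, using \eqref{eq-diffusion-nondegenerate} only to invert $\sigma$ pointwise. In short, the missing idea is to change the drift via Girsanov rather than to approximate away the jumps: that converts the problem to a single Bihari estimate against a deterministic curve and eliminates the probability-balancing issue that your decomposition creates.
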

\begin{rem}\label{rem-irreducibilty}
  Proposition  2.4  in \cite{Qiao-14} assumes slightly stronger conditions than those in Lemma \ref{lem-irreducibility}. In particular,   \cite{Qiao-14} assumes that \begin{displaymath}
2\lan x-y, b(x)-b(y) \ran + |\sigma(x)-\sigma(y)|^{2} + \int_{U}|c(x,u)-c(y,u)|^{2}\nu(\d u) \le K |x-y|^{2}\kappa(|x-y|),
\end{displaymath} for all $x,y\in \R^{d}$, where $K> 0$ and $\kappa $ is a positive and continuous function satisfying $\lim_{r\downarrow 0} \frac{\kappa(r)}{\log (r^{-1})} = \delta < \infty$. This condition excludes  functions such as  $r\mapsto \log (r^{-1}) \log(\log( r^{-1}))$ for $r > 0$ small.  By contrast,  Assumption \ref{assume-non-lip-sec-2} allows the modulus of continuity of the coefficients of \eqref{eq-sde-sec-2} to be of the form $r^{2} \log (r^{-2}) \log(\log( r^{-2}))$ for $r > 0$ small. Thanks to this relaxation,  the estimation techniques used in \cite{Qiao-14}   is not directly applicable in our analysis here. In addition, instead of requiring the modulus of continuity to hold for all $x,y\in \R^{d}$ as in \cite{Qiao-14},  Assumption \ref{assume-non-lip-sec-2}  only requires it in a small neighborhood of the diagonal line $x=y$ in $\{ (x,y) \in\R^{d}\times \R^{d}: |x| \vee |y| \le R\}$ for each $R>0$. Also, we note that the condition $\int_{U}|c(x,u)|^{4}\nu(\d u) \le K (1+|x|)^{4} $ in \cite{Qiao-14} is not necessary.
\end{rem}

Even though we  use essentially the same ideas of approximate controllability and Girsanov theorem as those in \cite{Qiao-14} and \cite{Zhang-09}, the technical difficulties arising  from the relaxed assumptions 
 merit a sketch of proof of Lemma  \ref{lem-irreducibility} in  Appendix \ref{sect-appendix}.

\begin{cor}\label{corollary-invariant-measure-uniqueness}
Under Assumptions \ref{assumption-non-linear-growth-sec-2}  $($with $\zeta\equiv 1)$, \ref{assume-non-lip-sec-2},    and \ref{Assump-Uniform-Elliptic}, then the semigroup $P_{t}$ of \eqref{eq-semi-group} has at most one invariant measure.
\end{cor}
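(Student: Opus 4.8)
The plan is to derive Corollary~\ref{corollary-invariant-measure-uniqueness} as a direct consequence of the irreducibility and strong Feller property established earlier in the paper, via the classical Doob--Khasminskii dichotomy. First I would note that under Assumptions~\ref{assumption-non-linear-growth-sec-2} (with $\zeta\equiv 1$), \ref{assume-non-lip-sec-2} and \ref{Assump-Uniform-Elliptic}, all the hypotheses needed to invoke the preceding results are in place: Assumption~\ref{assumption-non-linear-growth-sec-2} together with Assumption~\ref{assume-non-lip-sec-2} gives, by Theorem~\ref{thm-existence+uniquness}, a unique non-exploding strong solution for every initial condition, so in particular Assumption~\ref{Assumption-martingale-well-posed} holds; Lemma~\ref{lem-irreducibility} then yields that $P_t$ is irreducible (its nondegeneracy hypothesis \eqref{eq-diffusion-nondegenerate} is exactly the uniform ellipticity contained in Assumption~\ref{Assump-Uniform-Elliptic}); and Assumption~\ref{Assump-Uniform-Elliptic} together with Assumption~\ref{Assumption-martingale-well-posed} lets us apply Theorem~\ref{thm-strong-Feller} to conclude that $P_t$ is strong Feller.

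Next I would combine these two properties. A standard fact (see, e.g., Doob's theorem, or Proposition~4.1.1 in Da Prato--Zabczyk, or \cite{PriolaSXZ-12,Qiao-14}) states that an irreducible, strong Feller Markov semigroup admits at most one invariant probability measure. The argument is the following: suppose $\mu_1$ and $\mu_2$ were two distinct invariant probability measures. By strong Feller plus irreducibility one shows that for every $t>0$ the transition kernels $P_t(x,\cdot)$ and $P_t(y,\cdot)$ are mutually equivalent for all $x,y\in\R^d$ (this is the classical ``regularity'' property: irreducibility gives $P_t(x,B)>0$ for open $B$, and strong Feller upgrades this to equivalence of the measures $P_t(x,\cdot)$). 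Consequently any two invariant measures, being fixed points of $P_t^*$, are mutually absolutely continuous. But distinct \emph{ergodic} invariant measures must be mutually singular; hence $\mu_1$ and $\mu_2$ cannot both be ergodic unless they coincide, and by the ergodic decomposition every invariant measure is a mixture of ergodic ones, forcing uniqueness.

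Concretely, I would write: let $\mu_1\neq\mu_2$ be invariant. Since each invariant measure can be written as a convex combination of ergodic invariant measures, it suffices to rule out two distinct ergodic invariant measures $\mu_1,\mu_2$. Ergodicity implies mutual singularity: there is a Borel set $A$ with $\mu_1(A)=1$, $\mu_2(A)=0$. On the other hand, fix $t>0$; by strong Feller and irreducibility, $\mu_i(A)=\int P_t(x,A)\mu_i(\d x)$ and one deduces that $x\mapsto P_t(x,A)$ is a continuous function taking the value $1$ $\mu_1$-a.s.\ and $0$ $\mu_2$-a.s.; equivalence of the kernels $P_t(x,\cdot)$ then forces $P_t(x,A)\in\{0,1\}$ to be constant, a contradiction with $\mu_1(A)=1$, $\mu_2(A)=0$. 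Hence at most one invariant measure exists.

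The main obstacle here is not really a deep one — the result is a routine application of well-known principles — but the step requiring the most care is establishing that the kernels $\{P_t(x,\cdot)\}_{x\in\R^d}$ are pairwise equivalent from irreducibility and the strong Feller property. This uses a now-standard lemma (often attributed to Doob, or to Khasminskii; see Da Prato--Zabczyk, \emph{Ergodicity for Infinite Dimensional Systems}, Proposition~4.1.1): if $P_t$ is strong Feller and irreducible then for all $x,y$ and all $t>0$, $P_t(x,\cdot)$ and $P_t(y,\cdot)$ are equivalent measures. I would simply cite this lemma rather than reprove it, since both ingredients (irreducibility in Lemma~\ref{lem-irreducibility}, strong Feller in Theorem~\ref{thm-strong-Feller}) are already available, and conclude that $P_t$ has at most one invariant probability measure.
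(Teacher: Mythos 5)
Your proposal is correct and takes essentially the same route as the paper: invoke Lemma~\ref{lem-irreducibility} for irreducibility and Theorem~\ref{thm-strong-Feller} for the strong Feller property, then apply the classical Doob--Khasminskii fact that an irreducible strong Feller semigroup has at most one invariant measure (the paper simply cites \cite{Cerrai-01} for this, whereas you sketch the standard argument). The one extra touch you add — explicitly checking that Assumption~\ref{Assumption-martingale-well-posed} holds via Theorem~\ref{thm-existence+uniquness} so that Lemma~\ref{lem-irreducibility} and Theorem~\ref{thm-strong-Feller} are actually applicable — is a useful bit of bookkeeping that the paper leaves implicit.
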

\begin{proof} It is well known (see, for example, \cite{Cerrai-01}) that if a semigroup $P_{t}$ is irreducible and strong Feller, then it admits at most one invariant measure.  Under the stated assumptions, the semigroup $P_{t}$ is irreducible (by Lemma \ref{lem-irreducibility}) and strong Feller (by Theorem \ref{thm-strong-Feller}). Therefore the uniqueness of the invariant measure follows immediately.
\end{proof}
\begin{lem}\label{lem-invariant-measure-existence} Let Assumptions \ref{assumption-non-linear-growth-sec-2} and either \ref{assumption1-non-lip-sec-2} or \ref{assume-non-lip-sec-2}
 hold. Suppose there exist a positive constant $\alpha$, a compact $C\subset \R^{d}$, a measurable function $f: \R^{d}\mapsto [1,\infty)$, and twice continuously differentiable function $V: \R^{d}\mapsto \R_{+}$ satisfying \begin{equation}
\label{eq-pos-rec-condition}
\LL V(x) \le -\alpha f(x) + I_{C}(x), \text{ for all } x\in \R^{d}.
\end{equation}
Then the process $X$ of \eqref{eq-sde-sec-2} has an invariant measure.
\end{lem}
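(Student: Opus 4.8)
The plan is to deduce the existence of an invariant probability measure from the Foster--Lyapunov condition \eqref{eq-pos-rec-condition} via the Krylov--Bogolyubov averaging procedure, so the first step is to record that the dynamics are well posed. Under Assumption \ref{assumption-non-linear-growth-sec-2} together with Assumption \ref{assumption1-non-lip-sec-2} or \ref{assume-non-lip-sec-2}, Theorem \ref{thm-existence+uniquness} yields a unique non-exploding strong solution of \eqref{eq-sde-sec-2} for every initial point; in particular uniqueness in law holds, Assumption \ref{Assumption-martingale-well-posed} is satisfied, and Proposition \ref{prop-Feller-old} makes $\{P_t\}$ a Feller semigroup. This is the only place the pathwise-uniqueness hypotheses are used.

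Next I would fix $x\in\R^d$, put $\tau_n:=\inf\{t\ge 0:|X(t)|\ge n\}$ (so $\tau_n\uparrow\infty$ a.s.\ by non-explosion), and apply It\^o's formula to $V(X(\cdot\wedge\tau_n))$; since the jump integral in $\LL V$ converges by hypothesis, the finite-variation part is exactly $\int_0^{\cdot}\LL V(X(s))\,\d s$, and after a routine further localization that reduces the local martingale part one may take expectations. Combining $V\ge 0$, $0\le I_C\le 1$ and \eqref{eq-pos-rec-condition}, and then letting $n\to\infty$ by monotone convergence, gives
\begin{displaymath}
\alpha\int_0^t P_s f(x)\,\d s\le V(x)+t\qquad\text{for all }t>0,
\end{displaymath}
so the occupation measures $\mu_t(\cdot):=\frac1t\int_0^t P_s(x,\cdot)\,\d s$ satisfy $\int_{\R^d}f\,\d\mu_t\le \frac{V(x)}{\alpha t}+\frac1\alpha$, a bound that is uniform for $t\ge 1$.

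The last, and I expect only genuinely substantive, step is to turn this moment bound into tightness of $\{\mu_t\}_{t\ge1}$. Here one uses that $f$ is inf-compact, i.e.\ $f(y)\to\infty$ as $|y|\to\infty$ (the operative part of the drift condition \eqref{eq-pos-rec-condition}): then $\mu_t(\{|y|>R\})\le \mu_t(\{f>m_R\})\le m_R^{-1}\big(\tfrac{V(x)}{\alpha}+\tfrac1\alpha\big)$ with $m_R:=\inf_{|y|>R}f(y)\to\infty$, so $\{\mu_t\}_{t\ge1}$ is tight. By Prokhorov's theorem there is a sequence $t_k\to\infty$ with $\mu_{t_k}\Rightarrow\mu$ for some probability measure $\mu$, and $\mu$ is identified as invariant in the standard way: for $g\in C_b(\R^d)$ and $r>0$ the Feller property gives $P_rg\in C_b(\R^d)$, hence $\int g\,\d(\mu_{t_k}P_r)=\int P_rg\,\d\mu_{t_k}\to\int P_rg\,\d\mu=\int g\,\d(\mu P_r)$, while $\|\mu_{t_k}P_r-\mu_{t_k}\|_{\mathrm{TV}}\le 2r/t_k\to0$ forces $\int g\,\d\mu=\int g\,\d(\mu P_r)$; thus $\mu P_r=\mu$ for every $r>0$. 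Alternatively, one may bypass the tightness step: \eqref{eq-pos-rec-condition} with $V\ge0$ already yields the finite return-time bound $\E_x[\tau_C]\le V(x)/\alpha$, and the existence of an invariant measure then follows from a standard Foster--Lyapunov/return-time criterion for non-explosive Feller processes (see \cite{MeynT-93III}), at the cost of making the associated regenerative construction for the jump-diffusion rigorous — which is the main technical nuisance along that route.
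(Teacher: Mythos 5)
Your overall plan — use Theorem~\ref{thm-existence+uniquness} and Proposition~\ref{prop-Feller-old} to obtain a well-posed, non-explosive Feller process, derive an occupation-measure estimate from It\^o's formula and \eqref{eq-pos-rec-condition}, and identify a Krylov--Bogolyubov limit as an invariant measure — is a reasonable route; the paper itself simply defers to the proof of Theorem~3.3 in \cite{Xi-04}, which runs along similar Foster--Lyapunov lines. However, your tightness step is not justified by the hypotheses as stated. You assert that $f$ is inf-compact, calling this ``the operative part of the drift condition,'' but the lemma only assumes that $f:\R^{d}\to[1,\infty)$ is measurable; $f\equiv1$ is admissible, in which case $m_R=1$ for every $R$ and your Markov-inequality estimate gives no tightness at all. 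As written, this is a genuine gap.

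The fix stays within what is actually assumed. The same It\^o/localization estimate, using only $V\ge0$ and $f\ge1$ (not coercivity of $f$), gives
\begin{displaymath}
\alpha t\;\le\;\alpha\,\E_x\!\Big[\int_0^{t}f(X(s))\,\d s\Big]\;\le\;V(x)+\E_x\!\Big[\int_0^{t}I_C(X(s))\,\d s\Big]\;=\;V(x)+t\,\mu_t(C),
\end{displaymath}
hence $\mu_t(C)\ge\alpha-V(x)/t$, a uniform \emph{lower} bound on the mass that the occupation measures place on the fixed compact $C$. Extract a vaguely convergent subsequence $\mu_{t_k}\to\mu$ (always possible on $\R^d$, no tightness required); the Portmanteau inequality for the closed set $C$ gives $\mu(C)\ge\limsup_k\mu_{t_k}(C)\ge\alpha>0$, so $\mu\neq0$. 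For $g\in C_c^{+}(\R^d)$, Feller continuity gives $P_rg\in C_b^{+}$, and approximating $P_rg$ from below by $C_c$ functions yields $\int P_rg\,\d\mu\le\liminf_k\int P_rg\,\d\mu_{t_k}=\lim_k\int g\,\d\mu_{t_k}=\int g\,\d\mu$, i.e.\ $\mu P_r\le\mu$. Since the process is conservative by Theorem~\ref{thm-no-explosion-sec-2}, $\mu P_r(\R^d)=\mu(\R^d)$, which forces $\mu P_r=\mu$; normalizing gives the invariant probability. Your alternative route via $\E_x[\tau_C]\le V(x)/\alpha$ is also sound and closer to the Meyn--Tweedie machinery the paper leans on, but note that invoking their positive-recurrence theorems requires $C$ to be petite, which is not free under the lemma's hypotheses alone.
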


\begin{proof} This lemma can be proved using exactly the same arguments as those in the proof of Theorem 3.3 in \cite{Xi-04}. For brevity, we shall omit the details here.
\end{proof}

A combination of Corollary \ref{corollary-invariant-measure-uniqueness} and Lemma \ref{lem-invariant-measure-existence} yields the following proposition:

\begin{prop}\label{prop-invariant-measure-existence+uniqueness}
Under the assumptions of Corollary \ref{corollary-invariant-measure-uniqueness} and Lemma \ref{lem-invariant-measure-existence},  the semigroup $P_{t}$  of \eqref{eq-semi-group} has a unique invariant measure.
\end{prop}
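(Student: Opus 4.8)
The plan is to combine the two facts already established, since this proposition is a direct consequence of Corollary~\ref{corollary-invariant-measure-uniqueness} and Lemma~\ref{lem-invariant-measure-existence}. Under the stated hypotheses, both sets of assumptions are in force simultaneously; note that Assumption~\ref{assume-non-lip-sec-2} appears in both, and it (together with Assumption~\ref{assumption-non-linear-growth-sec-2}) supplies, via Theorem~\ref{thm-existence+uniquness}, a unique non-exploding strong solution $X$ of \eqref{eq-sde-sec-2}, so that the transition semigroup $P_{t}$ of \eqref{eq-semi-group} is well defined. First I would invoke Lemma~\ref{lem-invariant-measure-existence}: the Foster--Lyapunov drift condition \eqref{eq-pos-rec-condition}, under Assumption~\ref{assumption-non-linear-growth-sec-2} and either Assumption~\ref{assumption1-non-lip-sec-2} or \ref{assume-non-lip-sec-2}, yields the existence of at least one invariant probability measure $\mu$ for $P_{t}$.

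Next I would invoke Corollary~\ref{corollary-invariant-measure-uniqueness}: under Assumption~\ref{assumption-non-linear-growth-sec-2} (with $\zeta \equiv 1$), Assumption~\ref{assume-non-lip-sec-2}, and the uniform ellipticity Assumption~\ref{Assump-Uniform-Elliptic}, the semigroup $P_{t}$ is irreducible by Lemma~\ref{lem-irreducibility} and strong Feller by Theorem~\ref{thm-strong-Feller}, hence admits at most one invariant measure by the classical dichotomy (e.g.\ \cite{Cerrai-01}). Putting ``at least one'' together with ``at most one'' forces exactly one invariant measure, namely the $\mu$ produced above, which is precisely the assertion. There is essentially no technical obstacle here: the only point that warrants an explicit sentence is to record that the two invocations refer to the same object (the invariant measure of the process $X$ coincides with the invariant measure of its transition semigroup $P_{t}$), so that the uniqueness conclusion of Corollary~\ref{corollary-invariant-measure-uniqueness} applies to the measure $\mu$ furnished by Lemma~\ref{lem-invariant-measure-existence}.
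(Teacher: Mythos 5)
Your proposal is correct and coincides with the paper's argument: the paper explicitly states that the proposition follows by combining Corollary~\ref{corollary-invariant-measure-uniqueness} (at most one invariant measure, via irreducibility and strong Feller) with Lemma~\ref{lem-invariant-measure-existence} (at least one invariant measure, via the Foster--Lyapunov condition). Your added remark that both invocations refer to the same semigroup is a sensible bookkeeping point but is not treated as an obstacle in the paper either.
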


For any positive function $f: \R^{d} \mapsto [1,\infty)$ and any signed measure $\nu$ defined on $\B(\R^{d} )$, we write
\begin{displaymath}
\|\nu\|_{f} : = \sup\{|\nu(g)|: g \in \B(\R^{d}) \text{ satisfying } |g| \le f \},
\end{displaymath} where $\nu(g) : = \int_{\R^{d}} g(x)\nu(\d x)$ is the integral of the function $g$ with respect to the measure $\nu$.
Note that the usual total variation norm $\|\nu\|_{\var}$ is just $\|\nu\|_{f}$ in the special case when $f\equiv 1$. For a function  $f:\R^{d}\mapsto [1, \infty)$, the process $X $ is said to be {\em $f$-exponentially ergodic}  if there exists a probability measure $\pi(\cdot) $, a constant $\theta  \in (0,1)$ and a finite-valued function $\Theta(x)$ such that
\begin{equation}
\label{eq-exp-ergodicity-defn}
\norm{P_{t}(x,\cdot) - \pi(\cdot)}_{f} \le \Theta(x) \theta^{t}
\end{equation}
for all $t\ge 0 $ and all $x \in \R^{d} $.

 \begin{thm}\label{thm-exp-ergodicity}
Suppose   Assumptions \ref{assumption-non-linear-growth-sec-2}  $($with $\zeta\equiv 1)$, \ref{assume-non-lip-sec-2},    and \ref{Assump-Uniform-Elliptic}  hold. In addition, assume that there exist positive numbers $\alpha, \beta$ and a nonnegative function $V\in C^{2}( \R^{d}) $ satisfying
\begin{itemize}
  \item[(i)] $V(x)\to \infty$ as $|x| \to \infty$,
    \item[(ii)] $\LL V(x) \le -\alpha V(x) + \beta$, $x \in  \R^{d} $.

\end{itemize} Then the process $X$ is $f$-exponentially ergodic with $f(x)= V(x) +1$.
\end{thm}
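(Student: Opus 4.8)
The plan is to invoke the general theory of exponential ergodicity for continuous-time Markov processes developed by Meyn and Tweedie (e.g.\ Theorem~6.1 of \cite{MeynT-93III}), which asserts that a process is $f$-exponentially ergodic provided three ingredients are in place: (a) the process is irreducible with respect to some reference measure; (b) every compact set is ``petite'' (or, equivalently in the presence of the strong Feller property and irreducibility, all compact sets are small); and (c) a Foster--Lyapunov drift condition of the form $\LL V \le -\alpha V + \beta$ holds off a petite set. Our hypotheses are tailored so that all three ingredients are available from results already established in the paper. First I would record that, under Assumption~\ref{assumption-non-linear-growth-sec-2} (with $\zeta\equiv 1$) and Assumption~\ref{assume-non-lip-sec-2}, Theorem~\ref{thm-existence+uniquness} gives a unique non-explosive strong solution, so the semigroup $P_t$ is well defined and the process is a well-defined strong Markov process; Lemma~\ref{lem-irreducibility} (which applies precisely because Assumptions~\ref{assumption-non-linear-growth-sec-2}, \ref{assume-non-lip-sec-2} and the uniform ellipticity in Assumption~\ref{Assump-Uniform-Elliptic} are in force) then yields irreducibility of $P_t$, giving ingredient~(a).

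Next I would address ingredient~(b). Under Assumption~\ref{Assump-Uniform-Elliptic}, Theorem~\ref{thm-strong-Feller} shows the process is strong Feller. The standard argument (see, e.g., \cite{MeynT-93III} or \cite{Cerrai-01}) is that an irreducible strong Feller process has the property that every compact set is petite; indeed the strong Feller property upgrades the irreducibility to the statement that for each compact $K$ and each $t>0$ the measures $P_t(x,\cdot)$, $x\in K$, are all bounded below by a common nontrivial measure, which is exactly smallness/petiteness of $K$. So compact sets are petite, giving ingredient~(b), and along the way Corollary~\ref{corollary-invariant-measure-uniqueness} already guarantees at most one invariant measure.

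For ingredient~(c), I would start from hypothesis~(ii), $\LL V(x) \le -\alpha V(x) + \beta$ for all $x\in\R^d$. Writing $f := V+1 \ge 1$, this reads $\LL V \le -\alpha f + (\alpha + \beta)$. Hypothesis~(i), $V(x)\to\infty$ as $|x|\to\infty$, ensures that $V$ is norm-like (has compact sublevel sets), so the set $C := \{x : \alpha V(x) \le \beta + 1\}$ is compact, hence petite, and on its complement we have $\LL V \le -\alpha V + \beta \le -1 < 0$; combining, $\LL V \le -\tfrac{\alpha}{2} f + b\, I_C$ for a suitable finite constant $b$ (using $f \le 1 + V$ and adjusting constants). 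This is exactly the Foster--Lyapunov drift condition (CD3) or (V4) of Meyn--Tweedie with Lyapunov function $V$ and rate function $f$; in particular Lemma~\ref{lem-invariant-measure-existence} already extracts existence of an invariant measure $\pi$ from it, and combined with the uniqueness from Corollary~\ref{corollary-invariant-measure-uniqueness} we get a unique invariant measure $\pi$ with $\pi(f) < \infty$.

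Finally, with irreducibility, petiteness of compacts, and the drift condition $\LL V \le -cV + d\,I_C$ in hand, Theorem~6.1 of \cite{MeynT-93III} (the $f$-norm ergodic theorem for continuous-time processes) delivers constants $\theta\in(0,1)$ and a finite function $\Theta(x)$ — in fact one may take $\Theta(x) = c'(V(x)+1)$ — such that $\|P_t(x,\cdot)-\pi(\cdot)\|_f \le \Theta(x)\theta^t$ for all $t\ge 0$ and $x\in\R^d$, which is precisely \eqref{eq-exp-ergodicity-defn} with $f = V+1$. The main obstacle, and the step that requires genuine care rather than citation, is the passage from the \emph{a priori} generator inequality $\LL V \le -\alpha V + \beta$ to a bona fide drift condition for the \emph{extended} generator along the process: one must justify that $V$ (which need only be $C^2$, not compactly supported) lies in the domain of the extended generator and that Dynkin's formula applies, typically by the usual localization through the exit times $\tau_R = \inf\{t : |X(t)| \ge R\}$ together with Theorem~\ref{thm-no-explosion-sec-2} guaranteeing $\tau_R \to \infty$, and by controlling the jump term $\int_U [V(x+c(x,u)) - V(x) - \langle DV(x), c(x,u)\rangle]\,\nu(\d u)$ using Assumption~\ref{assumption-non-linear-growth-sec-2}; once this localization is carried out, the cited ergodic theorem applies verbatim.
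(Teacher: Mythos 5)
Your proposal is correct and follows essentially the same Foster--Lyapunov/Meyn--Tweedie route as the paper: the paper passes from (i)--(ii) to the drift condition \eqref{eq-pos-rec-condition}, gets existence and uniqueness of the invariant measure from Proposition~\ref{prop-invariant-measure-existence+uniqueness}, and then simply refers to the argument of Theorem~6.3 of \cite{Xi-09} for the exponential convergence; your write-up merely unpacks what that citation entails (irreducibility from Lemma~\ref{lem-irreducibility}, strong Feller from Theorem~\ref{thm-strong-Feller}, petiteness of compacts, and the $f$-norm ergodic theorem of \cite{MeynT-93III}), including the localization/extended-generator caveat, which the paper leaves implicit.

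One small computational remark: your particular choice $C=\{x:\alpha V(x)\le\beta+1\}$ does not directly give $\LL V\le -\tfrac{\alpha}{2}f + b\,I_C$ for all parameter values; one should instead take $C=\{V\le M\}$ with $M$ chosen large enough depending on $\alpha$ and $\beta$ (compactness still follows from~(i)), or simply use the inequality in the unlocalized form $\LL V\le -\alpha(V+1)+(\alpha+\beta)$, which already has the required (V4)-type shape. This is a matter of constant bookkeeping and does not affect the validity of the argument.
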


\begin{proof}
Apparently conditions (i) and (ii) in the statement of the theorem imply \eqref{eq-pos-rec-condition} and hence the existence and uniqueness of an invariant measure $\pi$ follows from Proposition \ref{prop-invariant-measure-existence+uniqueness}.   Next we can use the same argument as those in the proof of Theorem 6.3 in \cite{Xi-09} to obtain the desired $f$-exponential ergodicity for the process $X$.
\end{proof}

\begin{rem} Note that the condition  \begin{equation}
\label{eq-Qiao's condition}
2\lan x,b(x)\ran +| \sigma(x)|^{2} + \int_{U} |c(x,u)|^{2} \nu(\d u) \le -\lambda_{3} |x|^{r} + \lambda_{4},
\end{equation} in which $\lambda_{3} > 0, \lambda_{4} \ge 0$ and $r \ge 2$, in Theorem 1.3 of   \cite{Qiao-14} is a special case of the drift condition in Theorem \ref{thm-exp-ergodicity}.
Indeed, the left hand side of \eqref{eq-Qiao's condition} is just the infinitesimal generator $\LL$ applied to the function $V(x)= |x|^{2}$. And since $r \ge 2$, we can find positive constants $\alpha$ and $\beta $ so that $-\lambda_{3} |x|^{r} + \lambda_{4} \le -\alpha |x|^{2} + \beta = -\alpha V(x) + \beta$ for all $x\in \R^{d}$. In other words, \eqref{eq-Qiao's condition}  implies the drift condition of Theorem  \ref{thm-exp-ergodicity}.
\end{rem}

\begin{exm}\label{example2} Let us consider the following SDE \begin{equation}
\label{eq-example2}
\d X(t) = b(X(t))\d t + \sigma(X(t))\d W(t) + \int_{U} c(X(t-), u) \wdt N(\d t, \d u), X(0) = x \in \R^{3},
\end{equation} where $W$ is a 3-dimensional standard Brownian motion, $\wdt N(\d t,\d u)$ is a compensated Poisson random measure with compensator $\d t\,\nu(\d u)$ on $[0,\infty)\times U$, in which $U=  \{u\in \R^{3}: 0 < |u| <1 \}$ and $\nu(\d u) : = \frac{\d u}{|u|^{3+\alpha}}$ for some $\alpha\in (0,2)$. The coefficients of \eqref{eq-example2} are given by
\begin{displaymath}
b(x)=\begin{pmatrix}-x_{1}^{1/3}-x_{1}^{3}\\ -x_{2}^{1/3}-x_{2}^{3}\\-x_{3}^{1/3}-x_{3}^{3}\\ \end{pmatrix}, \quad
\sigma(x) = \begin{pmatrix} 3 & 1 & 2\\ 2 & 3 & 1 \\ 1 & 2 & 3\end{pmatrix},\quad
c(x,u) = \begin{pmatrix} \gamma x_{1}^{2/3} |u| \\  \gamma x_{2}^{2/3} |u|\\ \gamma x_{3}^{2/3} |u| \end{pmatrix},
\end{displaymath} in which $\gamma$ is a positive constant so that $\gamma^{2} \int_{U} |u|^{2} \nu(\d u) = \frac12$.

   We claim that all conditions in Theorem \ref{thm-exp-ergodicity} are satisfied and hence the process $X$ of  \eqref{eq-example2} is exponentially ergodic. Indeed,  detailed calculations similar to those in \eqref{eq1-ex1-non-lin-growth} and \eqref{eq2-ex1-non-Lip} help to verify Assumptions \ref{assumption-non-linear-growth-sec-2}  $($with $\zeta\equiv 1)$ and \ref{assume-non-lip-sec-2}.   On the other hand, it is clear that the matrix $a(x) = \sigma(x)\sigma(x)^{T}= \begin{pmatrix} 14 & 11 & 11 \\ 11 & 14 & 11\\ 11 & 11 & 14\end{pmatrix}$ is uniformly positive definite. Moreover, using similar  calculations as those in \eqref{eq2-ex1-non-Lip}, we can verify    condition \eqref{eq-str-feller-condition} and hence Assumption \ref{Assump-Uniform-Elliptic}. Finally we turn to the drift condition stated in Theorem \ref{thm-exp-ergodicity}. To this end, we consider the function $V(x) = |x|^{2}$, $x\in \R^{d}$, which clearly satisfies condition (i) in the statement of Theorem \ref{thm-exp-ergodicity}. On the other hand, straightforward calculations 
   lead to
\begin{align*}
\LL V(x) & = 2\lan x, b(x)\ran + |\sigma(x)|^{2} + \int_{U} |c(x,u)|^{2} \nu(\d u) = -\frac32\sum_{j=1}^{3}x_{j}^{4/3} -2\sum_{j=1}^{3}x_{j}^{4} + 42\\
    &  \le -2\sum_{j=1}^{3}x_{j}^{4} + 42 \le - \alpha \sum_{j=1}^{3}x_{j}^{2} + \beta = -\alpha V(x) + \beta,
\end{align*}for all $x\in \R^{d}$ and some positive constants $\alpha, \beta$.  This gives condition (ii)  in the statement of Theorem \ref{thm-exp-ergodicity} and hence the claimed exponential ergodicity.
\end{exm}

\section{Applications}\label{sect-applications}
\subsection{SDEs driven by L\'evy processes}\label{sect-sdes-Levy noise}\label{sect-Levy sde} 
We consider the stochastic differential equation
\begin{equation}
\label{eq0-Levy noise SDE}
\d X(t) = \psi(X(t-)) \d L(t), \quad X(0) = x \in \R^{d},
\end{equation} where the function $\psi: \R^{d}\mapsto \R^{d\times d}$ is Borel  measurable and   $L\in \R^{d}$ is a   L\'evy process  
with triplet $( b, Q, \nu)$. That is, $b \in \R^{d}, Q\in \R^{d\times d}$ is symmetric and nonnegative definite, and $\nu$ is a L\'evy measure on $\R_{0}^{d}:= \R^{d}\setminus\{0\}$ with $\int_{\R^{d}_{0}} 1\wedge |u|^{2} \nu(\d u) < \infty$.
It is well known that if $\psi$ is locally Lipschitz, then pathwise uniqueness holds for  \eqref{eq0-Levy noise SDE}. Thus our focus in this section is to
derive   non-Lipschitz conditions under which    pathwise uniqueness still holds for  \eqref{eq0-Levy noise SDE}.

Thanks to  the   L\'evy-It\^{o} decomposition theorem (see, for example, Theorem 2.4.16 of \cite{APPLEBAUM}), we can write $L$ as:
\begin{displaymath}
L(t) = bt  + \sigma W(t) + \int_{\R^{d}_{0}} u I_{\{ |u| \le 1\}} \wdt N(t, \d u) + \int_{\R^{d}_{0}} u I_{\{ |u| > 1\}}  N(t, \d u) ,
\end{displaymath} where 
$W \in \R^{d}$ is a standard Brownian motion,  and $\sigma \in \R^{d\times d}$ satisfies  $\sigma \sigma^{T} = Q$.
Using this  L\'evy-It\^{o} decomposition, we can rewrite \eqref{eq0-Levy noise SDE} as
\begin{equation}
\label{eq-SDE-Levy-noise}\begin{aligned}
\d X(t) = &\  \psi (X(t-)) b \d t  +  \psi(X(t-)) \sigma \d W(t)  \\&+  \int_{\{ |u| \le 1\}}  \psi(X(t-))  u \wdt N(\d t, \d u) +  \int_{\{ |u| > 1\}}  \psi(X(t-))  u N(\d t, \d u) .
\end{aligned}\end{equation}

\begin{prop}\label{prop-levy noise sde} 
The following assertions hold:
\begin{enumerate}
  \item[(i)] Suppose there  exist a positive constant $K$ and a nondecreasing and continuously differentiable function $\zeta:[0,\infty) \mapsto [1,\infty)$ satisfying  \eqref{eq-zeta-sec-2} such that \begin{equation}
\label{eq-psi-super linear-levy sde}
|\psi(x)|^{2} \le K(|x|^{2}\zeta(|x|^{2}) + 1), \quad \text{ for all }x\in \R^{d}.
\end{equation} Then the solution to \eqref{eq0-Levy noise SDE} has no finite explosion time a.s.
  \item[(ii)]  Suppose that there  exist   positive constants $\delta_{0}, K$ and   a nondecreasing, continuous and concave function $\varrho: [0,\infty) \mapsto [0, \infty)$ satisfying \eqref{eq-varrho-conditions-Feller} and $r \le K \varrho(r)$ for all $r \in [0,\delta_{0}]$ such that \begin{equation}\label{eq-psi-non-lip-levy sde}
|\psi(x) - \psi(z)|^{2} \le K \varrho(|x-z|^{2}), \quad \text{ for all }x,z\in \R^{d} \text{ with } |x-z| \le \delta_{0}.
\end{equation} Then pathwise uniqueness holds for \eqref{eq0-Levy noise SDE}.
\end{enumerate}
\end{prop}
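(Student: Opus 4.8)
The plan is to recognize \eqref{eq-SDE-Levy-noise} as an instance of the SDEs already treated in Section~\ref{sect-non-explosion-pathwise}: the small-jump part ($|u|\le 1$) plays the role of the compensated integral term in \eqref{eq-sde-sec-2}, and the large-jump part ($|u|>1$) is handled by the interlacing procedure, exactly as in Corollary~\ref{cor-existence+uniqueness general sde}. Concretely, put $\tilde b(x)=\psi(x)b$, $\tilde\sigma(x)=\psi(x)\sigma$, $\tilde c(x,u)=\psi(x)u$, and take $U_0=\{u:|u|\le 1\}$, so that $\nu(U\setminus U_0)=\nu(\{|u|>1\})<\infty$ and $\int_{U_0}|u|^{2}\nu(\d u)<\infty$ since $\nu$ is a L\'evy measure. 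Throughout, the elementary bound $|AB|\le |A|\,|B|$ for $|A|=\sqrt{\tr(AA^{T})}$ will be used repeatedly.

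\emph{Part (i).} First I would verify that $\tilde b,\tilde\sigma,\tilde c$ restricted to $U_0$ satisfy Assumption~\ref{assumption-non-linear-growth-sec-2}. Using $2\langle x,\tilde b(x)\rangle\le |x|^{2}+|b|^{2}|\psi(x)|^{2}$, $|\tilde\sigma(x)|^{2}\le |\sigma|^{2}|\psi(x)|^{2}$ and $\int_{U_0}|\tilde c(x,u)|^{2}\nu(\d u)\le |\psi(x)|^{2}\int_{U_0}|u|^{2}\nu(\d u)$, the left-hand side of \eqref{eq-coeffs-non-linear-growth-sec-2} is at most $|x|^{2}+C|\psi(x)|^{2}$ with $C$ depending only on $b,\sigma,\nu$; combining with \eqref{eq-psi-super linear-levy sde} and $\zeta\ge 1$ gives a bound $\kappa(|x|^{2}\zeta(|x|^{2})+1)$. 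Hence Theorem~\ref{thm-no-explosion-sec-2} applies to the SDE driven only by the small (compensated) jumps, which therefore has no finite explosion time. Then, since there are a.s.\ only finitely many large jumps on any bounded time interval, the interlacing construction (as in the proof of Corollary~\ref{cor-existence+uniqueness general sde} and Theorem~6.2.9 of \cite{APPLEBAUM}) produces the solution of \eqref{eq0-Levy noise SDE} by patching non-exploding pieces across a.s.\ finitely many finite jumps; consequently the solution of \eqref{eq0-Levy noise SDE} does not explode in finite time.

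\emph{Part (ii).} I would first note that \eqref{eq-psi-non-lip-levy sde} forces $\psi$ to be (uniformly) continuous, because $\varrho(0+)=0$; this continuity is what makes the earlier results applicable. Next I would check Assumption~\ref{assume-non-lip-sec-2} for $\tilde b,\tilde\sigma,\tilde c$ on $U_0$: writing $\tilde b(x)-\tilde b(z)=(\psi(x)-\psi(z))b$, etc., and using $2|x-z|\,|b|\,|\psi(x)-\psi(z)|\le |x-z|^{2}+|b|^{2}|\psi(x)-\psi(z)|^{2}$, the left-hand side of \eqref{eq-Feller-condition} is bounded by $|x-z|^{2}+C|\psi(x)-\psi(z)|^{2}\le |x-z|^{2}+CK\varrho(|x-z|^{2})$; applying the hypothesis $r\le K\varrho(r)$ with $r=|x-z|^{2}$ (shrinking $\delta_0$ if necessary so that $|x-z|\le\delta_0$ entails $|x-z|^{2}\le\delta_0$) absorbs the first term, yielding $\kappa\,\varrho(|x-z|^{2})$. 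Since \eqref{eq-varrho-conditions-Feller} is assumed, Theorem~\ref{thm-path-uniqueness-sec-2} and the remark following it give pathwise uniqueness up to the explosion time for the small-jump SDE. Finally, the large-jump times and marks are functionals of the Poisson random measure alone, so two solutions of \eqref{eq0-Levy noise SDE} with the same initial datum agree before the first large jump, jump by the same amount at it, and so on; the interlacing structure thus propagates pathwise uniqueness to \eqref{eq0-Levy noise SDE}.

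The main obstacle is the large-jump part: since $\nu$ is only integrable against $1\wedge|u|^{2}$, the integral $\int_{\{|u|>1\}}|u|\,\nu(\d u)$ may be infinite, so one cannot simply compensate the large jumps and read \eqref{eq0-Levy noise SDE} off as a direct instance of \eqref{eq-sde-sec-2}. The resolution—using only $\nu(\{|u|>1\})<\infty$ to reduce, via interlacing, to the compensated small-jump SDE already covered by Theorems~\ref{thm-no-explosion-sec-2} and \ref{thm-path-uniqueness-sec-2}—is the crux; everything else is routine Hilbert--Schmidt norm estimates, the cross term coming from the drift $\psi(\cdot)b$ being precisely what the extra hypothesis $r\le K\varrho(r)$ is there to control.
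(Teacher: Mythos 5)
Your proposal is correct and matches the paper's (very terse) proof, which simply states that the assertions ``follow directly from applying Theorems \ref{thm-no-explosion-sec-2} and \ref{thm-path-uniqueness-sec-2} to \eqref{eq-SDE-Levy-noise}'' and omits the computations. You have filled in exactly those omitted details, and in particular correctly identified that the uncompensated large-jump term in \eqref{eq-SDE-Levy-noise} must be treated via the interlacing construction of Corollary~\ref{cor-existence+uniqueness general sde} (applying the theorems only to the compensated small-jump SDE) rather than by compensation, since $\int_{\{|u|>1\}}|u|\,\nu(\d u)$ need not be finite.
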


  Some common functions satisfying the conditions of Proposition \ref{prop-levy noise sde} (ii) include $\varrho(r) = r, r\log(\frac1r), r\log(\log(\frac1r)),r\log(\frac1r)\log(\log(\frac1r)), \dots$ for $r $ in a small neighborhood $(0,\delta_{0}]$ of $0$.
\begin{proof}
These assertions follow directly  from applying Theorems \ref{thm-no-explosion-sec-2},  \ref{thm-path-uniqueness-sec-2},  and Corollary \ref{cor-existence+uniqueness general sde}      to \eqref{eq-SDE-Levy-noise},
  respectively.  For brevity, we shall omit the straightforward computations here.
  \end{proof}

Next we consider  sufficient conditions for Feller and strong Feller properties for the weak solution $X$ to \eqref{eq0-Levy noise SDE}.

\begin{prop}\label{prop-Levy sde Feller} Assume that the L\'evy measure $\nu$ also satisfies $\int_{\{|u| \ge 1\}} |u|\nu(\d u) < \infty$ and that \eqref{eq0-Levy noise SDE} has a  unique   non-exploding weak solution for every initial condition.  Suppose also that there exist  positive constants $K,\delta_{0}$ and  a nondecreasing and concave function $\varrho: [0,\infty) \mapsto [0,\infty)$ satisfying \eqref{eq-varrho-conditions-Feller} and $r \le K \theta(r)$ for all $r \in[0,\delta_{0}]$ such that \begin{equation}\label{eq-psi-non-Lip} |\psi(x) - \psi(z)|^{2}  \le K |x-z| \varrho (|x-z|),\quad \text{ for all }x,z\in \R^{d} \text{ with } |x-z| \le \delta_{0}, \end{equation} where $\delta_{0} > 0$.  Then the weak solution $X$ to \eqref{eq0-Levy noise SDE} is Feller continuous.
In addition, suppose there exists a positive number $\lambda_{0}$ such that \begin{equation}
\label{eq-Levy sde uniform elliptic}
\lan \xi, \psi(x)Q\psi(x)^{T}\xi\ran \ge \lambda_{0}|\xi|^{2}, \text{ for all }x,\xi\in \R^{d}.
\end{equation} Then the weak solution $X$ to \eqref{eq0-Levy noise SDE} is strong Feller continuous.
\end{prop}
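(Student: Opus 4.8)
\textbf{Proof plan for Proposition~\ref{prop-Levy sde Feller}.} The strategy is to recast the L\'evy-driven equation \eqref{eq0-Levy noise SDE} as an SDE of the type \eqref{eq-sde-sec-2} and then apply the Feller and strong Feller results already established. By the L\'evy--It\^o decomposition we get \eqref{eq-SDE-Levy-noise}; since $\int_{\{|u|\ge1\}}|u|\nu(\d u)<\infty$, the large-jump integral may be compensated at the cost of the finite extra drift $\psi(x)\int_{\{|u|>1\}}u\,\nu(\d u)$ (alternatively one keeps \eqref{eq-SDE-Levy-noise} and appeals to the interlacing construction of Corollary~\ref{cor-existence+uniqueness general sde}, the large-jump part contributing only finitely many jumps on bounded time intervals). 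Either way the weak solution $X$ solves an equation of the form \eqref{eq-sde-sec-2} with $\widetilde b(x)=\psi(x)\bigl(b+\int_{\{|u|>1\}}u\,\nu(\d u)\bigr)$, $\widetilde\sigma(x)=\psi(x)\sigma$ and $\widetilde c(x,u)=\psi(x)u$, $u\in\R^d_0$. One preliminary fact I would record first: \eqref{eq-psi-non-Lip} forces $|\psi|$ to be bounded on every ball --- iterating the estimate $|\psi(x)-\psi(z)|\le (K\delta_0\varrho(\delta_0))^{1/2}$ (valid when $|x-z|\le\delta_0$, since $r\mapsto r\varrho(r)$ is nondecreasing) along $\lceil R/\delta_0\rceil$ equally spaced collinear points shows $M_R:=\sup_{|x|\le R}|\psi(x)|<\infty$ for every $R>0$.

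For Feller continuity I would verify Assumption~\ref{assumption-feller-new condition} and quote Theorem~\ref{thm-feller new} (Assumption~\ref{Assumption-martingale-well-posed} being exactly the well-posedness hypothesis of the proposition). Since $\widetilde c(x,u)-\widetilde c(z,u)=(\psi(x)-\psi(z))u$, splitting the truncated integral in \eqref{eq-feller-new condition} at $|u|=1$ and using $|\widetilde c(x,u)-\widetilde c(z,u)|^2\le|\psi(x)-\psi(z)|^2|u|^2$ for $|u|\le1$ and $4|x-z|\,|\widetilde c(x,u)-\widetilde c(z,u)|\le4|x-z|\,|\psi(x)-\psi(z)|\,|u|$ for $|u|>1$ gives $\int_U[\cdot]\,\nu(\d u)\le c_1|\psi(x)-\psi(z)|^2+c_2|x-z|\,|\psi(x)-\psi(z)|$ with $c_1=\int_{\{|u|\le1\}}|u|^2\nu(\d u)<\infty$ and $c_2=4\int_{\{|u|>1\}}|u|\nu(\d u)<\infty$. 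Adding $|\widetilde\sigma(x)-\widetilde\sigma(z)|^2\le|\sigma|^2|\psi(x)-\psi(z)|^2$ and $2\langle x-z,\widetilde b(x)-\widetilde b(z)\rangle\le c_3|x-z|\,|\psi(x)-\psi(z)|$, and then substituting $|\psi(x)-\psi(z)|\le(K|x-z|\varrho(|x-z|))^{1/2}$ from \eqref{eq-psi-non-Lip} while using the hypothesis $r\le K\varrho(r)$ to absorb each cross term $|x-z|\,|\psi(x)-\psi(z)|$ into $|x-z|\varrho(|x-z|)$, the left-hand side of \eqref{eq-feller-new condition} is $\le\kappa_R|x-z|\varrho(|x-z|)$, with the local constant $M_R$ entering only through $\kappa_R$ (which is permitted). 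Theorem~\ref{thm-feller new} then yields the Feller property.

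For strong Feller continuity I would invoke the localized Proposition~\ref{prop-str-Feller-local version} rather than Theorem~\ref{thm-strong-Feller}, since $\psi$ is only locally bounded. The decisive point is to take the ellipticity parameter of that proposition to be $\lambda_0/2$: by \eqref{eq-Levy sde uniform elliptic} the diffusion matrix $\widetilde a(x)=\psi(x)Q\psi(x)^T$ satisfies $\widetilde a(x)\ge\lambda_0 I\ge\tfrac{\lambda_0}{2}I$, so $\sigma_{\lambda_0/2}^2=\widetilde a-\tfrac{\lambda_0}{2}I\ge\tfrac{\lambda_0}{2}I$ is uniformly positive definite and hence $|\sigma_{\lambda_0/2}(x)-\sigma_{\lambda_0/2}(z)|\le C(\lambda_0)\,|\widetilde a(x)-\widetilde a(z)|$ by the Lipschitz property of the matrix square root on uniformly positive definite matrices. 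Writing $\widetilde a(x)-\widetilde a(z)=(\psi(x)-\psi(z))Q\psi(x)^T+\psi(z)Q(\psi(x)-\psi(z))^T$ and bounding $|\psi(x)|,|\psi(z)|$ by $M_R$ on $|x|\vee|z|\le R$ gives $|\sigma_{\lambda_0/2}(x)-\sigma_{\lambda_0/2}(z)|^2\le C_R|\psi(x)-\psi(z)|^2\le C_RK|x-z|\varrho(|x-z|)$; the truncated jump integral and the drift term are handled exactly as in the Feller step. Thus the localized version of \eqref{eq-str-feller-condition} holds with $\vartheta(r)=\varrho(r)$, which tends to $0$ as $r\downarrow0$ because $\varrho$ is nondecreasing with $\varrho(0+)=0$ (a consequence of $\int_{0+}\d r/\varrho(r)=\infty$), and Proposition~\ref{prop-str-Feller-local version} delivers the strong Feller property.

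The main obstacle is precisely the control of the square-root term $|\sigma_{\lambda_0}(x)-\sigma_{\lambda_0}(z)|^2$. With the original parameter $\lambda_0$ the matrix $\widetilde a-\lambda_0 I$ may be degenerate, so only the H\"older bound $\|\sqrt A-\sqrt B\|\le\|A-B\|^{1/2}$ is available; combined with \eqref{eq-psi-non-Lip} this gives at best $|\sigma_{\lambda_0}(x)-\sigma_{\lambda_0}(z)|^2\le C_R|x-z|^{1/2}\varrho(|x-z|)^{1/2}$, and since $\varrho(r)\ge r/K$ the quantity $|x-z|^{-1/2}\varrho(|x-z|)^{1/2}$ stays bounded away from $0$, so no admissible $\vartheta$ exists and strong Feller is out of reach along this route. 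Downshifting the ellipticity constant to $\lambda_0/2$, which is free, restores the genuine Lipschitz estimate for the square root; this, together with the truncation $|c(x,u)-c(z,u)|^2\wedge(4|x-z|\,|c(x,u)-c(z,u)|)$ already built into Assumptions~\ref{assumption-feller-new condition} and \ref{Assump-Uniform-Elliptic} --- which is exactly what accommodates a heavy-tailed L\'evy measure via the hypothesis $\int_{\{|u|\ge1\}}|u|\nu(\d u)<\infty$ --- is what makes the argument go through.
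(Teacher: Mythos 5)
Your proposal follows the same route as the paper: recast the L\'evy SDE in the form \eqref{eq-sde-sec-2}, verify Assumptions \ref{assumption-feller-new condition} and \ref{Assump-Uniform-Elliptic}, and invoke Theorems \ref{thm-feller new} and \ref{thm-strong-Feller}. Your Feller argument matches the paper's computation essentially line for line (split the jump integral at $|u|=1$, use the squared term on the small jumps and the linear term on the large ones, absorb cross terms via $r\le K\varrho(r)$). Where your writeup goes beyond the paper is the strong Feller step, and the two points you raise are genuine. The paper verifies a bound on $|\widetilde\sigma(x)-\widetilde\sigma(z)|^{2}$ and then simply asserts that Assumption \ref{Assump-Uniform-Elliptic} follows, but that assumption requires a bound on $|\sgla(x)-\sgla(z)|^{2}$; since $\widetilde a-\lambda_{0}I$ can degenerate, only the $1/2$-H\"older estimate on the matrix square root is available globally, and the resulting modulus $|x-z|^{1/2}\varrho(|x-z|)^{1/2}$ is not of the form $|x-z|\vartheta(|x-z|)$ with $\vartheta(0+)=0$. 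Downshifting the ellipticity constant to $\lambda_{0}/2$, so that $\widetilde a-(\lambda_{0}/2)I\ge(\lambda_{0}/2)I$ and the square root becomes Lipschitz on that cone, is the right repair, and is what the paper must implicitly intend. Likewise, since \eqref{eq-psi-non-Lip} only gives $\psi$ at most linear growth, the Lipschitz constant of $x\mapsto\widetilde a(x)$ on $\{|x|\le R\}$ scales with $M_{R}=\sup_{|x|\le R}|\psi(x)|$, so the global Theorem \ref{thm-strong-Feller} does not apply directly and the localized Proposition \ref{prop-str-Feller-local version} is indeed the reference to cite. Both observations strengthen the paper's one-line justification without changing the conclusion.
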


\begin{proof} For the proof of Feller property, it is enough to verify that the coefficients of \eqref{eq-SDE-Levy-noise} satisfy Assumption \ref{assumption-feller-new condition}. Apparently \eqref{eq-psi-non-Lip} and the condition $r \le K \varrho (r)$ for all $r \in[0,\delta_{0}]$ imply that  $|x-z|  |\psi(x) - \psi(z)| \le K |x-z| \varrho (|x-z|)$ and hence
\begin{displaymath}
\lan x-z, (\psi(x) - \psi(z) )b\ran + |(\psi(x) - \psi(z) )\sigma|^{2} \le K |x-z| \varrho (|x-z|), \text{ for all } 
 |x-z| \le \delta_{0}.
\end{displaymath} On the other hand,
\begin{align*}
  \int_{\R_{0}^{d}}  &    |\psi(x) u - \psi(z)   u|^{2}  \wedge (4|x-z| |\psi(x)u - \psi(z)u|) \nu(\d u)  \\
    &   \le |\psi(x) -\psi(z)|^{2} \int_{\R_{0}^{d}}  |u|^{2} I_{\{ |u| \le 1\}}\nu(\d u) + 4|x-z| |\psi(x) - \psi(z)| \int_{\R_{0}^{d}}  |u| I_{\{ |u| > 1\}}\nu(\d u)\\
    & \le  K  |x-z| \varrho (|x-z|).
\end{align*} A combination of the above displayed equations gives \eqref{eq-feller-new condition} and hence verifies Assumption \ref{assumption-feller-new condition}. Then  we derive the Feller property for $X$   by Theorem \ref{thm-feller new}.

Concerning the strong Feller property, \eqref{eq-Levy sde uniform elliptic} and the calculations in the previous paragraph guarantee  that Assumption \ref{Assump-Uniform-Elliptic} is  satisfied and thus the desired strong Feller property holds true thanks to Theorem \ref{thm-strong-Feller}. \end{proof}

\subsection{L\'evy Type Operator and Feynman-Kac Formula}\label{sect-Feynman-Kac}
We consider the L\'evy type operator
\begin{equation}
\label{eq-Levy type operator}    \begin{aligned}
\mathcal{L}  f(x  ) & =  \frac{1}{2} \sum_{j,k=1}^{d}a_{jk}(x  ) \frac{\partial^{2}}{\partial x_{j}\partial x_{k}}  f(x  ) + \sum_{j=1}^{d} b_{j}(x  ) \frac{\partial }{\partial x_{j}} f(x  )    \\ 
 & \quad + \int_{\R^{d}_{0}} [f(x+y) - f(x  ) -  y\cdot Df(x  )] \nu(x  , \d y), \end{aligned}
\end{equation} in which $a(x) = (a_{jk}(x)) \in \R^{d\times d}$ is measurable, symmetric and nonnegative definite for all $x\in \R^{d}$,  $f\in C^{2}_{c}(\R^{d})$ and $\nu(x,\d y)$ is a L\'evy measure satisfying $\int_{\R^{d}_{0}} |y| \wedge |y|^{2} \nu(x,\d y) < \infty$ for all $x\in \R^{d}$. In addition, we assume that  there exist a positive constant $K$ and  a nondecreasing function $\zeta: [0,\infty) \mapsto [1,\infty)$ that is continuously differentiable and satisfies \eqref{eq-zeta-sec-2} so that
\begin{equation}
\label{eq-non-explosion-sect-5}
 2\lan x,b(x)\ran +\tr( a(x)) + \int_{\R_{0}^{d}} |y|^{2} \nu(x,\d y) \le K(|x|^{2}\zeta(|x|^{2}) + 1), \text{ for all }x \in \R^{d}.
\end{equation}

We wish to establish a Feynman-Kac formula for the solution to the Cauchy problem related to the L\'evy type operator $\LL$ of \eqref{eq-Levy type operator}:
\begin{equation}
\label{eq-Cauchy-terminal}
\left\{\!\!\begin{array}{rlll}
 \displaystyle  \frac{\partial }{\partial t} u(t,x) + \LL  u(t,x) - \rho(t,x) u(t,x) & \!\!=&\!\! g(t,x),     &  (t,x) \in [0,T) \times \R^{d}, \\
     u(T,x) &  \!\!=&\!\!  f(x) , &     x \in \R^{d},
\end{array}\right.
\end{equation} where
  the functions $\rho(\cdot,\cdot) \ge 0$, $g (\cdot,\cdot)$, and $f(\cdot)$ are continuous, and $ \LL  u(t,x)$ is interpreted as the operator $\LL$  applied to the function $x\mapsto u(t, x)$ and thus in particular, we require
    \begin{displaymath}
\int_{\R_{0}^{d}} |u(t,x+y) - u(t,x) - y\cdot D_{x} u(t, x)| \nu(x, \d y)  < \infty, \text{ for all } x \in \R^{d}.
\end{displaymath}

Let us first present the following lemma whose proof can be found in the Appendix \ref{sect-appendix}.
 \begin{lem}\label{lem-nu-representation}
  There exist  a measurable function   
  $c: \R^{d}  \times U \mapsto \R^{d}$  and a $\sigma$-finite measure ${M}$ on a measurable space $(U,\mathfrak U)$ such that
\begin{equation}
\label{eq-nu-measure-representation}
   \nu(x,  \Gamma) = \int_U I_{\Gamma}( c (x, u))   M(\d u), 
\end{equation}    for all   $ x\in \R^{d} $   and   $\Gamma \in \B(\R^{d}_{0})$. Consequently the  operator $\mathcal L$ of \eqref{eq-Levy type operator} can be rewritten as
\begin{equation}\label{eq-Lk-alternative-form}\begin{aligned}
 \mathcal{L} f(x)  =& \   \frac{1}{2} \sum_{j,k=1}^{d}a_{jk}(x) \frac{\partial^{2}}{\partial x_{j}\partial x_{k}}  f(x)  + \sum_{j=1}^{d} b_{j}(x) \frac{\partial }{\partial x_{j}} f(x) \\
 &  +   \int_{U} [f(x+c(x, u)) - f(x) -   c(x, u)\cdot Df(x)] M(\d u).
\end{aligned}\end{equation}
\end{lem}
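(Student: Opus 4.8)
The plan is to exhibit $(U,\mathfrak U,M)$ and $c$ by hand, realising every $\nu(x,\cdot)$ as the image of one fixed $\sigma$-finite measure under a jointly measurable map; the representation \eqref{eq-Lk-alternative-form} of $\LL$ will then drop out of the change-of-variables formula for pushforwards. Throughout I would use the standing (and implicit) assumption that $x\mapsto\nu(x,B)$ is Borel measurable for every $B\in\B(\R^d_0)$, which is what makes \eqref{eq-Levy type operator} meaningful. First I would write $\R^d_0$ as the disjoint union of the annulus $\{|y|\ge1\}$ and the annuli $\{2^{-n}\le|y|<2^{-n+1}\}$, $n\ge1$, each an uncountable standard Borel space, so that Kuratowski's isomorphism theorem provides a Borel isomorphism $g\colon\R^d_0\to(0,1)$ with $g(\{|y|\ge1\})\subset(0,\tfrac12)$ and $g(\{2^{-n}\le|y|<2^{-n+1}\})\subset(1-2^{-n},1-2^{-n-1})$ for $n\ge1$. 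Pushing the kernel forward, $\widetilde\nu(x,\cdot):=\nu(x,g^{-1}(\cdot))$ is a $\sigma$-finite Borel measure on $(0,1)$; the choice of $g$ together with the Lévy bound $\int(|y|\wedge|y|^2)\,\nu(x,\d y)<\infty$, which forces $\nu(x,\{|y|\ge\varepsilon\})<\infty$ for every $\varepsilon>0$, guarantees $\widetilde\nu(x,(0,t])<\infty$ for every $t<1$, while $\ell(x):=\widetilde\nu(x,(0,1))=\nu(x,\R^d_0)\in[0,\infty]$ may be infinite.

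Next I would introduce $F_x(t):=\widetilde\nu(x,(0,t])$ for $t\in(0,1)$, which is nondecreasing and right-continuous in $t$ and, by kernel measurability, Borel in $x$, together with its generalized inverse $q_x(s):=\inf\{t\in(0,1):F_x(t)\ge s\}$ for $s\in(0,\ell(x))$. The standard equivalence $q_x(s)\le t\iff F_x(t)\ge s$ for right-continuous increasing $F_x$ shows that $q$ is jointly Borel measurable and that $q_x$ pushes Lebesgue measure on $(0,\ell(x))$ forward to $\widetilde\nu(x,\cdot)$. I then set $U:=(0,\infty)$, $\mathfrak U:=\B((0,\infty))$, and $M:=$ Lebesgue measure on $(0,\infty)$ --- manifestly $\sigma$-finite, and adequate to carry a Poisson random measure with intensity $\d t\,M(\d u)$ as in \eqref{eq-sde-sec-2} --- and define $c(x,u):=g^{-1}(q_x(u))$ for $0<u<\ell(x)$ and $c(x,u):=0$ otherwise; joint measurability of $c$ follows from that of $q$. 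For any $\phi\colon\R^d\to\R$ with $\phi(0)=0$, chaining the two changes of variables gives
\[
\int_U\phi(c(x,u))\,M(\d u)=\int_0^{\ell(x)}\phi\!\left(g^{-1}(q_x(u))\right)\d u=\int_{(0,1)}\phi\!\left(g^{-1}(t)\right)\widetilde\nu(x,\d t)=\int_{\R^d_0}\phi(y)\,\nu(x,\d y),
\]
the range $u\ge\ell(x)$ contributing nothing because $\phi(0)=0$. Taking $\phi=I_\Gamma$ with $\Gamma\in\B(\R^d_0)$ yields \eqref{eq-nu-measure-representation}.

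Finally, applying the displayed identity with $\phi(y)=f(x+y)-f(x)-y\cdot Df(x)$, which vanishes at $y=0$ and, for $f\in C^2_c(\R^d)$, is $\nu(x,\cdot)$-integrable (Taylor's theorem near the origin, and boundedness of $f$ together with $\int(|y|\wedge|y|^2)\,\nu(x,\d y)<\infty$ far from it), converts the nonlocal part of \eqref{eq-Levy type operator} into the nonlocal part of \eqref{eq-Lk-alternative-form}; the second-order and drift parts are untouched, so \eqref{eq-Lk-alternative-form} follows. The only genuinely delicate point is the measurability bookkeeping --- that $g$ may be chosen with the stated annular behaviour and, above all, that $q$ (hence $c$) is jointly measurable in $(x,u)$, which hinges on the kernel measurability of $\nu$; everything else is a routine change of variables, and $\sigma$-finiteness of $M$ is built into the choice $U=(0,\infty)$ with Lebesgue measure.
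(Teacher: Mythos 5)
Your proof is correct but takes a genuinely different route from the paper's. The paper partitions $\R^d_0$, using $\sigma$-finiteness and a measurable selection theorem, into countably many pieces $A_n$ of $\nu(x,\cdot)$-mass at most one, normalizes on each, realizes each normalized law via an abstract Borel ``sampling'' map $h\colon\mathcal P(E)\times[0,1]\to E$, and then folds the weights $\nu_n(x,\R^d_0)$ into an indicator by integrating over an auxiliary $[0,1]$ variable, so that the representing space is $U=\R\times[0,1]$ with $M$ Lebesgue. You instead exploit the L\'evy integrability $\int(|y|\wedge|y|^2)\,\nu(x,\d y)<\infty$: the Borel isomorphism $g$ is engineered so that the pushed-forward kernel on $(0,1)$ has an everywhere-finite distribution function $F_x$, and its generalized inverse $q_x$ realizes $\nu(x,\cdot)$ as a pushforward of Lebesgue on $(0,\ell(x))\subset(0,\infty)$. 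Your construction is more concrete and self-contained (no measurable selection theorem, no black-box Skorokhod sampler, a single scalar parameter $U=(0,\infty)$), but is tailored to kernels that are finite away from the origin, whereas the paper's argument works for any $\sigma$-finite kernel. Two cosmetic points worth flagging: $g$ as you describe it is a Borel isomorphism onto a \emph{proper} Borel subset of $(0,1)$ (it misses the countably many annulus-boundary images), so $g^{-1}$ should be extended off its range by an arbitrary fixed value --- harmless, since $q_x$ lands there only on a Lebesgue-null set of $u$; and the key equivalence $q_x(s)\le t\iff F_x(t)\ge s$, which you use both for the joint measurability of $q$ and for the pushforward identity, is valid precisely because $F_x$ is nondecreasing and right-continuous, so that $\{t:F_x(t)\ge s\}$ is a closed half-interval $[q_x(s),1)$.
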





Lemma \ref{lem-nu-representation}  now enables us to derive a stochastic differential equation corresponding to the L\'evy type operator $\LL$ of \eqref{eq-Levy type operator}. Indeed,
let $N$ be a Poisson random measure  on $U \times [0,\infty)$ with mean measure $\nu(\d u)\d t$ and denote its compensator measure by $\wdt N(\d u,\d t)= N(\d u, \d t)-\nu(\d u)\d t $.
 Let $\sigma: \R^{d} \mapsto \R^{d\times d } $ be a measurable  square root of $a$ so that $\sigma \sigma' (x) = a(x)$ for all $x \in \R^{d}\times \ss$.
Consider the following stochastic differential equation
 \begin{equation}\label{eq-X-sde-sect5}\begin{aligned}
   X(s) &   = x + \int_{t}^{s} b(X(s) ) \d s +
   \int_{t}^{s}  \sigma(X(s) ) \d W(s)   +  \int_{t}^{s}\!\int_{U} c (X(s-), u) \wdt N(\d u, \d s), \  s \ge t,
\end{aligned} \end{equation}
where $(t,x)\in[0,\infty)\times \R^{d}$ and $W$ is a standard $d$-dimensional Brownian motion.

 \begin{Assumption}\label{assumption-sect-5}  For any $(t,x)\in[0,\infty)\times \R^{d}$, the SDE \eqref{eq-X-sde-sect5} has a unique weak solution $((\Omega, \F, \P), \{\F_{s}\}_{s\ge t}, (W, N), X)$, in which  $(\Omega, \F, \P)$ is a probability space,  $\{\F_{s}\}_{s\ge t}$ is a filtration of $\F$ satisfying the usual condition, $W$ is an  $\{\F_{s}\}_{s\ge t}$-adapted Brownian motion, $N$ is an  $\{\F_{s}\}_{s\ge t}$-adapted Poisson random measure, and $X$ satisfies \eqref{eq-X-sde-sect5}. For simplicity, we   denote the weak solution by   $X= X^{t, x}$.   \end{Assumption}

 Note that Assumption \ref{assumption-sect-5} is equivalent to that the martingale problem for the  infinitesimal generator $\LL$ of \eqref{eq-Levy type operator} is well-posed for any initial condition $(t,x)\in [0,\infty) \times \R^{d}$;  see, for example, Theorem 2.3 of \cite{Kurtz-11}. We refer to \cite{Stroock-75} and \cite{Koma-73} for  investigations of the well-posedness of martingale problems for L\'evy type operators.

\begin{thm}\label{thm-FK-terminal}  Let Assumption \ref{assumption-sect-5} be satisfied.
Let $T >0$. Suppose that $u(\cdot,\cdot): [0,T]\times \R^{d}   \mapsto \R $ is of class $C^{1,2}([0,T) \times \R^{d}) \cap C_{b}([0,T]\times \R^{d})$  and satisfies
  the Cauchy  problem \eqref{eq-Cauchy-terminal}.
Assume that the functions $f, g$ are   uniformly bounded.
 Then we have 
\begin{equation}
\label{eq-FK-stoch-soln-terminal}
\begin{aligned}
u(t,x) =   \E_{t,x}   \biggl[ e^{-\int_{t}^{T} \rho(r, X(r)) dr } f(X(T))
-   \int_{t}^{T} \! e^{-\int_{t}^{s} \rho(r, X(r)) dr }  g(s,X(s)) \d s   \biggl], \   0\le t \le T, x\in\R^{d}.
\end{aligned}\end{equation}
\end{thm}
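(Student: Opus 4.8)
The plan is the standard one: apply It\^o's formula to the discounted process $s\mapsto e^{-\int_t^s \rho(r,X(r))\,\d r}\,u(s,X(s))$ along the weak solution $X=X^{t,x}$ of \eqref{eq-X-sde-sect5}, identify the drift part via the Cauchy problem \eqref{eq-Cauchy-terminal}, show the local-martingale part is a genuine martingale, and take expectations. First I would fix $(t,x)$ and, to be safe, work on the interval $[t,T-\e]$ for $\e>0$ and localize with the exit times $\tau_R:=\inf\{s\ge t: |X(s)|\ge R\}$. Writing $Y(s):=e^{-\int_t^s\rho(r,X(r))\,\d r}$ and applying the It\^o formula for jump diffusions to $Y(s)u(s,X(s))$, the finite-variation part produces exactly
\[
Y(s)\Bigl[\partial_s u(s,X(s)) + \LL u(s,X(s)) - \rho(s,X(s))u(s,X(s))\Bigr]\,\d s = Y(s)\, g(s,X(s))\,\d s,
\]
using \eqref{eq-Cauchy-terminal}; here $\LL$ is the L\'evy type operator \eqref{eq-Levy type operator}, which by \lemref{lem-nu-representation} coincides with the generator of \eqref{eq-X-sde-sect5}. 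The martingale part consists of a Brownian stochastic integral $\int_t^s Y(r)\langle D_x u(r,X(r)),\sigma(X(r))\,\d W(r)\rangle$ and a compensated Poisson integral $\int_t^s\int_U Y(r)[u(r,X(r-)+c(X(r-),u))-u(r,X(r-))]\,\wdt N(\d u,\d r)$.

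The main work is to justify that, after localization removed, the martingale terms have zero expectation. On $[t,\tau_R]$ everything is bounded (since $u\in C_b$, $D_xu$ is continuous hence bounded on compacts, $\sigma$ is continuous, and $\int_U|c(x,u)|^2 M(\d u)$ is continuous), so the stopped martingale part has zero expectation and we get, for each $R$ and each $\e>0$,
\[
Y(t)u(t,x) = \E_{t,x}\Bigl[Y((T-\e)\wedge\tau_R)\,u((T-\e)\wedge\tau_R, X((T-\e)\wedge\tau_R))\Bigr] + \E_{t,x}\Bigl[\int_t^{(T-\e)\wedge\tau_R} Y(s) g(s,X(s))\,\d s\Bigr].
\]
Since $Y(t)=1$, this is the identity we want in localized form. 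Letting $R\to\infty$ uses Assumption~\ref{assumption-sect-5} together with the nonexplosion estimate: the drift condition \eqref{eq-non-explosion-sect-5} is precisely the hypothesis of \thmref{thm-no-explosion-sec-2} (applied to the coefficients of \eqref{eq-X-sde-sect5} via \lemref{lem-nu-representation}), so $\tau_R\to\infty$ a.s. Because $u$ and $g$ are uniformly bounded and $0\le Y(s)\le 1$, the integrand $Y(s)g(s,X(s))$ is uniformly bounded and $|Y((T-\e)\wedge\tau_R)u(\cdots)|\le\|u\|_\infty$, so dominated convergence lets us pass $R\to\infty$ to obtain
\[
u(t,x) = \E_{t,x}\Bigl[Y(T-\e)\,u(T-\e, X(T-\e))\Bigr] + \E_{t,x}\Bigl[\int_t^{T-\e} Y(s) g(s,X(s))\,\d s\Bigr].
\]

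Finally I would let $\e\downarrow 0$. The second term converges by dominated convergence ($Y(s)g(s,X(s))$ bounded, $T-\e\uparrow T$). For the first term, $u\in C_b([0,T]\times\R^d)$ and $X$ has right-continuous-with-left-limits paths that are, in particular, continuous at the fixed time $T$ a.s. (a fixed time is not a jump time, since $N$ has no atoms at deterministic times), so $u(T-\e,X(T-\e))\to u(T,X(T))=f(X(T))$ a.s.; boundedness of $u$ again gives convergence of the expectation. Also $Y(T-\e)\to Y(T)=e^{-\int_t^T\rho(r,X(r))\,\d r}$ by continuity of $\rho$ and boundedness on the compact path segment $[t,T]$. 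Combining the limits yields \eqref{eq-FK-stoch-soln-terminal}. The step I expect to be most delicate is not any single estimate but the clean simultaneous handling of the two localizations ($\tau_R$ for nonexplosion and $T-\e$ to stay inside the region $[0,T)$ where $u$ is $C^{1,2}$): one must order the limits as $R\to\infty$ first, then $\e\downarrow0$, and verify that the compensated-jump integral over $U$ is a true martingale on $[t,(T-\e)\wedge\tau_R]$, which follows once one observes that $\int_U|u(r,x+c(x,u))-u(r,x)|^2\,M(\d u)\le \|u\|_\infty^2\,(\|D_xu\|_{\infty,R}^2\wedge 1)\int_U(|c(x,u)|^2\wedge 4)\,M(\d u)$ is locally bounded by the continuity assumption on $x\mapsto\int_U|c(x,u)|^2\,\nu(\d u)$.
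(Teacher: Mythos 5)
Your proposal is correct and follows essentially the same route as the paper: apply It\^o's formula to the discounted process, use the Cauchy equation \eqref{eq-Cauchy-terminal} to identify the finite-variation part, invoke Lemma \ref{lem-nu-representation} and Theorem \ref{thm-no-explosion-sec-2} for nonexplosion, and conclude via the boundedness of $u$ and $g$ that the local-martingale term is a true martingale with zero expectation. You handle the double localization (in $R$ and in $T-\e$) more explicitly than the paper, which simply observes that the local martingale is bounded and hence a martingale; your added care is not a different method but a sharper bookkeeping of the same argument, and the one small blemish --- the displayed quadratic-variation inequality at the end does not hold literally (it should read $\int_U \min\bigl(4\|u\|_\infty^2,\,L_R^2|c(x,u)|^2\bigr)\,M(\d u)$ for a local gradient bound $L_R$, which is finite by the assumed integrability of $|c(x,u)|^2\wedge 1$) --- does not affect the conclusion it supports.
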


 \begin{proof}
 Thanks to Lemma \ref{lem-nu-representation}, we have \begin{equation}\label{eq-levy-sde-non-explosion-condition}  \int_{\R_{0}^{d}} |y|^{2} \nu(x,\d y)   = \int_{U} |c(x,u)|^{2} M(\d u). \end{equation} Putting this observation into  \eqref{eq-non-explosion-sect-5}, we see that the coefficients of \eqref{eq-X-sde-sect5}  satisfies Assumption \ref{assumption-non-linear-growth-sec-2}. Therefore  for any $(t,x)\in[0,\infty)\times \R^{d}$, Theorem \ref{thm-no-explosion-sec-2} implies  that the unique weak solution $X= X^{t,x}$ of \eqref{eq-X-sde-sect5} has no finite explosion time with probability 1.
We can then apply It\^o's formula to the process $e^{-\int_{t}^{s} \rho(r, X(r)) \d r} u(s, X(s)), s \in[t, T]$ and use the first equation of \eqref{eq-Cauchy-terminal} to see that
\begin{displaymath}
\xi(s; t,x) : = e^{-\int_{t}^{s} \rho(r, X(r)) \d r} u(s, X(s))- u(t, x) -\int_{t}^{s}  e^{-\int_{t}^{r} \rho(u, X(u)) \d u} g(r, X(r)) \d r, \ \  s\in[t,T]
\end{displaymath} is a local martingale. The boundedness assumptions on $u$ and $g$ in fact implies that $\xi$ is a bounded local martingale and hence a   martingale. In particular, we have $\E[\xi(T;t,x)] =0$, which, together with the terminal condition of  \eqref{eq-Cauchy-terminal}, leads to \eqref{eq-FK-stoch-soln-terminal}. This completes the proof.
\end{proof}

\begin{rem}
Note that in the traditional setting for Feynman-Kac formula, one typically imposes linear growth condition or boundedness condition on the coefficients $b,\sigma$ and $c$; see, for example, Theorem 5.7.6  of \cite{Karatzas-S} for the diffusion case and  Theorem 6.7.9 of \cite{APPLEBAUM} for the jump diffusion case. For our version of Feynman-Kac formula presented  in Theorem  \ref{thm-FK-terminal}, 
\eqref{eq-non-explosion-sect-5} allows the coefficients $b,\sigma$ and $c$ to grow super linearly. If  we also know that  $X$ has certain  moment estimates, say, $\E[\sup_{0\le s \le T} |X(s)|^{2}] < \infty$, then we can relax the boundedness assumption on $u, f, $ and $g$ to polynomial growth condition as in Theorem 3.2 of \citet*{ZhuYB-15}.
\end{rem}

\appendix
\section{Several Technical Proofs}\label{sect-appendix}

\begin{proof}[Proof of Theorem \ref{thm1-path-uniqueness-sec-2}] Thanks to the assumptions imposed on the function $\rho$, we can find a strictly decreasing sequence $\{a_{n}\}\subset (0, 1]$ with $a_{0} =1$, $\lim_{n\to\infty} a_{n} =0$ and $\int_{a_{n}} ^{a_{n-1}} \rho^{-1} (r) \d r = n$ for every $n \ge 1$. For each $n \ge 1$, there exists a continuous function $\rho_{n}$ on $\R$ with support in $(a_{n}, a_{n-1}) $ so that  $0 \le \rho_{n}(r) \le 2 n^{-1}\rho^{-1}(r) $ holds for every $r > 0$, and $\int_{a_{n}}^{a_{n-1}} \rho_{n}(r) \d r =1$.

Now consider the sequence of functions
\begin{equation}
\label{eq-psi-n-sec-2}
\psi_{n}(r) : = \int_{0}^{|r|}\int_{0}^{y} \rho_{n}(u) \d u \d y, \quad r\in \R, n \ge 1.
\end{equation}
We can immediately verify that $\psi_{n}$ is even and twice continuously differentiable, with $|\psi_{n}'(r)| \le 1$ and $\lim_{n\to\infty} \psi_{n}(r) = |r|$ for $r\in \R$. Furthermore, for each $r > 0$, the sequence $\{\psi_{n}(r) \}_{n\ge 1}$ is nondecreasing. Note also that for each $n\in \N$,  $\psi_{n}, \psi_{n}'$ and $\psi_{n}''$ all vanish  on the interval $(-a_{n}, a_{n})$.
By direct computations, we have for $0\neq x \in \R^{d}$
\begin{align*}
 D \psi_{n}(|x|) = \psi'_{n} (|x|) \frac{x}{|x|},\ \text{ and }\
 D^{2} \psi_{n}(|x|)  =\psi''_{n} (|x|) \frac{xx^{T}}{|x|^{2}} +  \psi'_{n} (|x|) \biggl[\frac{I}{|x|}- \frac{xx^{T}}{|x|^{3}}\biggr].
\end{align*}

Now suppose that $  X$ and $ \wdt X$ satisfy
\begin{align*}
 X(t)& =  x + \int_{0}^{t} b(X(s)) \d s + \int_{0}^{t}\sigma(X(s))\d W(s) + \int_{0}^{t} \int_{U} c(X(s-),u) \wdt N(\d s, \d u), \\
\wdt X(t)& = \wdt x + \int_{0}^{t} b( \wdt X(s)) \d s + \int_{0}^{t}\sigma( \wdt X(s))\d W(s) + \int_{0}^{t} \int_{U} c( \wdt X(s-),u) \wdt N(\d s, \d u),
\end{align*} for all $t \ge 0$, where $\wdt x, x \in \R^{d}$. Denote $\Delta _{t} : = \wdt X(t) - X(t)$ for $t \ge 0$. Assume $|\Delta_{0}| = |\wdt x-x| < \delta_{0} $ and define $$S_{\delta_{0}}: = \inf\{t \ge 0: |\Delta_{t}| \ge \delta_{0}\}= \inf\{t \ge 0: |\wdt X(t)- X(t)| \ge \delta_{0}\}.$$
 For $R >0$, let $\tau_{R}: =\inf\{t \ge 0: |\wdt X(t)| \vee |X(t)| > R \}$. By virtue of Theorem \ref{thm-no-explosion-sec-2}, we have $\tau_{R} \to \infty$ a.s. as $R\to \infty$.

Let us introduce the notations: \begin{align}
\label{eq-A-bar-notation}
 \lbar A(x,z)&: = \frac{|\lan x-z, \sigma(x) -\sigma(z)\ran|^{2}}{|x-z|^{2}}, \ \
 B(x,z): = \lan x-z, b(x)-b(z)\ran.
\end{align}
Applying It\^o's formula, we have
\begin{align}\label{eq-pathwise-uni-sec-2}
\nonumber \E&  [ \psi_{n} (|\Delta_{t\wedge \tau_{R}\wedge S_{\delta_{0}}} |)]
\\ &  = \psi_{n}(|\Delta _{0} |) + \E\biggl[\int_{0}^{t\wedge \tau_{R} \wedge S_{\delta_{0}}}I_{\{\Delta_{s} \neq0\}}\biggl[ \frac12\biggl( \psi_{n}''(|\Delta_{s}| ) - \frac{\psi_{n}'(|\Delta_{s}|)} { | \Delta_{s}| }   \biggr) \lbar A(\wdt X(s), X(s)) \\
\nonumber &\qquad \qquad\qquad+ \frac{\psi_{n}'(|\Delta_{s}|)} {2 | \Delta_{s}| } \bigl( 2B(\wdt X(s), X(s))  +  |\sigma(\wdt X(s)) - \sigma(X(s))|^{2} \bigr)\biggr] \d s \\
\nonumber  & \qquad \qquad\qquad+ \int_{0}^{t\wedge \tau_{R}\wedge S_{\delta_{0}}} \int_{U} \biggl[ \psi_{n}(|\Delta_{s} + c(\wdt X(s-), u) - c(X(s-),u)|) - \psi_{n}(|\Delta_{s}|) \\
\nonumber   &\qquad \qquad\qquad -I_{\{\Delta_{s} \neq0\}} \frac{\psi_{n}'(|\Delta_{s}|)}{|\Delta_{s}|} \lan \Delta_{s}, c(\wdt X(s-), u) - c(X(s-),u)\ran\biggr] \nu(\d u) \d s\biggr].
\end{align}
Recall that we have $0 \le \psi_{n}'(r) \le 1$ for each $r \ge 0$. Thus it follows from \eqref{eq-coeffs-non-lip-sec-2} that
 \begin{align}
\label{eq1-pathwise-uni-sec-2}
\nonumber \E&\biggl[\int_{0}^{t\wedge \tau_{R} \wedge S_{\delta_{0}}} I_{\{\Delta_{s} \neq0\}}\frac{\psi_{n}'(|\Delta_{s}|)} {2 | \Delta_{s}| } \bigl( 2B(\wdt X(s), X(s))  +  |\sigma(\wdt X(s)) - \sigma(X(s))|^{2} \bigr)  \d s  \biggr]  \\
\nonumber  & \le  \E\biggl[\int_{0}^{t\wedge \tau_{R} \wedge S_{\delta_{0}}} I_{\{\Delta_{s} \neq0\}}\frac{\psi_{n}'(|\Delta_{s}|)} {2 | \Delta_{s}| }  { \kappa_{R}}|\Delta_{s}| \rho(|\Delta_{s}|)\d s\biggr]\\
 & \le \E\biggl[\int_{0}^{t\wedge \tau_{R} \wedge S_{\delta_{0}}}  \frac{{ \kappa_{R}} }{2}\rho(|\Delta_{s}|)\d s  \biggr] = \frac{{ \kappa_{R}} }{2} \E\biggl[\int_{0}^{t\wedge \tau_{R}\wedge S_{\delta_{0}}}  \rho(|\Delta_{s}|)\d s  \biggr].
\end{align}   
On the other hand, thanks to the construction of $\psi_{n}$, we have for all $r\ge 0$,
$\psi_{n}''(r) = \rho_{n}(r) \le \frac{2}{n\rho(r)} I_{(a_{n},a_{n-1})}(r).$
Then it follows from \eqref{eq-coeffs-non-lip-sec-2} that
\begin{align}
\label{eq2-pathwise-uni-sec}
\nonumber \E&\biggl[\int_{0}^{t\wedge \tau_{R}\wedge S_{\delta_{0}}}  \frac12I_{\{\Delta_{s} \neq0\}}\biggl( \psi_{n}''(|\Delta_{s}| ) - \frac{\psi_{n}'(|\Delta_{s}|)} { | \Delta_{s}| }   \biggr) \lbar A(\wdt X(s), X(s)) \d s\biggr] \\
\nonumber  &\le \frac12 \E\biggl[\int_{0}^{t\wedge \tau_{R}\wedge S_{\delta_{0}}}I_{\{\Delta_{s} \neq0\}} \psi_{n}''(|\Delta_{s}| ) \lbar A(\wdt X(s), X(s)) \d s\biggr] \\
\nonumber &  \le  \frac12 \E\biggl[\int_{0}^{t\wedge \tau_{R}\wedge S_{\delta_{0}}}  \frac{2}{n\rho(|\Delta_{s}|)} I_{(a_{n},a_{n-1})} (|\Delta_{s}|) \frac{|\Delta_{s}|^{2} |\sigma(\wdt X(s))- \sigma(X(s))|^{2}}{|\Delta_{s}|^{2}} \d s\biggr]  \\
 & =   \E\biggl[\int_{0}^{t\wedge \tau_{R}\wedge S_{\delta_{0}}}  \frac{{ \kappa_{R}} }{n\rho(|\Delta_{s}|)} I_{(a_{n},a_{n-1})} (|\Delta_{s}|) |\Delta_{s}| \rho(|\Delta_{s}|) \d s\biggr]
 \le  \frac{ { \kappa_{R}}  t a_{n-1}}{n}.      
\end{align}
Using \eqref{eq-integral-term-non-lip-sec-2} and the fact that $|\psi_{n}'(r)| \le 1$, we can compute
  \begin{align}
\label{eq3-pathwise-uni-sec-2}
\nonumber \E&\biggl[ \int_{0}^{t\wedge \tau_{R}\wedge S_{\delta_{0}}} \int_{U} \biggl( \psi_{n}(|\Delta_{s} + c(\wdt X(s-), u) - c(X(s-),u)|) - \psi_{n}(|\Delta_{s}|)  \\
\nonumber & \qquad \qquad  \quad
 -I_{\{\Delta_{s} \neq0\}} \frac{\psi_{n}'(|\Delta_{s}|)}{|\Delta_{s}|} \lan \Delta_{s}, c(\wdt X(s-), u) - c(X(s-),u)\ran\biggr) \nu(\d u) \d s\biggr]\\
\nonumber    &  \le 2 \E \biggl[ \int_{0}^{t\wedge \tau_{R}\wedge S_{\delta_{0}}} \int_{U}  |c(\wdt X(s-), u) - c(X(s-),u)| \nu(\d u) \d s \biggr] \\
    &  \le 2 { \kappa_{R}}  \E \biggl[ \int_{0}^{t\wedge \tau_{R}\wedge S_{\delta_{0}}} \rho(|\Delta_{s}|) \d s\biggr].
\end{align} Plugging  \eqref{eq1-pathwise-uni-sec-2}--\eqref{eq3-pathwise-uni-sec-2} into \eqref{eq-pathwise-uni-sec-2}, we obtain
\begin{align*}
\E [ \psi_{n} (|\Delta_{t\wedge \tau_{R}\wedge S_{\delta_{0}}} |)] \le  \psi_{n}(|\Delta _{0} |) +   \frac{{ \kappa_{R}} t a_{n-1}}{n} + 
 \frac{5 { \kappa_{R}} }{2}\E\biggl[\int_{0}^{t\wedge\tau_{R}\wedge S_{\delta_{0}}} \rho(|\Delta_{s}|) \d s\biggr].
\end{align*}
Letting $n\to \infty$ yields \begin{align*}
\E [ |\Delta_{t\wedge \tau_{R}\wedge S_{\delta_{0}}} |] & \le |\Delta _{0} | +    \frac{5 { \kappa_{R}} }{2}\E\biggl[\int_{0}^{t\wedge\tau_{R}\wedge S_{\delta_{0}}} \rho(|\Delta_{s}|) \d s\biggr]   \le  |\Delta _{0} | +    \frac{5 { \kappa_{R}} }{2}\E\biggl[\int_{0}^{t} \rho(|\Delta_{s\wedge \tau_{R} \wedge S_{\delta_{0}}}|) \d s\biggr] \\
& \le  |\Delta _{0} | +    \frac{5 { \kappa_{R}} }{2} \int_{0}^{t}  \rho(\E[|\Delta_{s\wedge \tau_{R} \wedge S_{\delta_{0}}}|]) \d s,
\end{align*}
where we used the concavity of $\rho$ and Jensen's inequality to derive the last inequality. Let $u(t): = \E[ |\Delta_{t\wedge \tau_{R}\wedge S_{\delta_{0}}}|]$. Then $u$ satisfies $$0 \le u(t) \le v(t): = |\Delta _{0} | +    \frac{5\kappa_{R}}{2}  \int_{0}^{t} \rho(u(s)) \d s.$$

 Define $G(r):= \int_{1}^{r} \frac{\d s}{\rho(s)}$ for $r > 0$. Then   $G$ is nondecreasing and satisfies $G(r) > -\infty$ for $r > 0$ and $\lim_{r\downarrow 0} G(r) = -\infty$ thanks to \eqref{eq-rho-sec-2}. In addition, we have
 \begin{align*}
G(u(t)) & \le       G(v(t)) = G(|\Delta _{0} |) + \int_{0}^{t} G'(v(s)) v'(s) \d s \\
         & = G(|\Delta _{0} |) + \frac{5\kappa_{R}}{2}  \int_{0}^{t}  \frac{  \rho(u(s))}{\rho(v(s))}\d s
      \le G(|\Delta _{0} |) + \frac{5\kappa_{R}}{2}  t,
\end{align*} where the last inequality follows from the assumption that $\rho $ is nondecreasing. Now sending $|\Delta _{0} |= |\wdt x-x| \to 0$, we see that the right-hand side of the above inequality converges to $-\infty$ and so does the left-hand side. Hence\begin{equation}
\label{eq-u(t) to 0}
\lim_{|\wdt x - x| \to 0} u(t)=\lim_{|\wdt x - x| \to 0}\E[ |\Delta_{t\wedge \tau_{R}\wedge S_{\delta_{0}}}|]=0.
\end{equation}
In particular, when $\wdt x = x$, we have
  $\E [ |\Delta_{t\wedge \tau_{R}\wedge S_{\delta_{0}} } |] =0$. Recall that $\lim_{R\to \infty}\tau_{R}= \infty$ a.s.  Thus by Fatou's lemma,   we have $0 \le \E[ |\Delta_{t\wedge  S_{\delta_{0}} } |] \le \lim_{R\to\infty} \E[ |\Delta_{t\wedge \tau_{R}\wedge S_{\delta_{0}} } |] =0$. This gives $\E[ |\Delta_{t\wedge  S_{\delta_{0}} } |] =0$ and therefore $\Delta_{t\wedge S_{\delta_{0}} } =0$ a.s.

  On the set $\{S_{\delta_{0}} \le t\}$, we have  $|\Delta_{t\wedge S_{\delta_{0}} } | \ge \delta_{0}$. Thus it follows that $0 =\E [ |\Delta_{t\wedge S_{\delta_{0}} } |] \ge \delta_{0} \P\{ S_{\delta_{0}} \le t\}$. Then, we have $ \P\{ S_{\delta_{0}} \le t\} =0$ and hence $\Delta_{t} =0$ a.s.
 The desired pathwise uniqueness result then follows from the fact that $\wdt X$ and $X$ have right continuous sample paths.
\end{proof}

\begin{proof}[Proof of Theorem \ref{thm-path-uniqueness-sec-2}] Let $X(t),\wdt X(t), \Delta_{t}$, $S_{\delta_{0}}$,  and $\tau_{R}$ be defined as in the proof of Theorem \ref{thm1-path-uniqueness-sec-2}.
Consider the function $H(r): = \frac{r^{2}}{1+ r^{2}}$, $r\in \R$. We have $
H'(r) = \frac{2 r}{(1+r^{2})^{2} }$ 
and $ H''(r) = \frac{2 }{(1+r^{2})^{2} }  - \frac{8r^{2} }{(1+r^{2})^{3} }.$
Note that $H, H'$ and $H''$ are uniformly bounded.
By direct computations, we have for all $ x \in \R^{d}$
\begin{align*}
D H (|x|) &  
= \frac{2x}{(1+|x|^{2})^{2}},\ \text{ and }\
   D^{2} H(|x|)  
    =  \frac{2I}{(1+|x|^{2})^{2}} - \frac{8xx^{T}}{(1+|x|^{2})^{3}}.
\end{align*}
Applying It\^o's formula to the process $H(|\Delta_{\cdot\wedge \tau_{R} \wedge S_{\delta_{0}}}|)$, we have
\begin{align}\label{eq2-pathwise-uni-sec-2}
\nonumber  &\E [ H  (|\Delta_{t\wedge \tau_{R} \wedge S_{\delta_{0}}} |)] \\
\nonumber  & \ = H (|\Delta _{0} |) + \E\biggl[\int_{0}^{t\wedge \tau_{R} \wedge S_{\delta_{0}} } \biggl[ \frac{\lan 2\Delta_{s}, b(\wdt X(s))-b(X(s))\ran}{(1+|\Delta_{s}|^{2})^{2}}   \\
\nonumber   & \ \ + \frac12\tr\biggl((\sigma( \wdt X(s))- \sigma(X(s)))(\sigma( \wdt X(s))- \sigma(X(s)))^{T}\biggl (\frac{2I}{(1+|\Delta_{s}|^{2})^{2}} - \frac{8\Delta_{s} \Delta_{s}^{T}}{(1+|\Delta_{s}|^{2})^{3}}\biggr)\biggr) \biggr] \d s \\
\nonumber  & \ \ + \int_{0}^{t\wedge \tau_{R} \wedge S_{\delta_{0}}} \int_{U} \biggl[ H (|\Delta_{s} + c(\wdt X(s-), u) - c(X(s-),u)|) - H (|\Delta_{s}|) \\
\nonumber   & \qquad \quad\quad - \frac{2}{(1+|\Delta_{s}|^{2})^{2}} \lan \Delta_{s}, c(\wdt X(s-), u) - c(X(s-),u)\ran\biggr] \nu(\d u) \d s\biggr]\\
  &\ \le H (|\Delta _{0} |) + \E\biggl[\int_{0}^{t\wedge \tau_{R} \wedge S_{\delta_{0}}}  \frac{2\lan \Delta_{s}, b(\wdt X(s))-b(X(s))\ran  +  |\sigma(\wdt X(s)) - \sigma(X(s))|^{2}  }{(1+|\Delta_{s}|^{2})^{2}}\\
\nonumber   & \ \ + \int_{0}^{t\wedge \tau_{R} \wedge S_{\delta_{0}}} \int_{U} \biggl( H (|\Delta_{s} + c(\wdt X(s-), u) - c(X(s-),u)|) - H (|\Delta_{s}|) \\
\nonumber   & \qquad \quad\quad - \frac{2}{(1+|\Delta_{s}|^{2})^{2}} \lan \Delta_{s}, c(\wdt X(s-), u) - c(X(s-),u)\ran\biggr)\nu(\d u) \d s\biggr].
\end{align}

To simplify notations, for any $x,z\in \R^{d}$ and $u\in U$, let us denote $w: = w(x,z,u) = c(x, u) - c(z,u)$.
  Then
\begin{align*}
 H&  (|x+c(x,u)-z-c(z,u)|) - H (|x-z|) - \frac{2}{(1+|x-z|^{2})^{2}}  \lan x-z, c(x,u) - c(z,u)\ran          \\
      &   =  H(|x-z+w|) - H (|x-z|)   - \frac{H '(|x-z|)}{|x-z|}  \lan x-z, w\ran\\
    & = \frac{|x-z+w|^{2}}{1+|x-z+w|^{2}}- \frac{|x-z|^{2}}{1+|x-z|^{2}} - \frac{2\lan x-z, w\ran}{(1+|x-z|^{2})^{2}}\\
    & = \frac{|x-z+w|^{2}-|x-z|^{2}}{(1+|x-z+w|^{2})(1+|x-z|^{2})}- \frac{|x-z+w|^{2}-|x-z|^{2}}{ (1+|x-z|^{2})^{2}}\\
    & \quad + \frac{|x-z+w|^{2}-|x-z|^{2}}{ (1+|x-z|^{2})^{2}}- \frac{2\lan x-z, w\ran}{(1+|x-z|^{2})^{2}}\\
    & = \frac{|x-z+w|^{2}-|x-z|^{2}}{ 1+|x-z|^{2}}\biggl[\frac{1}{1+|x-z+w|^{2}} -  \frac{1}{1+|x-z|^{2}}\biggr] + \frac{|w|^{2}}{(1+|x-z|^{2})^{2}}\\
    & \le  \frac{|w|^{2}}{(1+|x-z|^{2})^{2}}. 
\end{align*}
Then we have
\begin{align*} 
 \int_{U} & \biggl( H (|\Delta_{s} + c(\wdt X(s-), u) - c(X(s-),u)|) - H (|\Delta_{s}|)  \\
&\pushright{   - \frac{2\lan \Delta_{s}, c(\wdt X(s-), u) - c(X(s-),u)\ran}{(1+|\Delta_{s}|^{2})^{2}} \biggr)\nu(\d u) }
\\ &   \le   \int_{U}   \frac{| c(\wdt X(s-), u) - c(X(s-),u) |^{2} 
  }{(1+|\Delta_{s}|^{2})^{2}}\nu(\d u).
 \end{align*} Using this estimate in \eqref{eq2-pathwise-uni-sec-2}, we obtain
  \begin{align*}
  \E& [ H  (|\Delta_{t\wedge \tau_{R} \wedge S_{\delta_{0}}} |)] -  H (|\Delta _{0} |) \\
  & \le  \E\biggl[\int_{0}^{t\wedge \tau_{R} \wedge S_{\delta_{0}}}   \frac{1}{(1+|\Delta_{s}|^{2})^{2}}  \biggl(  2 \lan \Delta_{s}, b(\wdt X(s))-b(X(s))\ran  +  |\sigma(\wdt X(s)) - \sigma(X(s))|^{2}   \\ &\pushright{ \hfill
 +  \int_{U}   | c(\wdt X(s-), u) - c(X(s-),u) |^{2} 
  \nu(\d u)  \biggr)  \d s\biggr].}
\end{align*}
Then, thanks to \eqref{eq-Feller-condition} and the first condition of \eqref{eq-varrho-conditions-Feller}, it follows that \begin{align*}
  \E[ H  (|\Delta_{t\wedge \tau_{R}\wedge S_{\delta_{0}}} |)] &\le  H (|\Delta _{0} |)  +\E\biggl[\int_{0}^{t\wedge \tau_{R}\wedge S_{\delta_{0}}}   \frac{{ \kappa_{R}} \varrho(|\Delta_{s}|^{2})}{(1+|\Delta_{s}|^{2})^{2}} \d s\biggr] \\
   &\le  H (|\Delta _{0} |)  +{ \kappa_{R}}\E\biggl[\int_{0}^{t\wedge \tau_{R}\wedge S_{\delta_{0}}}\varrho\biggl(\frac{|\Delta_{s}|^{2}}{ 1+|\Delta_{s}|^{2} }\biggr)\d s\biggr] \\
  & \le  H (|\Delta _{0} |)  +{ \kappa_{R}}\E\biggl[ \int_{0}^{t} \varrho( H(|\Delta_{s\wedge \tau_{R}\wedge S_{\delta_{0}}}|))\d s\biggr] \\
  & \le H (|\Delta _{0} |)  +{ \kappa_{R}}  \int_{0}^{t} \varrho(\E[ H(|\Delta_{s\wedge \tau_{R}\wedge S_{\delta_{0}}}|)])\d s.
  \end{align*}
   where we used the concavity of $\rho$ and Jensen's inequality to derive the last inequality.
When $\wdt x = x$ or $\Delta_{0} =0$, the 
same argument as that in the end of the proof of Theorem \ref{thm1-path-uniqueness-sec-2} reveals that $\E [H( |\Delta_{t \wedge\tau_{R}\wedge S_{\delta_{0}}} |)] =0$.  Since $\lim_{R\to \infty}\tau_{R}= \infty$ a.s. and $0 \le H(r) \le 1$ for all $r\ge 0$,
the bounded convergence theorem further implies that  $\E [H( |\Delta_{t  \wedge S_{\delta_{0}}} |)] =0$.

On the set $\{S_{\delta_{0}} < t\}$, $|\Delta_{S_{\delta_{0}}}| \ge \delta_{0}$. Since $H$ is increasing on $(0, \infty)$ and bounded above by $1$, it follows that $0 < H(\delta_{0}) \le H(|\Delta_{S_{\delta_{0}}}|) \le 1$ and hence $$H(\delta_{0})\P\{S_{\delta_{0}} < t\} \le \E[H( |\Delta_{t \wedge S_{\delta_{0}}} |) I_{\{S_{\delta_{0}} < t \}}] \le \E [H( |\Delta_{t \wedge S_{\delta_{0}}} |)] =0 .$$ Therefore it follows that $\P\{S_{\delta_{0}} < t\} =0$. Then $0 \le \E [H(|\Delta_{S_{\delta_{0}}}|)I_{\{  S_{\delta_{0}}< t\}} ]  \le   \E [1\cdot I_{\{  S_{\delta_{0}}< t\}} ] =0 $ and thus
\begin{align*}
 0=    \E\big[H( |\Delta_{t \wedge S_{\delta_{0}}} |)\big]  = \E\big[H(|\Delta_{t}|)I_{\{ t \le S_{\delta_{0}}\}}\big] +   \E\big[H(|\Delta_{S_{\delta_{0}}}|)I_{\{  S_{\delta_{0}}< t\}}\big] =   \E\big[H(|\Delta_{t}|)I_{\{ t \le S_{\delta_{0}}\}}\big].
\end{align*} Next we observe that
\begin{align*}
 \big|\E[H(|\Delta_{t}|)] -\E [H( |\Delta_{t \wedge S_{\delta_{0}}} |) ]  \big| &=  \big|\E[H(|\Delta_{t}|)] - \E [H(|\Delta_{t}|)I_{\{ t \le S_{\delta_{0}}\}} ] \big| \\ & =  \big|\E[H(|\Delta_{t}|)I_{\{  S_{\delta_{0}} < t\}} ]  \big| \le \P\{  S_{\delta_{0}} < t\} =0.
  \end{align*} Hence it holds that $\E[H(|\Delta_{t}|)] =0$  and hence $\Delta_{t}=0$ a.s.
As observed in the end of the proof of Theorem \ref{thm1-path-uniqueness-sec-2}, this  gives  the desired pathwise uniqueness result.
\end{proof}

\begin{proof}[Proof of Lemma \ref{lem-wdt-LL-F estimate}]
We have $F'(r) =  \frac{1}{(1+r)^{2}}$ and $F''(r) = -\frac{2}{(1+r)^{3}}$.
Recall the notations $\lbar A(x,z)$ and $B(x,z)$ defined in \eqref{eq-A-bar-notation}.
Then as in the proof of Theorem 3.1 in \cite{ChenLi-89}, straightforward calculations lead to
\begin{align}
\nonumber  \wdt \Omega_{\text{diffusion}}F(|x-z|)   & =  \frac{ F''(|x-z|)}{2}\overline A (x,z)
+ \frac{ F'(|x-z|) }{2|x-z|} \big[  |\sigma(x) - \sigma(z)|^{2}-\overline A(x,z) + 2  B(x,z)\big]\\
 \label{eq-Omega diffusion estimate}& \le \frac{ |\sigma(x) - \sigma(z)|^{2}  + 2  B(x,z)}{ 2 |x-z| (1+ |x-z|)^{2}}.
\end{align}
 Following the same arguments as those in the proof of Proposition 3.1  in \cite{Wang-10}, we can verify that
\begin{align}
\nonumber F&(|x+c(x,u)-z-c(z,u)|) - F(|x-z|)
    \\ \label{eq1-F estimate}&\quad - \frac{F'(|x-z|)}{|x-z|}  \lan x-z, c(x,u) - c(z,u)\ran \le \frac{|c(x,u)-c(z,u)|^{2}}{2 |x-z|(1+|x-z|)^{2}}. \end{align}
On the other hand, since the function $F$ is concave, it follows that $F(r) - F(r_{0}) \le F'(r_{0}) (r-r_{0})$ for all $r, r_{0}\ge 0$. Using this inequality with $r_{0} = |x-z|$ and $r = |x+c(x,u)-z-c(z,u)|$, and noting that $F'(r_{0}) > 0$, we can compute
\begin{align}
\nonumber F&(|x+c(x,u)-z-c(z,u)|) - F(|x-z|) - \frac{F'(|x-z|)}{|x-z|}  \lan x-z, c(x,u) - c(z,u)\ran          \\
 \nonumber   &  \le  F'(|x-z|) (|x+c(x,u)-z-c(z,u)|-|x-z| )- \frac{F'(|x-z|)}{|x-z|}  \lan x-z, c(x,u) - c(z,u)\ran \\
 \nonumber & \le  F'(|x-z|) | c(x,u) - c(z,u)| + \frac{F'(|x-z|)}{|x-z|}| x-z| \cdot| c(x,u) - c(z,u)| \\
 \nonumber & = 2 F'(|x-z|) | c(x,u) - c(z,u)| \\
  \label{eq2-F estimate}  & = \frac{2 |x-z|  |c(x,u) -c(z,u)|} {|x-z|(1+ |x-z|)^{2}}. 
\end{align} 
Combining \eqref{eq1-F estimate} and  \eqref{eq2-F estimate} yields \begin{align}
\nonumber F&(|x+c(x,u)-z-c(z,u)|) - F(|x-z|) - \frac{F'(|x-z|)}{|x-z|}  \lan x-z, c(x,u) - c(z,u)\ran \\ \label{eq-F estimate} &    \pushright{\le \frac{1}{2|x-z|(1+ |x-z|)^{2}} \bigl[|c(x,u)-c(z,u)|^{2} \wedge (4 |x-z| |c(x,u) -c(z,u)|)\bigr].}
\end{align}
Using \eqref{eq-F estimate} in $\wdt {\Omega}_{\text{jump}} $ of \eqref{eq-Omega-j-defn},  it follows that for all $x\neq z$,
\begin{align}
   \nonumber&  \wdt {\Omega}_{\text{jump}}  F(|x-z|) \\
\nonumber &   = \int_{U} \biggl[F (|x+c(x,u)-z-c(z,u)|) - F(|x-z|) \\
 \nonumber&   \qquad \qquad - \frac{F'(|x-z|)}{|x-z|}  \lan x-z, c(x,u) - c(z,u)\ran\biggr]  \nu(\d u) \\
   \label{eq-Omega jump F estimate} &   \le \frac{1}{2|x-z|(1+ |x-z|)^{2}}  \int_{U}\bigl[|c(x,u)-c(z,u)|^{2} \wedge (4 |x-z| |c(x,u) -c(z,u)|)\bigr]\nu(\d u).
\end{align}
Combining \eqref{eq-Omega diffusion estimate}  and \eqref{eq-Omega jump F estimate}, and using condition \eqref{eq-feller-new condition}, we obtain $$\wdt \LL F(|x-z|) \le  \frac{\kappa_{R} \varrho (|x-z|) }{ (1+ |x-z|)^{2}}  \le \kappa_{R} \varrho \biggl(\frac{|x-z|}{1+|x-z|}\biggr)= \kappa_{R}\varrho (F(|x-z|)), $$
for all $x,z\in \R^{d}$ with $|x| \vee |z| \le R $ and $0 < |x-z| \le \delta_{0}$, where we used \eqref{eq-varrho-conditions-Feller} to derive the second inequality above.
This establishes  \eqref{eq-wdt-LL-F estimate}  and hence completes the proof of the lemma.
\end{proof}

\begin{proof}[Proof of Lemma \ref{lem-irreducibility}]  Let us fix $T > 0, r > 0$ and $x,a\in \R^{d}$. We need to show that $P_{t}(x, B(a,r)) : = \P\{|X^{x}(T) -a | \le r\} > 0$, or equivalently, $ \P\{|X^{x}(T) -a | > r\} < 1$. To this end, we choose $t_{0} \in (0, T)$, whose exact value will be specified later. Set for $n \in \N$, $X^{n}(t_{0}) : = X(t_{0})  I_{\{ |X(t_{0})| \le n\}}$. Then we have \begin{equation}
\label{eq1-irreducibility-proof}
\lim_{n\to \infty} \E[H(|X(t_{0}) -X^{n}(t_{0})|)] =0,
\end{equation} where the function $H(r)=\frac{r^{2}}{1+ r^{2}}, r\ge 0$ is defined in the proof of Theorem \ref{thm-path-uniqueness-sec-2}.

For $t \in [t_{0}, T]$, we define $$ J^{n}(t) : = \frac{T-t}{T-t_{0}} X^{n}(t_{0}) + \frac{t-t_{0}}{T-t_{0}} a, \text{ and }h^{n}(t): = \frac{a-X^{n}(t_{0})}{T-t_{0}} - b(J^{n}(t)). $$ Then $J^{n}(t_{0}) =X^{n}(t_{0}), J^{n}(T) =a$, and $J^{n}$ satisfies the following SDE:
\begin{displaymath}
J^{n}(t)  = X^{n}(t_{0}) + \int_{t_{0}}^{t} b(J^{n}(s) )\d s +  \int_{t_{0}}^{t} h^{n}(s) \d s, \qquad t \in  [t_{0}, T].
\end{displaymath} Let us also consider the SDE
\begin{displaymath}
Y(t) : = X(t_{0})  + \int_{t_{0}}^{t} [b(Y(s)) +   h^{n}(s)] \d s +  \int_{t_{0}}^{t}\sigma(Y(s))\d W(s) + \int_{t_{0}}^{t}\int_{U} c(X(s-),u)\wdt N(\d s \d u),
\end{displaymath} for $t \in [t_{0}, T]$. Also let $Y(t) : = X(t)$ for $t \in [0, t_{0}]$. Denote $\Delta_{t}: = Y(t) - J^{n}(t)$ for $t\in [t_{0}, T]$. Note that $\Delta_{t_{0}} = X(t_{0}) -  X^{n}(t_{0})$ and $\Delta_{T} = Y(T) -a$.

Define $\tau_{R}: = \inf\{t \ge t_{0}: |Y(t)|\vee| J^{n}(t)| > R \} \wedge T$ and $S_{\delta_{0}}: = \inf\{t \ge t_{0}: |Y(t) - J^{n}(t)| \ge \delta_{0}\}\wedge T$. Then detailed calculations as those in the proof of Theorem \ref{thm-path-uniqueness-sec-2} reveal that
\begin{align*}
 \E& [H(| \Delta_{T\wedge \tau_{R}\wedge S_{\delta_{0}}}|)]    - \E[H( |\Delta_{t_{0}}  |)] \\&=      \E\biggl[\int_{t_{0}}^{T\wedge \tau_{R}\wedge S_{\delta_{0}}} \frac{2 \lan \Delta_{s}, b(Y(s))- b(J^{n}(s))\ran + |\sigma(Y(s))|^{2}-4|\lan\sigma(Y(s)), \Delta_{s}\ran|^{2}}{(1+ |\Delta_{s}|^{2})^{2}} \d s\biggr]    \\
    &   \ \ + \E\biggl[\int_{t_{0}}^{T\wedge \tau_{R}\wedge S_{\delta_{0}}} \!\!\int_{U} \biggl(  H(|\Delta_{s} + c(Y(s-),u)|) - H(|\Delta_{s} |)- \frac{2\lan \Delta_{s}, c(Y(s-),u)\ran}{(1+ |\Delta_{s}|^{2})^{2}}\biggr)\nu(\d u) \d s\biggr] \\
    & \le   K_{R} \E\biggl[\int_{t_{0}}^{T\wedge \tau_{R}\wedge S_{\delta_{0}}} \biggl( \varrho(H(|\Delta_{s}|)) + |\sigma(Y(s))|^{2} + \int_{U} |c(Y(s-),u)|^{2} \nu(\d u)\biggr)\d s\biggr]\\
    & \le K_{R}\E\biggl[\int_{t_{0}}^{T\wedge \tau_{R}\wedge S_{\delta_{0}}}  \bigl( \varrho(H(|\Delta_{s}|)) + 1+ |Y(s)|^{2} \bigr) \d s\biggr]\\
    & \le K_{R}\E\biggl[\int_{t_{0}}^{T}  \bigl( \varrho(H(|\Delta_{s\wedge \tau_{R}\wedge S_{\delta_{0}}}|)) + 1+ |Y(s\wedge \tau_{R}\wedge S_{\delta_{0}})|^{2} \bigr) \d s\biggr],
\end{align*} where the second  last inequality follows from the linear growth condition given by Assumption \ref{assumption-non-linear-growth-sec-2}, and $K_{R}$  is a positive constant. Also, throughout the proof, $K_{R}$ is generic positive constant whose exact value may change from line to line.
 Furthermore, by virtue of \cite{ZhuYB-15}, we have $\E[\sup_{t \in [0, T]} |Y(t)|^{2}] \le K $, where $K$ is a positive constant independent of $t_{0}$ and $R$. Thus we have \begin{align*}
 \E & [H(|\Delta_{T\wedge  S_{\delta_{0}}\wedge \tau_{R}} |)]   \le  \E[H(| \Delta_{t_{0}} |)] + K_{R} (T-t_{0})  + K_{R} \int_{t_{0}}^{T }   \varrho(\E[H(|\Delta_{s\wedge S_{\delta_{0}}\wedge \tau_{R}}|)]) \d s.
\end{align*} Note that we also used Jensen's inequality to obtain the above inequality. Consequently as in the proof of Theorem \ref{thm-path-uniqueness-sec-2}, we have\begin{equation}\label{eq2-irreducibility-proof}
 \E[H(|\Delta_{T\wedge \tau_{R} \wedge S_{\delta_{0}}} |)]   \le  G^{-1} \bigl(G( \E[H(| \Delta_{t_{0}} |)] + K_{R} (T-t_{0})) + K_{R} (T-t_{0})\bigr),
\end{equation}where $G(r) : = \int_{1}^{r}\frac{\d \xi}{\varrho(\xi)}$ and $G^{-1}$ is the (left) inverse function of $G$: $G^{-1}(x) : = \inf\{y \ge 0: G(y) \ge x \}$, $x\in \R$.

Next we observe that for the positive constant $ \frac{1}{H(\delta_{0})} = 1+ \frac{1 }{\delta_{0}^{2}}  $, we have   \begin{equation}
\label{eq3-irreducibility-proof}
\E[H(|\Delta_{T}|)] \le  \frac{1}{H(\delta_{0})}  \E[H(|\Delta_{T\wedge  S_{\delta_{0}}} |)].
\end{equation}To see this, we notice that on the set $\{ S_{\delta_{0} }< T\wedge \tau_{R} \}$, we have $|\Delta_{T\wedge\tau_{R}\wedge S_{\delta_{0}}}| \ge \delta_{0}$ and hence $H(\delta_{0}) \le H(|\Delta_{T\wedge \tau_{R}\wedge  S_{\delta_{0}}} |)$ since $H$ is increasing. Therefore,
\begin{align*}\E[H(|\Delta_{T\wedge \tau_{R}\wedge  S_{\delta_{0}}} |)]& = \E[H(|\Delta_{T\wedge \tau_{R}}|)I_{\{T\wedge \tau_{R} \le S_{\delta_{0}} \}}] + \E[H(|\Delta_{S_{\delta_{0}}}|)I_{\{   S_{\delta_{0}} < T\wedge \tau_{R} \}}] \\ &\ge \E[H(|\Delta_{T\wedge \tau_{R}}|)I_{\{T\wedge \tau_{R} \le S_{\delta_{0}} \}}] + H(\delta_{0}) \P\{ S_{\delta_{0}} < T\wedge \tau_{R} \}. \end{align*}
Then it follows that \begin{align*}
  & \frac{ \E[H(|\Delta_{T\wedge\tau_{R} \wedge  S_{\delta_{0}}} |)] }{H(\delta_{0})} -  \E[H(|\Delta_{T\wedge\tau_{R} }|)]\\
   & \quad \ge  \frac{\E[H(|\Delta_{T\wedge\tau_{R} }|)I_{\{T\wedge\tau_{R}  \le S_{\delta_{0}} \}}] + H(\delta_{0}) \P\{ S_{\delta_{0}} < T\wedge\tau_{R}  \} }{H(\delta_{0})}    - \E[H(|\Delta_{T\wedge\tau_{R} }|)]  \\
    &  \quad \ge   \P\{ S_{\delta_{0}} < T\wedge\tau_{R}  \} +\E[H(|\Delta_{T\wedge\tau_{R} }|)I_{\{T\wedge\tau_{R}  \le S_{\delta_{0}} \}}]  - \E[H(|\Delta_{T\wedge\tau_{R} }|)] \\
    & \quad= \P\{ S_{\delta_{0}} < T\wedge\tau_{R}  \} - \E[H(|\Delta_{T\wedge\tau_{R} }|)I_{\{  S_{\delta_{0}} <  T\wedge\tau_{R}  \}}] \\
    &\quad \ge   \P\{ S_{\delta_{0}} < T\wedge\tau_{R}  \}  -\E[ 1\cdot I_{\{  S_{\delta_{0}} <  T\wedge\tau_{R}  \}}]  =0.
\end{align*} Consequently $\E[H(|\Delta_{T\wedge\tau_{R} }|)] \le  \frac{ \E[H(|\Delta_{T\wedge\tau_{R} \wedge  S_{\delta_{0}}} |)] }{H(\delta_{0})} $ for each $R > 0$. Thanks to  Theorem \ref{thm-no-explosion-sec-2}, $\lim_{R\to\infty}\tau_{R}  = \infty$ a.s.  Also note that $H$ is uniformly bounded. Thus, by the bounded convergence theorem, passing to the limit as $R\to \infty$ establishes \eqref{eq3-irreducibility-proof}.

For any $\e > 0$, we can choose some $R_{0} > 0 $ sufficiently large so that $\P\{\tau_{R_{0}} \le T \wedge S_{\delta_{0}} \} \le \P\{ \tau_{R_{0}} \le T\} < \e$. Then we have from \eqref{eq2-irreducibility-proof} that \begin{align}\label{eq4-irreducibility-proof}
 \nonumber \E[H(|\Delta_{T\wedge  S_{\delta_{0}}} |)]   &  =  \E[H(|\Delta_{T\wedge  S_{\delta_{0}}} |)I_{\{T \wedge S_{\delta_{0}} \le \tau_{R_{0}} \}}]  +  \E[H(|\Delta_{T\wedge  S_{\delta_{0}}} |)I_{\{T \wedge S_{\delta_{0}} >  \tau_{R_{0}} \}}]   \\
  \nonumber  &   \le  \E[H(|\Delta_{T\wedge\tau_{R_{0}}\wedge  S_{\delta_{0}}} |)I_{\{T \wedge S_{\delta_{0}} \le \tau_{R_{0}} \}}] +  \P\{\tau_{R_{0}} \le T \wedge S_{\delta_{0}} \}\\
    & \le  G^{-1} \bigl(G( \E[H(| \Delta_{t_{0}} |)] + K_{R_{0}} (T-t_{0})) + K_{R_{0}} (T-t_{0})\bigr) + \e.
\end{align}

The rest of the proof is very similar to those in the proof of Proposition 2.4 of \cite{Qiao-14}. Note that $Y$ satisfies the SDE
\begin{displaymath}
Y(t) : = x + \int_{0}^{t} [b(Y(s)) +   h^{n}(s)I_{\{s> t_{0}\}}] \d s +  \int_{0}^{t}\sigma(Y(s))\d W(s) + \int_{0}^{t}\int_{U} c(X(s-),u)\wdt N(\d s \d u),
\end{displaymath} for $t \in [0,T]$. Put $\wdt H(t): =I_{\{t> t_{0}\}}\sigma^{-1}(Y(t)) h^{n}(t)$ and $$M(t):= \exp\biggl\{ \int_{0}^{t}\lan \wdt H(s), \d W(s)\ran - \frac12\int_{0}^{t} |\wdt H(s)|^{2} \d s\biggr\}, \quad t \in [0, T].$$ As observed in \cite{Qiao-14}, $M$ is an a.s. strictly positive  martingale under $\P$ with $\E[M(T)]=1$,   the measure $\Q$ defined by $\Q(A) = \E[M(T) I_{A}], A \in \F_{T}$ is probability measure equivalent to $\P$ on $\F_{T}$,    $\wdt W(t): = W(t) + \int_{0}^{t}\wdt  H(s)\d s$ is a $\Q$-Brownian motion, and $\wdt N(\d t, \d u)$ is a $\Q$-compensated Poisson random measure with compensator $\d t\nu(\d u)$. Moreover, under $\Q$, $Y$ solves the SDE
\begin{displaymath}
Y(t) : = x + \int_{0}^{t} b(Y(s))   \d s +  \int_{0}^{t}\sigma(Y(s))\d \wdt W(s) + \int_{0}^{t}\int_{U} c(X(s-),u)\wdt N(\d s \d u), \ \ t\in [0, T].
\end{displaymath} By the pathwise uniqueness result established in Theorem \ref{thm-path-uniqueness-sec-2}, it follows that $\P\{|X^{x}(T) -a | > r\} = \Q\{|Y(T) -a| > r\}$.  Furthermore, since $\P, \Q$ are equivalent, the desired assertion $\P\{|X^{x}(T) -a| >  r \} < 1$ will follow if we can show that $\P\{ |Y(T) -a| > r\} < 1$. To this end, we deduce as follows. Since the function $H$ is increasing, we can use \eqref{eq3-irreducibility-proof}  and \eqref{eq4-irreducibility-proof}  to derive
\begin{align*}
       \P  \{ |Y(T) -a| > r\}  \le \P\{H( |Y(T) -a| ) > H(r)\} \le \frac{\E[H( |Y(T) -a| ) ]}{H(r)}  =   \frac{\E[H(  |\Delta_{T}| ) ]}{H(r)}\\
       \le \frac{ \E[H(|\Delta_{T\wedge  S_{\delta_{0}}} |)]}{H(r) H(\delta_{0})} \le \frac{ G^{-1} \bigl(G( \E[H(| \Delta_{t_{0}} |)] + K_{R_{0}} (T-t_{0})) + K_{R_{0}}(T-t_{0})\bigr) + \e }{H(r) H(\delta_{0})}.
\end{align*} Finally, in view of \eqref{eq1-irreducibility-proof} and the asymptotic properties  of $G$ and $G^{-1}$,  we can make the value of the last fraction in the above equation arbitrarily small by choosing  $n$ sufficiently large and $t_{0}$ sufficiently close to $T$. This completes the proof.
 \end{proof}

\begin{proof}[Proof of Lemma \ref{lem-nu-representation}]
We give a constructive proof motivated by \cite{Kurtz-11}.  Since $\nu(x, \cdot)$ is a $\sigma$-finite measure on $\R^{d}_{0}$, we can find a measurable partition $\{A_{n} \}_{n=-\infty}^{\infty}$ of $\R^{d}_{0}$ such that $0< \nu(x, A_{n}) \le 1$ for each $n$.
Now let $$\nu_{n}(x, \cdot) : = \nu (x, \cdot\cap A_{n}), \text{ and }\mu_{n} (x,  \cdot) : =  \frac{\nu_{n}(x, \cdot)}{\nu_{n}(x, \R^{d}_{0})}, \quad n \in \mathbb Z.$$   Obviously  we have $\nu(x, \Gamma)= \sum_{n=-\infty}^{\infty}\nu_{n}(x, \Gamma)$ for each  $\Gamma\in \B(\R^{d}_{0})$.
Using the measurable selection theorem (see, e.g. \cite{KuraR-65} or \cite[Chapter 12]{Stroock-V}), we can  choose $\nu_{n}(x, \cdot)$ so that $\nu_{n}(\cdot,\Gamma)$ is measurable for each $n$ and $\Gamma\in \B(\R^{d}_{0})$.
 For any complete and separable metric space $E$, denoting by $\mathcal P(E)$ the set of probability measures on $E$,  there exists a Borel measurable function $h: \mathcal P(E) \times [0,1] \mapsto \R^{d}$ such that $h(\mu,Z) \stackrel{d}{=} \mu$, where $\mu \in \mathcal P(E)$ and $Z$ is uniformly distributed on $[0,1]$.

 Now define functions $\gamma: \R^{d} \times \R\mapsto \R^{d}$ and $\lambda: \R^{d} \times \R\mapsto \R$ by $$\gamma(x, \xi) : =\sum_{k=-\infty}^{\infty} h(\mu_{k} (x, \cdot), \xi)I_{[k, k+1)}(\xi), \text{ and } \lambda(x, \xi) : =\sum_{k=-\infty}^{\infty} \nu_{k}(x, \R^{d}_{0})I_{[k, k+1)}(\xi).$$ Then it follows that for any $\Gamma \in \B(\R^{d}_{0})$, we  have
 \begin{align*}
 \int_{\R}    \lambda(x, \xi) I_{\Gamma}(\gamma(x, \xi)) \d\xi
 & = \sum_{k=-\infty}^{\infty} \int_{k}^{k+1} \nu_{k}(x, \R^{d}_{0}) I_{\Gamma}(h(\mu_{k} (x, \cdot),\xi))\d \xi \\
 & =  \sum_{k=-\infty}^{\infty}\nu_{k}(x, \R^{d}_{0})  \int_{k}^{k+1} I_{\Gamma}(h(\mu_{k} (x, \cdot),\xi))\d \xi  \\
  & =  \sum_{k=-\infty}^{\infty}\nu_{k}(x, \R^{d}_{0}) \mu_{k} (x, \Gamma)
  =  \sum_{k=-\infty}^{\infty}\nu_{k}(x, \Gamma) = \nu(x, \Gamma).
\end{align*}


   Since $0 \notin \Gamma$, we can write
\begin{align*}
 \nu(x, \Gamma ) & = \int_{\R}   \lambda(x, \xi) I_{\Gamma}(\gamma(x, \xi))  \d \xi
   = \int_{\R} \int_{0}^{1} I_{[0,\lambda(x, \xi)]}(\eta) \d \eta \, I_{\Gamma}(\gamma(x, \xi))   \d \xi\\
  & =  \int_{\R\times [0,1]} I_{\Gamma}(\gamma(x, \xi) I_{[0,\lambda(x, \xi)]}(\eta) ) \d \eta \d \xi.
\end{align*}
This  gives \eqref{eq-nu-measure-representation} with  
$   c (x, u) = \gamma(x, \xi) I_{[0,\lambda(x, \xi)]}(\eta),$
 $(U,\mathfrak U)= (\R\times [0,1], \B(\R\times [0,1]))$,  and $M(\cdot) $ being  the Lebesgue measure on $\R\times [0,1]$.
 The lemma is therefore proved.
 \end{proof}

\section*{\bf\large Acknowledgements}
 We would like to thank the anonymous reviewer for pointing out an important  reference to us and for his/her useful comments.
 The research was supported in
 part by the National Natural Science Foundation of China under Grant No. 11671034, the Beijing Natural Science Foundation under Grant No. 1172001, and the Simons foundation collaboration grant 523736.



\end{document}